\newif\ifcolorcomments
\newcommand{\allowcomments}[4]{
\newcommand{#1}[1]{\ifdraft{\ifcolorcomments{\textcolor{#4}{##1 --#3}}\else{\textsl{ ##1 \ --#3}}\fi}\else{}\fi}
}
\allowcomments{\commumtaz}{MH}{Mumtaz}{green}
\allowcomments{\comwang}{BW}{BWWang}{blue}
\allowcomments{\comkle}{DK}{DK}{magenta}
\allowcomments{\comnick}{NW}{Nick}{red}
\newcommand {\ignore}[1] {}
\def\bc{\begin{center}}
\def\ec{\end{center}}
\def\be{\begin{equation}}
\def\ee{\end{equation}}
\def\N{\mathbb N}
\def\Z{\mathbb Z}
\def\Q{\mathbb Q}
\def\R{\mathbb R}
\def\A{\mathbb A}
\def\D{\mathcal E}
\def\H{\mathcal H}
\newcommand\hdim{\dim_{\mathrm H}}
\def\Z{\mathbb Z}
\def\U{\mathcal U}
\newtheorem{lem}{Lemma}[section]
\newtheorem{pro}[lem]{Proposition}
\newtheorem{thm}[lem]{Theorem}
\newtheorem{cor}[lem]{Corollary}
\newtheorem{rem}[lem]{Remark}
\numberwithin{equation}{section}
\newif\ifdraft\drafttrue
\newcommand*{\myDots}{\ifmmode\mathellipsis\else.\kern-0.07em.\kern-0.07em.\fi}
\DeclarePairedDelimiter\floor{\lfloor}{\rfloor}
\newcommand{\subscript}[2]{$#1 _ #2$}
\begin{document}

\subjclass[2010]  {11K50 (11J70, 11J83, 28A78, 28A80)}

\title[weighted products of multiple  partial quotients]{Metrical properties for the weighted products of \\ multiple  partial quotients in continued fractions}

\author[A. Bakhtawar]{Ayreena Bakhtawar}
\address{Ayreena Bakhtawar, School of Mathematics and Statistics, University of New South Wales, Sydney, NSW 2052, Australia}
\email{a.bakhtawar@unsw.edu.au}

\author[M. Hussain]{Mumtaz ~Hussain}
\address{Mumtaz Hussain,  Department of Mathematical and Physical Sciences,  La Trobe University, Bendigo 3552, Australia. }\email{m.hussain@latrobe.edu.au}
\author[D. Kleinbock]{Dmitry ~Kleinbock}
\address {Dmitry Kleinbock, Brandeis University, Waltham MA 02454-9110.} \email{kleinboc@brandeis.edu}
\author[B-W. Wang]{Bao-Wei Wang}
\address{Bao-wei Wang, School  of  Mathematics  and  Statistics,  Huazhong  University  of Science  and  Technology, 430074 Wuhan, China}
 \email{bwei\_wang@hust.edu.cn}

\date{February 2022}

\maketitle

\begin{abstract}
The classical Khintchine and Jarn\'ik theorems, generalizations of a consequence of Dirichlet's theorem,
are fundamental results in the theory of Diophantine approximation. These theorems are concerned
with the size of the set of real numbers for which the partial quotients in their continued fraction expansions grows
with a certain rate. Recently it was observed that
the growth of product of pairs of consecutive partial quotients in the continued fraction expansion of a real number is associated with 
improvements to Dirichlet's theorem.  In this paper we consider 
the products of several
consecutive partial quotients raised to different powers.
 Namely, we find the Lebesgue measure and the Hausdorff dimension of the following set:
$$
{\D_{\mathbf t}}(\psi):=\left\{x\in[0, 1):  
\prod\limits_{i=0}^{m-1}{a^{t_i}_{n+i}(x)}
\ge \Psi(n)\ {\text{for infinitely many}} \ n\in \N        
\right\},
$$
where $t_i\in\mathbb R_+$ for all ${0\leq i\leq m-1}$, and $\Psi:\N\to\R_{\ge 1}$ is a positive function.

\end{abstract}


\section{Statement of Results}

The fundamental objective in the theory of Diophantine approximation is to seek answers to the question \emph {how well an irrational number can be approximated by a rational number}? This question in the one dimensional settings has been well understood  as the theory of continued fractions provides quick and efficient way for finding good rational approximations to irrational numbers.  The continued fraction can be computed by the Guass transformation $T:[0, 1)\to [0, 1)$ defined as
\begin{equation*}
T(0) =0\quad \quad {\rm and} \quad T(x) =\frac{1}{x}({\rm mod} \ 1) \quad {\rm if} \quad x\in(0,1).
\end{equation*}
Then every $x\in[0,1)$ admits a unique continued fraction expansion
\begin{equation*}\label{cf1}
x=\frac{1}{a_{1}(x)+\displaystyle{\frac{1}{a_{2}(x)+\displaystyle{\frac{1}{
a_{3}(x)+_{\ddots }}}}}}\end{equation*}
where
$a_{n}(x)$ are called the partial quotients of $x$ with $$a_{1}(x)=\floor*{\frac{1}{x}}  \quad \text{and}\quad  a_{n}(x)=\floor*{ \frac{1} {T^{n}(x)}}=a_1\left(T^{n-1}(x)\right)\in\N$$ for each $n\geq 1$ (where $\lfloor \cdot\rfloor$ stands for the integral part). Equation \eqref{cf1} can also be represented as $$x=[a_{1}(x),a_{2}(x),a_{3}(x),\ldots,a_{n}(x)+T^{n}x ]= [a_{1}(x),a_{2}(x),a_{3}(x),\ldots ].$$

Studying the properties of growth of partial quotients valid for almost all (or almost none) $x\in[0, 1)$  is a major area of investigation within the theory of continued fractions and is referred to as the metrical theory of continued fractions. Since the partial quotients can be obtained through Gauss map, the theory has close connections with dynamical systems, ergodic theory and Diophantine approximation. Historically, the focus has been on the metrical theory of the sets
\begin{equation*}\label{BB}
\mathcal E(\Psi):=\left\{x\in[0, 1): a_{n}(x)\geq \Psi(n) \ \text{for infinitely many} \ n\in \N \right\} \end{equation*}
for a given function $\Psi:\N\to\R_{\ge 1}$.
Borel-Bernstein's theorem \cite{Be_12, Bo_12} is a fundamental result that describes the size of the set $\mathcal E(\Psi)$ in terms of  Lebesgue measure.
\begin{thm}[Borel-Bernstein, 1911-1912]\label{Bor}  Let $\Psi:\N\to\R_{\ge 1}$. Then
\begin{equation*}
\mathcal L\big(\mathcal E(\Psi)\big)=
\begin{cases}
0\  & \mathrm{if}\quad \sum_{n=1}^{\infty }\frac{1}{\Psi (n)} \,<\,\infty, \\[2ex]
1 \  & \mathrm{if}\quad \sum_{n=1}^{\infty}\frac{1}{\Psi (n)} \, =\,\infty .
\end{cases}
\end{equation*}
\end{thm}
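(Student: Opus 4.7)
Set $A_n := \{x \in [0,1) : a_n(x) \geq \Psi(n)\}$, so that $\mathcal{E}(\Psi) = \limsup_n A_n$. The strategy is to reduce both assertions to (quantitative) Borel--Cantelli arguments anchored on the basic estimate $\mathcal{L}(A_n) \asymp 1/\Psi(n)$.

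I would first prove this estimate. The quickest route uses the Gauss map: since $\{x : a_n(x) \geq N\} = T^{-(n-1)}\{y : a_1(y) \geq N\}$ and the Gauss measure $\mu_G = \tfrac{1}{\log 2}\,\frac{dx}{1+x}$ is $T$-invariant and comparable to Lebesgue measure with absolute constants, the problem reduces to the trivial computation $\mathcal{L}\{a_1 \geq N\} = \mathcal{L}(0, 1/N] = 1/N$. Alternatively, decomposing $[0,1)$ into cylinders $I_{n-1}(a_1,\ldots,a_{n-1})$ and applying the standard length formulas in terms of the convergent denominators $q_k$ shows that inside each cylinder the proportion on which $a_n \geq \Psi(n)$ is $\asymp 1/\Psi(n)$, uniformly in $(a_1,\ldots,a_{n-1})$.

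The convergence case is then immediate from the first Borel--Cantelli lemma applied to $(A_n)$. For the divergence case, the events $A_n$ are not independent, so I would establish the two-point quasi-independence
$$
\mathcal{L}(A_m \cap A_n) \;\leq\; C\, \mathcal{L}(A_m)\, \mathcal{L}(A_n) \qquad (m \neq n)
$$
by essentially the same cylinder analysis performed at level $n - 1$: the constraint $a_m \geq \Psi(m)$ merely restricts which cylinders to sum over, while the constraint $a_n \geq \Psi(n)$ still accounts for a proportion $\asymp 1/\Psi(n)$ of each such cylinder, by the bounded-distortion property of $T$. Combined with $\sum_n \mathcal{L}(A_n) = \infty$, the Chung--Erd\H{o}s (divergent) Borel--Cantelli inequality then yields $\mathcal{L}(\limsup_n A_n) \geq 1/C > 0$.

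To upgrade positive measure to full measure, I would run the same quasi-independence argument relativized to an arbitrary cylinder $I_k(b_1,\ldots,b_k)$, obtaining a uniform lower bound $\mathcal{L}\bigl(I_k \cap \mathcal{E}(\Psi)\bigr) \geq c\, \mathcal{L}(I_k)$; the Lebesgue density theorem then forces $\mathcal{L}\bigl(\mathcal{E}(\Psi)\bigr) = 1$. The main obstacle is this divergence half: the cylinder estimates are routine one at a time, but ensuring that the quasi-independence constant $C$ is absolute --- uniform in $m, n$ and in the ambient cylinder --- requires careful bookkeeping with the length formulas and bounded distortion of the Gauss map, and it is precisely this uniformity that makes the density-theorem upgrade go through.
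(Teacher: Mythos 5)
Your plan is correct and is, in essence, the classical textbook proof of Borel--Bernstein (compare Khintchine's book or Iosifescu--Kraaikamp). The paper itself does not prove Theorem~\ref{Bor} --- it is cited as a known result --- but when the paper proves its generalization (Theorem~\ref{mainLeb}), it proceeds differently: it invokes the dynamical Borel--Cantelli lemma of Kleinbock--Wadleigh/Philipp (Lemma~\ref{KWlemma}), which rests on the effective mixing of the Gauss map and delivers the zero--one dichotomy in one stroke once $\mu_G(A_n)\asymp 1/\Psi(n)$ is established. Your route is more elementary and self-contained: you prove the two-point quasi-independence $\mathcal L(A_m\cap A_n)\le C\,\mathcal L(A_m)\mathcal L(A_n)$ directly by the bounded-distortion/cylinder analysis, apply the Chung--Erd\H{o}s inequality to get positive measure, and then relativize to an arbitrary cylinder and invoke the Lebesgue density theorem to upgrade to full measure. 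What the paper's approach buys is brevity and a clean factorization (the zero--one law is a black box, so only the single-event measure needs to be computed, which matters for the harder weighted products); what your approach buys is that it is entirely first-principles and exposes exactly where the bounded distortion of $T$ is used. One small remark: the density-theorem upgrade works as you describe, but it is worth noting that the uniformity of $C$ across ambient cylinders is genuinely the crux --- this is equivalent to the bounded-distortion estimate $1\le q_{n+m}(a_1,\dots,a_n,b_1,\dots,b_m)/\bigl(q_n(a_1,\dots,a_n)q_m(b_1,\dots,b_m)\bigr)\le 2$ recorded in Proposition~\ref{pp3}, so you should cite or reprove that explicitly rather than leave it as ``careful bookkeeping''.
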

Good \cite{Good} and {\L}uczak \cite{Luczak} were the main contributors to studying the Hausdorff dimension of this set for $\Psi(n)$ tending to
infinity at a polynomial $n^{a}$ and super-exponential speeds $a^{b^n}$ respectively,  see also \cite{MR1464376, Hir, Moor} and references therein.
Then the dimension of $\mathcal{E}(\Psi)$ was computed by Wang-Wu \cite{WaWu08} for arbitrary $\Psi$.
In what follows, $P(T, \phi)$ will stand for the pressure function for the dynamics of the Gauss map $T$ with potential $\phi$; see \S\ref{Pressure Functions} for a precise definition.
\begin{thm}[Wang-Wu, 2008]
\label{WaWu}
Let $\Psi:\N\to\R_{\ge 1}$. Denote \begin{equation}\label{Bb}\log B:=\liminf\limits_{n\rightarrow \infty }\frac{\log
\Psi (n)}{n} \ { and}\ \log b:=\liminf\limits_{n\rightarrow \infty }\frac{\log
\log \Psi (n)}{n}.\end{equation}
Then
\begin{equation*}
\hdim \mathcal E(\Psi)=\left\{
\begin{array}{ll}1,&
\mathrm{if}\ \  B=1, \\ [2ex]
\inf \left\{s\geq 0: P\big(T, -s(\log B+\log |T^{\prime
}|)\big)\le 0\right\}&
\mathrm{if}\ \ 1<B< \infty, \\ [2ex] \frac{1}{1+b} &
\mathrm{if}\ \ B=\infty.
\end{array}
\right.
\end{equation*} {In particular, $\hdim \mathcal E(\Psi) > 1/2$ if $B<\infty$.}
\end{thm}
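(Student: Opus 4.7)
The proof naturally splits into three cases according to the value of $B$, with the main technical work occurring in the case $1 < B < \infty$; the boundary cases $B = 1$ and $B = \infty$ will follow by limiting arguments once the middle case is established. Throughout, I rely on the standard continued-fraction estimates, particularly $|I_n(a_1,\ldots,a_n)| \asymp q_n^{-2}$ and $q_n \asymp a_n q_{n-1}$, together with the thermodynamic formalism for the Gauss map.

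For the upper bound in the case $1 < B < \infty$, I would use the natural cover of $\mathcal{E}(\Psi)$ by cylinders: for each $N$,
\[
\mathcal{E}(\Psi) \subseteq \bigcup_{n \geq N} \bigcup_{\substack{(a_1,\ldots,a_{n-1}) \in \N^{n-1} \\ a_n \geq \Psi(n)}} I_n(a_1, \ldots, a_n).
\]
Denote by $s_0$ the claimed dimension. Using the cylinder length estimates, the $s$-dimensional Hausdorff sum of this cover is comparable to the partition function defining $P(T, -s(\log B + \log|T'|))$; for $s > s_0$ this pressure is strictly negative, so the sum is finite. Letting $N \to \infty$ gives $\hdim \mathcal{E}(\Psi) \leq s_0$.

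For the matching lower bound, I would construct a Cantor-type subset $F \subseteq \mathcal{E}(\Psi)$. Fix a rapidly increasing sequence $\{n_k\}$ and an integer $M$, and take $F$ to be the set of $x$ with $a_{n_k}(x) \in [B^{n_k}, 2B^{n_k}]$ for every $k$ and $a_j(x) \in \{1, \ldots, M\}$ otherwise. Equip $F$ with a product-type Bernoulli measure $\mu$ distributing mass equally among admissible branches, and apply the mass distribution principle: the local estimate $\mu(B(x,r)) \lesssim r^s$ follows from treating $F$ as a conformal iterated function system whose dimension is characterized by a Bowen-type pressure equation. Sending $M \to \infty$ and choosing $\{n_k\}$ sufficiently sparse matches the bound to $s_0$.

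The case $B = 1$ follows by sandwiching: $\mathcal{E}(\Psi) \supseteq \mathcal{E}(\Psi_\varepsilon)$ with $\Psi_\varepsilon(n) = (1+\varepsilon)^n$ for arbitrary $\varepsilon > 0$, and continuity of the pressure formula in $B$ drives the dimension to $1$ as $\varepsilon \to 0$. For $B = \infty$, I would select a subsequence along which $\log\log\Psi(n_k)/n_k \to \log b$ and perform an analogous Cantor construction: the doubly-exponential growth forces the dimension of $F$ to be dictated entirely by the density of $\{n_k\}$, yielding $\tfrac{1}{1+b}$. The main obstacle throughout is the careful execution of the lower bound in Case 2: tuning the Cantor construction so that the natural measure's local dimension aligns precisely with $s_0$ requires a delicate analysis of the pressure function and its relation to the symbolic dynamics on $F$, especially at the transitional scales between the large indices $n_k$ and the bounded indices in between.
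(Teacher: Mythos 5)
The paper does not prove Theorem~\ref{WaWu}---it is cited from \cite{WaWu08}---but its proof of the analogous Theorem~\ref{BHWthm} in \S\S5--6 runs along the lines you sketch, so the outline can be checked against that. Your upper-bound cover by the individual cylinders $I_n(a_1,\dots,a_n)$ with $a_n\ge\Psi(n)$ gives the wrong exponent. Fixing $(a_1,\dots,a_{n-1})$ and summing over the last digit,
\[
\sum_{a_n\ge\Psi(n)}\bigl|I_n(a_1,\dots,a_n)\bigr|^s \asymp \frac{1}{q_{n-1}^{2s}}\sum_{a_n\ge\Psi(n)}\frac{1}{a_n^{2s}} \asymp \frac{1}{q_{n-1}^{2s}\,\Psi(n)^{2s-1}}
\]
for $s>1/2$, so the quantity that feeds into the pressure is $(2s-1)\log B$ rather than $s\log B$, and you only obtain the bound $\inf\{s:P(T,-s\log|T'|)\le(2s-1)\log B\}$, which is strictly larger than the claimed $s_0$ when $1<B<\infty$. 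To get the sharp exponent you must cover by the coalesced intervals $J_{n-1}(a_1,\dots,a_{n-1})=\bigcup_{a_n\ge\Psi(n)}I_n(a_1,\dots,a_n)$, each of length $\asymp(q_{n-1}^2\Psi(n))^{-1}$, so that $|J_{n-1}|^s\asymp q_{n-1}^{-2s}\Psi(n)^{-s}$; this is exactly what the paper does with the sets $J_n$ in \S5.

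Your lower bound equips $F$ with a measure ``distributing mass equally among admissible branches,'' and that cannot reach the claimed dimension. On a block $w\in\{1,\dots,M\}^N$ of bounded digits the equal weight is $M^{-N}$, whereas by (P$_3$) the cylinder length $|I_N(w)|\asymp q_N(w)^{-2}$ can be as small as $(M+1)^{-2N}$; the mass distribution principle (Proposition~\ref{p1}) then forces $s\le\log M/(2\log(M+1))<1/2$ regardless of $M$ and $B$. Since the theorem itself guarantees $s_0>1/2$ for $B<\infty$, the uniform measure can never close the gap. What is needed is the Gibbs weighting $\mu(I_N(w))\propto q_N(w)^{-2s}$ on the bounded blocks, with equal weighting used only across the ``jump'' digit $a_{n_k}\in[B^{n_k},2B^{n_k})$; this is precisely the measure built in \eqref{ff7}. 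Your own remark that the dimension is ``characterized by a Bowen-type pressure equation'' already points the way: the measure realising the Bowen dimension of a conformal IFS is the conformal (Gibbs) measure, not the equidistributed one.
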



In this paper, we study  a generalized form of the set $\mathcal E(\Psi)$ which has close connections with the improvements to Dirichlet's theorem (1842).
Namely, in \cite{KleinbockWadleigh}  Kleinbock-Wadleigh considered 
the set
\begin{equation}\label{E2}
\mathcal E_2(\Psi):=\left\{x\in[0, 1): {a_{n}(x)a_{n+1}(x)}\geq \Psi(n) \ \text{for infinitely many} \ n\in \N \right\},
\end{equation}
and found a zero-one law for $\mathcal L\big(\mathcal E_2(\Psi)\big)$, see \cite[Theorem 3.6]{KleinbockWadleigh}. This result was used to establish a zero-one law for the sets of $\psi$-Dirichlet improvable real numbers \cite[Theorem 1.8]{KleinbockWadleigh}, where $\psi$ is a positive non-increasing function. See  \cite[\S2]{KleinbockWadleigh} for a connection between \eqref{E2} and the improvements to Dirichlet's theorem, and {\cite{HKWW, BHS, Ba_20}} for further results in that direction.

The work of Kleinbock-Wadleigh was followed by Huang-Wu-Xu \cite{HuWuXu} with both Lebesgue measure and Hausdorff dimension results for a natural generalization of the set \eqref{E2}. Namely, for $m\in\N$ they considered
\begin{equation}\label{em}
\mathcal E_m(\Psi):=\left\{x\in[0, 1): {a_{n}(x)\cdots a_{n+m-1}(x)}\geq \Psi(n) \ \text{for infinitely many} \ n\in \N \right\},
\end{equation}
and proved the following
\pagebreak
\begin{thm}[Huang-Wu-Xu, 2019]\label{HWXLeb} Given  {$\Psi:\N\to\R_{\ge 1}$},
\begin{itemize}
\item[\rm (a)] \cite[Theorem 1.5]{HuWuXu}
\begin{equation*}
\mathcal L\big(\mathcal E_m(\Psi)\big)=
\begin{cases}
0\  & \mathrm{if}\quad \sum\limits_{n}^\infty\frac {\log^{m-1}\Psi(n)}{\Psi(n)} \,<\,\infty ,\\[2ex]
1 \  & \mathrm{if}\quad \sum\limits_{n}^\infty\frac {\log^{m-1}\Psi(n)}{\Psi(n)}\,=\,\infty ;
\end{cases}
\end{equation*}
\item[\rm (b)] \cite[Theorem 1.7]{HuWuXu}
\begin{equation*}
\dim _{\mathrm{H}}\mathcal{E} _m(\Psi )=\left\{
\begin{array}{ll}1,&
\mathrm{if}\ \  B=1, \\ [3ex]
\inf \big\{s\geq 0: {P}\left( T,-f_m(s)\log B-s\log|T^{\prime
}|\right) \leq 0\big\}&
\mathrm{if}\ \ 1<B< \infty; \\ [3ex] \frac{1}{1+b} &
\mathrm{if}\ \ B=\infty,
\end{array}
\right.
\end{equation*}
where   $B,b$ are as in \eqref{Bb},
and $f_m$ is given by the following iterative formula:
\begin{equation}\label{fi}{f_1(s)=s, \quad f_{k+1}(s)=\frac{sf_k(s)}{1-s+f_k(s)}, \ k\geq 1.}\end{equation}
\end{itemize}
\end{thm}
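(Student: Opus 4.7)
I would split Theorem \ref{HWXLeb} into three pieces of decreasing ease: part (a) is a Borel--Cantelli-type computation, the boundary cases $B=1$ and $B=\infty$ of part (b) follow almost directly from Theorem \ref{WaWu}, and the main case $1<B<\infty$ requires a pressure-function analysis together with a delicate optimisation that is the source of the recursive formula for $f_m$.

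\textbf{Part (a).} Using standard estimates for lengths of rank-$n$ cylinders in the continued fraction expansion together with a generalised divisor-function asymptotic, I would show that conditionally on $(a_1,\ldots,a_{n-1})$,
$$\mathcal{L}\bigl\{x:\ a_n(x)\cdots a_{n+m-1}(x)\ge N\bigr\}\ \asymp\ \sum_{k_0\cdots k_{m-1}\ge N}\prod_{i=0}^{m-1}\frac{1}{k_i^{2}}\ \asymp\ \frac{\log^{m-1}N}{N}.$$
The convergence half of the dichotomy is then immediate from Borel--Cantelli. For the divergence half I would verify quasi-independence of the events $E_n=\{a_n(x)\cdots a_{n+m-1}(x)\ge\Psi(n)\}$ and apply a divergence Borel--Cantelli of Gallagher type, or alternatively use ergodicity of $T$ to bootstrap positive measure to full measure.

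\textbf{Part (b), boundary cases.} For $B=1$, the trivial inclusion $\mathcal{E}(\Psi)\subset\mathcal{E}_m(\Psi)$ (since each $a_i\ge1$) together with Theorem \ref{WaWu} gives $\hdim\mathcal{E}_m(\Psi)\ge1$. For $B=\infty$, the same inclusion yields the lower bound $\tfrac{1}{1+b}$, and since at least one factor in the product must exceed $\Psi(n)^{1/m}$ we have
$$\mathcal{E}_m(\Psi)\ \subset\ \bigcup_{i=0}^{m-1}\bigl\{x:\ a_{n+i}(x)\ge\Psi(n)^{1/m}\text{ for infinitely many }n\bigr\},$$
whose Hausdorff dimension also equals $\tfrac{1}{1+b}$ by countable stability of $\hdim$ and the observation that $\log\log\Psi(n)^{1/m}\sim\log\log\Psi(n)$ as $n\to\infty$.

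\textbf{Part (b), main case $1<B<\infty$.} Assume $\Psi(n)=B^n$ without loss of generality. For the upper bound I would cover $\mathcal{E}_m(\Psi)$ by unions of $(n+m-1)$-th level cylinders meeting the product constraint and, via Bowen-type pressure estimates for the Gauss system, deduce that every $s>\hdim\mathcal{E}_m(\Psi)$ satisfies $P(T,-f_m(s)\log B-s\log|T'|)\le0$. For the lower bound I would build a Cantor-like subset: fix a vector $\mathbf{t}=(t_0,\ldots,t_{m-1})$ with $\sum t_i=1$, require $a_{n+i}(x)\asymp B^{nt_i}$ for $n$ in a sufficiently sparse sequence (with other partial quotients kept bounded), evaluate its Hausdorff dimension by a mass-distribution principle, and finally optimise over $\mathbf{t}$. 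The main obstacle will be identifying the optimal $\mathbf{t}$: the recursion $f_{k+1}(s)=sf_k(s)/(1-s+f_k(s))$ should arise because fixing $t_0$ reduces the $m$-variable problem to an $(m-1)$-variable one with effective target $B^{n(1-t_0)}$, so that a critical-point condition generates a one-step recurrence; matching this combinatorial optimum to the infimum characterisation via Bowen's formula is where I expect the bulk of the technical work.
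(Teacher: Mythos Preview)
This theorem is cited from Huang--Wu--Xu rather than proved in the present paper, but the paper does prove strict generalisations (Theorems~\ref{mainLeb}, \ref{BHWthmeasy}, \ref{BHWthm}) whose proofs, specialised to equal weights, match your plan almost verbatim: the same measure asymptotic for part~(a), the same inclusions for the boundary cases of~(b), and a cover/Cantor-set/optimisation scheme for the main case of~(b).

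Two points of contrast are worth noting. For part~(a) you propose verifying quasi-independence and then invoking a Gallagher-type divergence lemma; the paper instead uses a single dynamical Borel--Cantelli lemma (Lemma~\ref{KWlemma}, from Philipp via Kleinbock--Wadleigh) which, relying on the mixing of the Gauss map, disposes of both halves at once with no separate quasi-independence check. For the upper bound in the main case of~(b), your sketch (``cover by $(n{+}m{-}1)$-th level cylinders and read off a pressure inequality'') understates the mechanism producing~$f_m$: a direct cylinder cover does not yield the potential $-f_m(s)\log B$. In the paper (Section~5, for $m=2$) the recursion comes instead from a splitting
\[
\mathcal{E}_m(B)\ \subset\ \bigl\{a_n\ge A^n\ \text{i.m.}\bigr\}\ \cup\ \bigl\{a_n< A^n,\ a_{n+1}\cdots a_{n+m-1}\ge B^n/a_n\ \text{i.m.}\bigr\},
\]
bounding each piece separately (the second reducing to an $(m{-}1)$-block problem), and then choosing $A$ to equalise the two dimension estimates; it is this optimal choice of $A$ that generates the recurrence $f_{k+1}(s)=sf_k(s)/(1-s+f_k(s))$. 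This is exactly the mirror image of the $\mathbf{t}$-optimisation you correctly identify on the lower-bound side, so your overall picture is right --- only the upper-bound half also needs the recursive decomposition, not just a single cover.
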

In this paper we consider a weighted generalization of \eqref{em}:  take ${\mathbf t} = {(t_0,\dots,t_{m-1})}\in\mathbb R_+^m$ and  {$\Psi:\N\to\R_{\ge 1}$}, and define
\begin{equation*}\label{mainset}
{\D_{\mathbf t}}(\Psi):=\left\{x\in[0, 1):  {\prod_{i=0}^{m-1}}a^{t_i}_{n+i}(x)\ge \Psi(n) \ {\text{for infinitely many}} \ n\in \N \right\}.
\end{equation*}
Clearly {$\D_m (\Psi)=\D_{{\mathbf 1}_m}(\Psi)$, where ${\mathbf 1}_m = (\underbrace{1,\dots,1}_{m})$}.
Generalizing Theorem \ref{HWXLeb}(a), we prove the following dichotomy statement for the Lebesgue measure of ${\D_{\mathbf t}}(\Psi)$:

\begin{thm}\label{mainLeb} Let  {$\Psi:\N\to\R_{\ge 1}$}. Then
\begin{equation*}
\mathcal L\big({\D_{\mathbf t}}(\Psi )\big)=\left\{
\begin{array}{ll}0,&
\mathrm{if}\ \  \sum\limits_{n=1}^{\infty}\frac{(\log \Psi(n))^{\ell-1}}{\Psi(n)^{1/t_{\max}}} <\infty, \\ [2ex]
1 &
\mathrm{if}\ \  \sum\limits_{n=1}^{\infty}\frac{(\log \Psi(n))^{\ell-1}}{\Psi(n)^{1/t_{\max}}} =\infty,\end{array}
\right.
\end{equation*}where \begin{equation}\label{tmax}
t_{\max}=\max\{t_i: {0\le i\le m-1}\}, \ \ell=\# \{i: t_i=t_{\max}\}.
\end{equation}
\end{thm}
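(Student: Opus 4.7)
Set $E_n:=\{x\in[0,1):\prod_{i=0}^{m-1}a_{n+i}^{t_i}(x)\ge\Psi(n)\}$, so that $\D_{\mathbf t}(\Psi)=\limsup_n E_n$. The strategy is to compute $\mathcal L(E_n)$ up to multiplicative constants, apply Borel--Cantelli in both directions (using quasi-independence supplied by mixing of the Gauss map for the divergent half), and then promote positive measure to full measure via exactness of $T$.

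\textbf{Level-set estimate.} By $T$-invariance of the Gauss measure $\mu$ (which is equivalent to $\mathcal L$ with bounded Radon--Nikodym derivative), together with the standard cylinder bound $\mathcal L([b_0,\dots,b_{m-1}])\asymp\prod_i b_i^{-2}$, the task reduces to estimating
$$S(\Psi)\ :=\ \sum_{\substack{(b_0,\dots,b_{m-1})\in\N^m\\ \prod b_i^{t_i}\ge\Psi}}\prod_{i=0}^{m-1}\frac{1}{b_i^2}.$$
Reordering so that $t_0=\cdots=t_{\ell-1}=t_{\max}>t_\ell\ge\cdots$, I fix the ``light'' coordinates $(b_\ell,\dots,b_{m-1})$ and apply the classical asymptotic
$$\sum_{\substack{c_1,\dots,c_\ell\in\N\\ \prod c_j\ge M}}\prod_{j=1}^\ell \frac{1}{c_j^2}\ \asymp\ \frac{(\log M)^{\ell-1}}{M}\qquad(M\ge 1)$$
with $M=(\Psi/\prod_{i\ge\ell}b_i^{t_i})^{1/t_{\max}}$. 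The residual outer sum factorises as $\prod_{i\ge\ell}\sum_{b\ge 1}b^{t_i/t_{\max}-2}$, which converges (to a constant independent of $\Psi$) precisely because $t_i<t_{\max}$. The matching lower bound comes from restricting to $b_i=1$ for all $i\ge\ell$; the ``all light'' regime in which $\prod_{i\ge\ell}b_i^{t_i}\ge\Psi$ on its own contributes only $O(\Psi^{-1/t_\ell})$ and is dominated. Therefore $\mathcal L(E_n)\asymp (\log\Psi(n))^{\ell-1}/\Psi(n)^{1/t_{\max}}$.

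\textbf{Borel--Cantelli and $0$--$1$ law.} The convergence half is the first Borel--Cantelli lemma applied to $(E_n)$. For divergence, I apply the variance (Sprindzuk) version of Borel--Cantelli. Its hypothesis follows from the exponential $\psi$-mixing of the Gauss map: whenever $k\ge m$, the events $E_n$ and $E_{n+k}$ depend on disjoint blocks of partial quotients, and bounded distortion yields
$$\mathcal L(E_n\cap E_{n+k})\ \lesssim\ \mathcal L(E_n)\,\mathcal L(E_{n+k}),$$
while the $O(m)$ overlap terms per $n$ (those with $|n-k|<m$) contribute only $O\big(\sum_n\mathcal L(E_n)\big)$ to the variance and are harmlessly absorbed. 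Hence $\mathcal L(\D_{\mathbf t}(\Psi))>0$. Since $E_n$ is $T^{n-1}$-measurable, $\D_{\mathbf t}(\Psi)$ lies in the tail $\sigma$-algebra $\bigcap_N T^{-N}\mathcal B$, and exactness of the Gauss system forces its Lebesgue measure to be $1$.

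\textbf{Main difficulty.} The substantive novelty relative to the unweighted theorem of Huang--Wu--Xu is the weighted level-set estimate: one must cleanly separate the $\ell$ heavy indices (which drive the leading $(\log\Psi)^{\ell-1}/\Psi^{1/t_{\max}}$ behaviour, exactly as in the $\mathbf t=\mathbf 1_m$ case) from the $m-\ell$ light indices, and verify that the latter contribute only a bounded multiplicative constant via the convergent tail sum $\sum_{b\ge 1}b^{t_i/t_{\max}-2}$ available precisely when $t_i<t_{\max}$. Once that estimate is in hand, the divergence half is a routine adaptation of the Huang--Wu--Xu argument.
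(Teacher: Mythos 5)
Your proposal is correct but takes a genuinely different route from the paper at both stages. For the level-set estimate, the paper proves Lemma~\ref{LebesgueAn} by a double induction, first on $\ell=\#\{i:t_i=t_{\max}\}$ and then on $d=m-\ell$, peeling off one partial quotient per inductive step. You instead make a single heavy/light decomposition: you quote the equal-exponent asymptotic $\sum_{\prod c_j\ge M}\prod c_j^{-2}\asymp (\log M)^{\ell-1}/M$ (which is precisely the base case~(IIa) of the paper's induction, i.e.\ Huang--Wu--Xu), substitute $M=(\Psi/\prod_{i\ge\ell}b_i^{t_i})^{1/t_{\max}}$, and then show the residual sum over light indices factorises into a product of convergent series $\sum_b b^{t_i/t_{\max}-2}$ because $t_i<t_{\max}$. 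This is cleaner conceptually — it isolates the reason the light exponents only contribute a constant — at the cost of leaning on the equal-weight lemma as a black box and having to dispose of the ``all-light'' regime separately (note your $O(\Psi^{-1/t_\ell})$ bound there should really carry an extra $(\log\Psi)^{O(1)}$ factor, but it is still dominated since $t_\ell<t_{\max}$). For the zero--one law, the paper invokes the dynamical Borel--Cantelli lemma of Philipp/Kleinbock--Wadleigh (Lemma~\ref{KWlemma}), which directly yields ``almost all or almost none'' from the (in)divergence of $\sum\mu_G(A_n)$ for sets $A_n$ that are unions of cylinders of fixed order $m$. You instead run the first Borel--Cantelli lemma for convergence, a Sprind\v zuk/variance Borel--Cantelli for divergence (with quasi-independence $\mathcal L(E_n\cap E_{n+k})\lesssim\mathcal L(E_n)\mathcal L(E_{n+k})$ for $k\ge m$ supplied by $\psi$-mixing, and the $O(m)$ near-diagonal terms absorbed), and then pass from positive to full measure by observing $\limsup E_n$ is a tail event and the Gauss map is exact. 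Both are valid; the paper's choice is more compact because the dynamical lemma packages quasi-independence and the $0$--$1$ promotion into one cited statement, while your route makes the mechanism (mixing $\Rightarrow$ quasi-independence $\Rightarrow$ positive measure, exactness $\Rightarrow$ full measure) explicit.
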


\ignore{\begin{cor}\label{KKW}
Let $\Psi:\mathbb N\to [1, \infty)$ be an increasing function. Define
 $$
F_m^{\mathbf t} (\Psi):=\left\{x\in[0, 1):  \prod_{i=1}^ma_{n+i}^{t_i}(x) \ge \Psi(q_n(x)) \ {\text{for i.m.}} \ n\in \N\right\}.
$$
Then
\begin{equation*}
\mathcal L\left(F^\mathbf t_m(\Psi)\right)=
\begin{cases}
0\  & \mathrm{if}\quad \sum\limits_{n=1}^{\infty}\frac{(\log \Psi(n))^{\ell-1}}{n\Psi(n)^{1/t_{\max}}}<\infty ; \\[2ex]
1 \  & \mathrm{if}\quad \sum\limits_{n=1}^{\infty}\frac{(\log \Psi(n))^{\ell-1}}{n\Psi(n)^{1/t_{\max}}}=\infty .
\end{cases}
\end{equation*}
\end{cor}

Corollary \ref{KKW}  recovers not only Khintchine's classical theorem (1924) by setting $m=1$, and $t_1=t_2=\ldots=1$ but also  Kleinbock-Wadleigh theorem \cite{KleinbockWadleigh}  by setting $m=2$, and $t_1=t_2=\ldots=1$. See the next section for further details.}

A weighted generalization of Theorem \ref{HWXLeb}(b)  
{is straightforward in} the
{case when $B$ is either infinite or equal to $1$:
\begin{thm}\label{BHWthmeasy}  Let $\Psi:\N\to\R_{\ge 1}$, and let $B,b$ be as in \eqref{Bb}.
Then
\begin{equation*}
\hdim {\D_{\mathbf t}}(\Psi )=\left\{
\begin{array}{ll} \ \, 1&
\mathrm{if}\ \  B=1, \\ [3ex]
\frac{1}{1+b} &
\mathrm{if}\ \ B=\infty.  \\ [3ex]
\end{array}
\right.
\end{equation*}
\end{thm}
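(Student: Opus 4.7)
The plan is to sandwich $\mathcal{D}_{\mathbf{t}}(\Psi)$ between two sets of the form $\mathcal{E}(\tilde\Psi)$, thereby reducing the weighted-product problem to the single-quotient problem already treated by Wang--Wu (Theorem \ref{WaWu}). For the lower-bound inclusion I would exploit the fact that every partial quotient satisfies $a_j(x)\ge 1$: fixing any index $i_0\in\{0,\dots,m-1\}$, if the single quotient $a_{n+i_0}(x)$ already exceeds $\Psi(n)^{1/t_{i_0}}$, then the whole weighted product $\prod_i a_{n+i}^{t_i}(x)$ automatically exceeds $\Psi(n)$. After a harmless index shift this yields $\mathcal{D}_{\mathbf{t}}(\Psi)\supseteq \mathcal{E}(\Psi_{i_0}^{-})$, where
$$\Psi_{i_0}^{-}(n):=\Psi(n-i_0)^{1/t_{i_0}}.$$

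For the matching upper bound I would use a pigeonhole argument: whenever $\prod_{i=0}^{m-1}a_{n+i}^{t_i}(x)\ge\Psi(n)$, at least one of the $m$ factors must satisfy $a_{n+i_0(n)}(x)^{t_{i_0(n)}}\ge\Psi(n)^{1/m}$. Restricting to a subsequence on which $i_0(n)$ is constant gives
$$
\mathcal{D}_{\mathbf{t}}(\Psi)\subseteq\bigcup_{i_0=0}^{m-1}\mathcal{E}\bigl(\Psi_{i_0}^{+}\bigr),\qquad \Psi_{i_0}^{+}(n):=\Psi(n-i_0)^{1/(m t_{i_0})}.
$$

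The final step is to verify that the parameters $B$ and $b$ of \eqref{Bb} behave well under the two transformations $\Psi\mapsto \Psi^{c}$ (for a constant $c>0$) and $\Psi\mapsto\Psi(\cdot-i_0)$. A bounded shift has no effect on a liminf of the form $\liminf_n(\cdot)/n$, and the constant power merely rescales $\log\Psi$ by $c$ (so preserves the dichotomy $B=1$ versus $B=\infty$) while contributing a vanishing term $(\log c)/n$ inside the iterated logarithm; in particular $b$ is unchanged. Hence each auxiliary set $\mathcal{E}(\Psi_{i_0}^{\pm})$ has Hausdorff dimension $1$ when $B=1$ and $\frac{1}{1+b}$ when $B=\infty$, by Theorem \ref{WaWu}. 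Combining these evaluations with the sandwich inclusions (and using countable stability of $\hdim$ on the upper side) proves the theorem. I anticipate no real obstacle; the only point requiring care is this invariance of $B$ and $b$ under $\Psi\mapsto\Psi^{c}$, and it is precisely the failure of such invariance for $1<B<\infty$ that explains why the intermediate regime, in which the exponent $c$ interacts nontrivially with the pressure equation, has been deferred.
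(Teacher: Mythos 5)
Your proposal is correct, and the core sandwiching strategy (a single large quotient forces a large weighted product; pigeonhole gives a single large quotient from a large weighted product) is the same as the paper's. Where you differ is in how you evaluate the Hausdorff dimension of the auxiliary sets. The paper, after the same reduction, treats the $B=\infty$ regime by splitting into the three subcases $b=1$, $1<b<\infty$, $b=\infty$ and invoking {\L}uczak's Lemma~\ref{lemb} (plus a continuity statement like Proposition~\ref{tb} for $b=1$), in effect re-deriving the relevant piece of Theorem~\ref{WaWu} inline. You instead feed the transformed functions $\Psi_{i_0}^{\pm}$ directly into Theorem~\ref{WaWu} after the one-line check that a bounded index shift leaves $B,b$ fixed and that $\Psi\mapsto\Psi^{c}$ sends $B\mapsto B^{c}$ (hence fixes the three regimes $\{1\},(1,\infty),\{\infty\}$) and fixes $b$ exactly, because $\log\log\Psi^{c}=\log c+\log\log\Psi$ contributes only a vanishing $(\log c)/n$. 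This is shorter and avoids the case analysis on $b$ entirely; both arguments ultimately rest on the same underlying fact about $\{x: a_n(x)\ge c^{b^n}\}$, since Wang--Wu's $B=\infty$ case is itself proved via {\L}uczak. Your closing observation is also exactly right and mirrors the paper's rationale: the exponent $c$ \emph{does} change $B$ when $1<B<\infty$, which is why the naive pigeonhole loses a factor of $m$ in the exponent there and the pressure-function argument of Theorem~\ref{BHWthm} becomes necessary. Two small points to tidy if you write this out: (i) define $\Psi_{i_0}^{\pm}(n):=1$ for $n\le i_0$ so the hypothesis $\Psi:\N\to\R_{\ge1}$ of Theorem~\ref{WaWu} is met, and (ii) when $B=1$ the upper bound $\hdim\le 1$ is trivial, so only the lower-bound inclusion is needed there.
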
}

As for the remaining intermediate case $1<B< \infty$, we are only able to treat the $m=2$ case, characterizing the Hausdorff dimension of sets ${\D_{\mathbf t}}(\Psi )$ for ${\mathbf t} = ({t_0,t_1})\in\mathbb R_+^2$.
\begin{thm}\label{BHWthm}  Let $\Psi:\N\to\R_{\ge 1}$ be such that $1<B< \infty$, and let ${\mathbf t} = ({t_0,t_1})\in\mathbb R_+^2$.
Then
\begin{equation*}
\hdim {\D_{\mathbf t}}(\Psi )=
 \inf \big\{ s\geq 0: P(T, -s\log |T'|-f_{{t_0,t_1}}(s)\log B)\le 0\big\}
,
\end{equation*}
where 
\begin{equation}\label{ft1t2} f_{{t_0,t_1}}(s):=\frac{s^2}{{t_0t_1}\cdot \max\left\{{\frac{s}{t_1}+\frac{1-s}{t_0}, \frac{s}{t_0}}\right\}}.\end{equation}
\end{thm}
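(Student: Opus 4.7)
The plan is to follow the pressure-function strategy pioneered by Wang--Wu and Huang--Wu--Xu, adapted to the weighted two-partial-quotient constraint. I would proceed in three stages: a reduction to $\Psi(n)=B^n$, an optimized covering argument for the upper bound, and a Cantor construction with a Gibbs measure for the lower bound.

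\textbf{Reduction and upper bound.} A standard approximation argument, sandwiching $\Psi(n)$ between $B^{(1-\varepsilon)n}$ and $B^{(1+\varepsilon)n}$ (infinitely often and eventually, respectively) together with continuity of the pressure root in the parameter $B$, reduces the problem to $\Psi(n)=B^n$. Next, cover ${\D_{\mathbf t}}(B^n)$ by preimages $T^{-(n-1)}(F_n)\cap I_{n-1}(a_1,\dots,a_{n-1})$, where $F_n=\{y\in[0,1):a_1(y)^{t_0}a_2(y)^{t_1}\ge B^n\}$. The heart of the argument is a near-optimal cover of $F_n$ itself. Dyadically partition $F_n$ by $a_1(y)\in[2^k,2^{k+1})$ and compare two cover families at each scale $k$: (i) the $2^k$ preimage sub-intervals coming from the constraint $a_2\ge (B^n/a_1^{t_0})^{1/t_1}$, each of length $\asymp 2^{k(t_0/t_1-2)}B^{-n/t_1}$, contributing total $s$-content $2^{k(1+s(t_0/t_1-2))}B^{-ns/t_1}$; and (ii) the single enclosing interval $(1/2^{k+1},1/2^k]$ covering $A_n\cap\{a_1\in[2^k,2^{k+1})\}$ wholesale, contributing $s$-content $2^{-ks}$. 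Taking the pointwise minimum over $k$ and summing, the two strategies cross at $2^{k_\ast}=B^{ns/(st_0+(1-s)t_1)}$, where both equal $B^{-ns^2/(st_0+(1-s)t_1)}$; in the regime where (i) is already decreasing in $k$ the bound simplifies to $B^{-ns/t_1}$. In either case the total $s$-content of $F_n$ is $\asymp B^{-nf_{t_0,t_1}(s)}$, since formula \eqref{ft1t2} is precisely the minimum of the two exponents $s^2/(st_0+(1-s)t_1)$ and $s/t_1$. Multiplying by the distortion factor $\sum_{a_1,\dots,a_{n-1}}q_{n-1}^{-2s}\asymp e^{(n-1)P(T,-s\log|T'|)}$ and summing over $n$, Borel--Cantelli gives convergence whenever $P(T,-s\log|T'|)<f_{t_0,t_1}(s)\log B$, which by monotonicity of the pressure in $s$ establishes the upper bound on the Hausdorff dimension.

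\textbf{Lower bound.} For $s$ strictly below the critical value, construct a Cantor subset $E_s\subseteq{\D_{\mathbf t}}(B^n)$ along a lacunary sequence $\{n_k\}$ by prescribing $(a_{n_k},a_{n_k+1})$ to fall in a geometric window that realizes the extremal cover regime above---either a single ``heavy'' digit ($a_{n_k}\asymp B^{n_k/t_0}$ or $a_{n_k+1}\asymp B^{n_k/t_1}$ with the other free) or the balanced split $a_{n_k}\asymp B^{n_k\alpha_\ast/t_0}$, $a_{n_k+1}\asymp B^{n_k(1-\alpha_\ast)/t_1}$ with $\alpha_\ast$ chosen so that the two cover strategies in (i) and (ii) coincide. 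Between constrained indices the partial quotients range freely in $\{1,\dots,M\}$. A Gibbs-type measure $\mu$ is placed on $E_s$, tuned via the Hanus--Mauldin--Urba\'nski transfer-operator machinery to the pressure equation $P(T,-s\log|T'|-f_{t_0,t_1}(s)\log B)=0$. The Mass Distribution Principle then yields $\hdim E_s\ge s-o(1)$, and letting $M\to\infty$ and $s$ approach the critical value gives the matching lower bound.

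\textbf{Main obstacle.} The principal difficulty is verifying that the Cantor construction indeed attains the exact exponent $f_{t_0,t_1}(s)$ with no loss. The $\max$ in \eqref{ft1t2} reflects that the extremal strategy transitions at the hyperplane $st_0=(2s-1)t_1$; handling both regimes uniformly, and checking that the Gibbs measure has the predicted local dimension at $\mu$-typical points (so that the Mass Distribution Principle applies with the correct exponent), is the most delicate step. This is the weighted analogue of the subtlety that, in Huang--Wu--Xu, leads to the iterative rather than closed-form definition of $f_m$, and the reason why the general $m\ge 3$ weighted case remains out of reach here.
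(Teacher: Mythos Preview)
Your proposal is correct and follows essentially the same route as the paper: an optimized covering for the upper bound that balances two cover families according to the size of $a_n$, and a Cantor-subset/mass-distribution argument for the lower bound, with the degenerate regime (your ``heavy digit'' case) handled separately and the balanced regime realized by prescribing $a_{n_k},a_{n_k+1}$ in geometric windows. The only cosmetic difference is that the paper introduces a single threshold parameter $A$ (so that $a_n^{t_0}\lessgtr A^n$ splits $\D_{\mathbf t}(B)$ into two limsup sets, one handled by citing Wang--Wu) and then optimizes $A$ to equate the two resulting pressure conditions, whereas you do the equivalent optimization dyadically scale-by-scale; your crossover $2^{k_*}$ is exactly the paper's $A^{n/t_0}$, and the relation $f_{t_0}(s)\log A=f_{t_0,t_1}(s)\log B$ that the paper derives is precisely your identity $2^{-k_*s}=B^{-nf_{t_0,t_1}(s)}$.
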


Note that $f_{1, 1}(s) = s^2$ {for all $0 \le s \le 1$}, which agrees with the $k=2$ case of \eqref{fi}. See \S\ref{final} for an explanation of why the case $m > 2$ is much more involved.

\begin{rem} It is worth highlighting an interesting phenomena here. The Lebesgue measure of the set ${\mathcal E_{{\mathbf t}}(\Psi )}$ is independent of the ordering of the exponents, whereas the Hausdorff dimension depends on it. For instance
\begin{equation*}\label{f12} f_{2, 1}(s)=\frac {s^2}{1+s},\ {\text{and}}\ \ f_{1, 2}(s)=\begin{cases}\frac{s^2}{2-s}&\text{\rm if }s \le \frac23;\\ \frac{s}{2}&\text{\rm if }s > \frac23.\end{cases}\end{equation*}
It is easy to see that $f_{2,1}(s)<f_{1,2}(s)$ for any $1/2<s<1$. Since  $\hdim \D_{\mathbf (1,2)}(\Psi ) \ge \hdim \D(\Psi ) > 1/2$
whenever $B<\infty$ (see Theorem \ref{WaWu}), 
it follows that in  Theorem \ref{BHWthm} 
one always {has}
$${\hdim \D_{\mathbf (2,1)}(\Psi )>\hdim \D_{\mathbf (1,2)}(\Psi )}.$$
\end{rem}

\noindent{\bf Acknowledgements.} {The research of A.\ Bakhtawar is supported by the ARC grant DP180100201, of M.\ Hussain 
by the ARC grant DP200100994, of D.\ Kleinbock 
by the NSF grant DMS-1900560, and of B.\ Wang by NSFC (11831007).}  Part of this work was carried out during the workshop ``Ergodic Theory, Diophantine approximation and related topics'' sponsored by the MATRIX Research Institute.

\ignore{
\section{ Background and literature survey}

At its most fundamental level, the theory of Diophantine approximation is concerned with the question of how well a real number can be approximated by rationals. Dirichlet's theorem (1842) is the starting point in this theory.

\begin{thm}[Dirichlet, 1842]\label{Dir}
\label{Dirichletsv}
 \noindent Given $x\in \R$ and $t>1
$, there exist integers $p,q$
 such that
  \begin{equation*}\label{eqdir} \left\vert qx-p\right\vert\leq 1/t \quad{\rm and} \quad  1\leq{q}<{t}. \end{equation*}

\end{thm}
A consequence of this theorem is the following global statement concerning the `rate' of rational approximation to any real number.
\begin{cor}
 \noindent For any $x\in \R$, there exist infinitely many integers $p$ and $q > 0 $ such that
\begin{equation*}\label{side1}
\left\vert qx-p\right\vert<1/q.
\end{equation*}
\end{cor}

Generalising this corollary leads to the landmark theorems of  Khintchine (1924) and Jarn\'ik (1928), the former about the Lebesgue measure and latter about the Hausdorff measure of the set,
\begin{equation}\label{eqWPsi}
W(\psi):=\left\{x\in[0, 1): \left|x-\frac pq\right|<\psi(q) \ \text{for infinitely many} \ (p, q)\in\Z\times \N \right\}.
\end{equation}
Where $\psi:\N\to \R_+$ is a decreasing function such that $\psi(q)\to 0$ as $q\to \infty$, referred to as an ``approximating function''. We combine both the theorems in one statement below.
\begin{thm}[Khintchine-Jarn\'ik, 1924-1931]\label{KJthm}Let $\psi$ be an approximating function. Then, for any $s\in(0, 1]$,

$$ \H^s\left(W(\psi)\right)=
\left\{
\begin{array}{cl}
0& {\rm \ if}  \qquad\displaystyle \sum_{q=1}^{\infty} \ q\psi^s(q)   < \infty, \, \\ [2ex]
\H^s\left([0, 1)\right)  & { \rm \ if}  \qquad \displaystyle \sum_{q=1}^{\infty} \  q\psi^s(q)   = \infty.
\end{array}
\right.
$$
\end{thm}
Note that $\H^1$ is comparable to the one-dimensional Lebesgue measure (Khintchine's theorem).    In fact both, Khintchine and Jarn\'ik theorems were proved by considering the alternative form of the set $W(\psi)$ in terms of the growth of the partial quotients in the continued fractions. Let
\begin{equation}\label{eqWPsicon}
\mathcal K(\psi):=\left\{x\in[0, 1): a_{n+1}(x)\geq \frac{1}{q_n^2\psi(q_n)} \ \text{for infinitely many} \ n\in \N \right\},
\end{equation}
where $q_n= q_n(x)$ is the denominator of the $n$th convergent $p_n/q_n = [a_1,...,a_n]$ of $x$.  The equivalence of both of the sets \eqref{eqWPsi} and \eqref{eqWPsicon} readily follows from the relation
\begin{equation*}\label{CF1}
\frac{1}{(a_{n+1}+2)q_{n}^{2}}\,<\,\Big|x-\frac{p_{n}}{q_{n}}\Big|<\,\frac{1}{a_{n+1}q_{n}^{2}}.
\end{equation*}
 along with the well-known  Lagrange's and Legendre's theorems.

Building on a work of Davenport-Schmidt \cite{DavenportSchmidt3}, Kleinbock-Wadleigh \cite{KleinbockWadleigh} considered the set \begin{equation*}
D(\psi):=\left\{x\in\mathbb R:   \begin{array}{ll}\exists\, N  \ {\rm such\  that\ the\ system}\ |qx-p|\, <\, \psi(t), \  
 |q|<t\  \\
   \text{has a nontrivial integer solution for all }t>N\quad
                           \end{array}
\right\}.
\end{equation*}
calling it a set of $\psi$-Dirichlet improvable numbers.
A simple calculation shows the following simple yet extremely important criterion. \begin{align*}
x\in D(\psi) &\Longleftrightarrow |q_{n-1}x-p_{n-1}| \,<\, \psi(q_n)  \ {\text{for all}}\ n\gg 1 \\
&\Longleftrightarrow [a_{n+1}, a_{n+2},\dots]\cdot [a_n, a_{n-1},\dots, a_1]\, < \, \frac1{\Phi(q_n)}
  \ {\text{for all}}\ n\gg 1,\end{align*}
where\begin{equation}\label{twopsis}{
\Phi(t):=\frac{t\psi(t)}{1-t\psi(t)} = \frac{1}{1-t\psi(t)} - 1.
}\end{equation}
In what follows, $\psi$ and $\Phi$ will always be related by \eqref{twopsis}. This leads to the following criterian for Dirichlet improvability.

\begin{lem}[Kleinbock-Wadleigh, 2018]\label{kwlem}Let $x\in [0, 1)\smallsetminus\Q$. Then
\begin{itemize}
\item [{\rm (i)}] $x\in D(\psi)$ if $a_{n+1}(x)a_n(x)\, \le\,\Phi(q_n)/4$ for all sufficiently large $n$.
\item [{\rm (ii)}] $x\in D^c(\psi)$  if $a_{n+1}(x)a_n(x)\, >\, \Phi(q_n)$ for infinitely many~$n$.
\end{itemize}

\end{lem}
As a consequence of this lemma, we have
\begin{equation*}\label{l1.2}
\mathcal K(3\psi)\subset G(\Phi) \subset D^c(\psi)\subset G(\Phi/4),\end{equation*}
where
\begin{equation*}\label{gpsi}
G(\Phi):=\Big\{x\in [0,1): a_n(x)a_{n+1}(x)\,>\, \Phi\big(q_n(x)\big) {\text{ for infinitely many}}\ n\in \N\Big\}.
\end{equation*}

The Lebesgue measure and Hausdorff measure of the set $D^c(\psi)$ was proved by Kleinbock-Wadleigh in \cite[Theorem 1.8]{KleinbockWadleigh}  and Hussain-Kleinbock-Wadleigh-Wang \cite{HKWW} respectively.

\begin{thm}[Kleinbock-Wadleigh, 2018]
\label{KW} Let $\psi$ be any non-increasing positive function and  $\Phi$ as in \eqref{twopsis} non-decreasing such that $t\psi(t)<1$  for all  $t$ large enough.
Then
\begin{equation*}\label{hdsum} \H^1(D^c(\psi))=\begin {cases}
 0 \ & {\rm if } \quad\sum\limits_{t}\frac{\log{\Phi}(t)}{t{\Phi}(t)} \, < \, \infty,  \\[2ex]
 1 \ & {\rm if } \quad
\sum\limits_{t}\frac{\log{\Phi}(t)}{t{\Phi}(t)} \, = \, \infty.
\end {cases}\end{equation*}
\end{thm}

\begin{thm}[HKWW, 2018]\label{dicor} Let $\psi$ be a non-increasing positive function with $t\psi(t)<1$ for all large $t$. Then for any $0\leq s<1$
\begin{equation*}\label{hdsum} \H^s(D^c(\psi))=\begin {cases}
 0 \ & {\rm if } \quad \sum\limits_{t} {t}\left(\frac{1}{{t^2\Phi({t})}} \right)^s
\, < \, \infty;  \\[2ex]
 \infty \ & {\rm if } \quad
 \sum\limits_{t} {t}\left(\frac{1}{{t^2\Phi({t})}} \right)^s \, = \, \infty.
\end {cases}\end{equation*}

Consequently, the Hausdorff dimension of the set $D^c(\psi)$ is given by
$$
\hdim D^c(\psi)=\frac{2}{2+\tau}, \ {\text{where}}\
\tau=\liminf_{t\to \infty}\frac{\log \Psi(t)}
{\log t}.
$$

\end{thm}

We refer the reader to \cite{BHS} for a generalized Hausdorff
measure criterion for the set of Dirichlet non-improvable numbers for a large class of dimension functions.

Bearing in mind the close connection of studying the growth of consecutive partial quotients, natural question is to consider the growth of the product of an arbitrary block of consecutive partial quotients. To this end, consider the set
\begin{equation*}\label{mainset}
{\D_{\mathbf t}}(\Psi):=\left\{x\in[0, 1):  \prod_{i=1}^ma^{t_i}_{n+i}(x)\ge \Psi(n) \ {\text{for infinitely many}} \ n\in \N \right\}.
\end{equation*}
Huang-Wu-Xu \cite{HuWuXu} developed the metrical theory for the set $\D_m^{\mathbf 1} (\Psi)$, that is, when $t_1=t_2=\cdots=t_m=1$. They proved the Lebesgue measure and Hausdorff dimension results.

\begin{thm}[Huang-Wu-Xu, 2019]\label{HWXLeb} Let $\Psi:\N\to[2, \infty)$ be a positive function. Then
\begin{equation*}
\mathcal L\left(\mathcal D^\mathbf 1_m(\Psi)\right)=
\begin{cases}
0\  & \mathrm{if}\quad \sum\limits_{n}^\infty\frac {\log^{m-1}\Psi(n)}{\Psi(n)} \,<\,\infty ; \\[2ex]
1 \  & \mathrm{if}\quad \sum\limits_{n}^\infty\frac {\log^{m-1}\Psi(n)}{\Psi(n)}\,=\,\infty .
\end{cases}
\end{equation*}
\end{thm}

Note that Theorem \ref{HWXLeb} follows from our Theorem \ref{mainLeb}  (by choosing $t_1=t_2=\ldots=1$).  Note also that Theorems \ref{KW} (by choosing $m=2, t_1=t_2=1$) is also a consequence of our theorem. This consequence becomes apparent by combining our result with a result of Khintchine that there exists a constant $C>1$ such that for almost all $x\in[0, 1)$, $q_n(x)\leq C^n$ for all $n\gg1$.

The Hausdorff dimension of the set $\mathcal D^\mathbf1_m(\Psi)$ for any $m$ was established by Huang-Wu-Xu \cite{HuWuXu}.

\begin{thm}[Huang-Wu-Xu, 2019]\label{HWXthm}  For any function  $\Psi :\mathbb{N}\rightarrow (1,\infty)$ with ${\displaystyle
\lim_{n\to \infty}} \Psi(n)=\infty$ and $$\log B=\liminf\limits_{n\rightarrow \infty }\frac{\log
\Psi (n)}{n} \ {\rm and} \ \log b=\liminf\limits_{n\rightarrow \infty }\frac{\log
\log \Psi (n)}{n}.$$ Then
\begin{equation*}
\dim _{\mathrm{H}}\mathcal{D}^\mathbf 1_m(\Psi )=\left\{
\begin{array}{ll}1,&
\mathrm{if}\ \  B=1, \\ [3ex]
\inf \{s\geq 0:\mathsf{P}\left( T,-f_m(s)\log B-s\log|T^{\prime
}|\right) \leq 0\}&
\mathrm{if}\ \ 1<B< \infty; \\ [3ex] \frac{1}{1+b} &
\mathrm{if}\ \ B=\infty,
\end{array}
\right.
\end{equation*}
where $f_m$ is given by the following iterative formula

$$f_1(s)=s, \quad f_{k+1}(s)=\frac{sf_k(s)}{1-s+f_k(s)}, \ k\geq 1.$$
\end{thm}}

\section{Preliminaries and auxiliary results}\label{S2}

For completeness we give a brief introduction to Hausdorff measures and dimension.  For further details we refer to the beautiful texts \cite{BernikDodson, Falconer_book}.

\subsection{Hausdorff measure and dimension}\label{HM}\

Let $0<s\in\R^n$ let
$E\subset \R^n$.
 Then, for any $\rho>0$ a countable collection $\{B_i\}$ of balls in
$\R^n$ with diameters $\mathrm{diam} (B_i)\le \rho$ such that
$E\subset \bigcup_i B_i$ is called a $\rho$-cover of $E$.
Let
\[
\H_\rho^s(E)=\inf \sum_i \mathrm{diam}(B_i)^s,
\]
where the infimum is taken over all possible $\rho$-covers $\{B_i\}$ of $E$. It is easy to see that $\H_\rho^s(E)$ increases as $\rho$ decreases and so approaches a limit as $\rho \rightarrow 0$. This limit could be zero or infinity, or take a finite positive value. Accordingly, the \textit{$s$-Hausdorff measure $\H^s$} of $E$ is defined to be
\[
\H^s(E)=\lim_{\rho\to 0}\H_\rho^s(E).
\]
It is easily verified that Hausdorff measure is monotonic and countably sub-additive, and that $\H^s(\varnothing)=0$. Thus it is an outer measure on $\R^n$.
When $s=n$,  $\H^n$ coincides with standard Lebesgue measure on $\R^n$.

For any subset $E$ one can verify that there exists a unique critical value of $s$ at which $\H^s(E)$ `jumps' from infinity to zero. The value taken by $s$ at this discontinuity is referred to as the \textit{Hausdorff dimension of  $E$} and  is denoted by $\hdim E $; i.e.,
\[
\hdim E :=\inf\left\{s\in \R_+\;:\; \H^s(E)=0\right\}.
\] 

Computing Hausdorff dimension of a set is typically accomplished in two steps: obtaining the upper and lower bounds separately.
Upper bounds often can be handled by finding appropriate coverings. When dealing with a limsup set, one 
 {usually applies} the Hausdorff measure version of the famous Borel-Cantelli lemma (see Lemma 3.10 of \cite{BernikDodson}):

\begin{pro}\label{bclem}
    Let $\{B_i\}_{i\ge 1}$ be a sequence of measurable  sets in $\R$ and suppose that,  $$\sum_i \mathrm{diam}(B_i)^s \, < \, \infty.$$ Then  $$\H^s({\limsup_{i\to\infty}B_i})=0.$$
\end{pro}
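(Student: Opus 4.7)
The plan is to exploit the convergence of $\sum_i \mathrm{diam}(B_i)^s$ to make the tail sums arbitrarily small, combined with the observation that, for every $N$, the tail family $\{B_i\}_{i\ge N}$ covers the limsup set. Writing
\[
\limsup_{i\to\infty} B_i \;=\; \bigcap_{N\ge 1}\bigcup_{i\ge N} B_i,
\]
we see at once that $\bigcup_{i\ge N}B_i \supseteq \limsup_{i\to\infty} B_i$ for every $N$, so each tail provides a candidate cover of the limsup set.

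Next I would fix an arbitrary $\rho>0$. Convergence of $\sum_i \mathrm{diam}(B_i)^s$ forces $\mathrm{diam}(B_i)\to 0$, hence there exists $N_0=N_0(\rho)$ such that $\mathrm{diam}(B_i)\le \rho/2$ for all $i\ge N_0$. Since the paper's definition of $\H^s_\rho$ uses balls while the proposition allows arbitrary measurable sets, I would enclose each $B_i$ ($i\ge N_0$, $B_i\ne\emptyset$) in a closed ball $B_i'$ obtained by picking any point $x_i\in B_i$ and taking the ball of radius $\mathrm{diam}(B_i)$ around $x_i$; this yields $B_i\subseteq B_i'$ and $\mathrm{diam}(B_i')\le 2\mathrm{diam}(B_i)\le \rho$. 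The family $\{B_i'\}_{i\ge N}$ is then a legitimate $\rho$-cover of $\limsup_{i\to\infty} B_i$ for every $N\ge N_0$, and therefore
\[
\H^s_\rho\big(\limsup_{i\to\infty} B_i\big) \;\le\; \sum_{i\ge N}\mathrm{diam}(B_i')^s \;\le\; 2^s\sum_{i\ge N}\mathrm{diam}(B_i)^s.
\]
Letting $N\to\infty$ the right-hand side vanishes as the tail of a convergent series, so $\H^s_\rho(\limsup_{i\to\infty} B_i)=0$ for every $\rho>0$, and passing to the limit $\rho\to 0$ in the definition of $\H^s$ gives the claim.

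Honestly there is no serious obstacle here; the argument is a direct unpacking of the definitions of $\limsup$ of sets and of Hausdorff outer measure. The only mild technical subtlety is reconciling the two conventions for covers (balls versus arbitrary sets), which is absorbed into the harmless factor $2^s$ above. If one prefers to avoid even this step, one can instead work with the equivalent definition of $\H^s$ using covers by arbitrary sets, whereupon the estimate becomes $\H^s_\rho(\limsup_{i\to\infty} B_i)\le \sum_{i\ge N}\mathrm{diam}(B_i)^s$ directly.
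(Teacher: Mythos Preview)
Your proof is correct and is the standard argument for this Hausdorff--measure Borel--Cantelli lemma. The paper itself does not supply a proof of this proposition: it is stated with a reference to Lemma~3.10 of Bernik--Dodson \cite{BernikDodson}, so there is nothing substantive to compare against. Your handling of the ball-versus-arbitrary-set convention for covers via the factor $2^s$ is appropriate given the paper's definition of $\H^s_\rho$ in \S\ref{HM}.
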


\subsection{Continued fractions and Diophantine approximation}
 \

Suppose that $x \in [0,1)\smallsetminus \Q$ has continued fraction expansion $x= [a_1, a_2,\dots]$,
where $a_n(x)=\lfloor 1/T^{n-1}(x)\rfloor$
for each $n\ge 1$.
Recall the sequences $p_n= p_n(x)$, $q_n= q_n(x)$ 
defined by the recursive relation $(p_{-1},q_{-1}) = (0,1)$, $(p_{0},q_{0}) = (1,1)$, and
\begin {equation}\label{recu}
p_{n+1}=a_{n+1}(x)p_n+p_{n-1}, \ \
q_{n+1}=a_{n+1}(x)q_n+q_{n-1},\ \  n\geq 0.
\end {equation}
Thus $p_n=p_n(x), q_n=q_n(x)$ are determined by the partial quotients $a_1,\dots,a_n$, so we may write  \linebreak $p_n=p_n(a_1,\dots, a_n)$, $q_n=q_n(a_1,\dots,a_n)$. When it is clear which partial quotients are involved, we denote them by $p_n, q_n$ for simplicity.

For any integer vector $(a_1,\dots,a_n)\in \N^n$ with $n\geq 1$, write
\begin{equation*}\label{cyl}
I_n(a_1,\dots,a_n):=\left\{x\in [0, 1): a_1(x)=a_1, \dots, a_n(x)=a_n\right\}
\end{equation*}
for the corresponding `cylinder of order $n$', i.e.\  the set of all real numbers in $[0,1)$ whose continued fraction expansions begin with $(a_1, \dots, a_n).$
We will use $I_n(x)$ to denote the $n$th order cylinder containing $x$.

We will frequently use the following well known properties of continued fraction expansions.  They are explained in the standard texts \cite{IosKra_book, Khi_63}.

\begin{pro}\label{pp3} For any {positive} integers $a_1,\dots,a_n$, let $p_n=p_n(a_1,\dots,a_n)$ and $q_n=q_n(a_1,\dots,a_n)$ be defined recursively by \eqref{recu}. {Then:}
\begin{enumerate}[label={\rm (\subscript{\rm P}{\arabic*})}]
\item
\begin{eqnarray*}
I_n(a_1,a_2,\dots,a_n)= \left\{
\begin{array}{ll}
         \left[\frac{p_n}{q_n}, \frac{p_n+p_{n-1}}{q_n+q_{n-1}}\right)     & {\rm if }\ \
         n\ {\rm{is\ even}};\\
         \left(\frac{p_n+p_{n-1}}{q_n+q_{n-1}}, \frac{p_n}{q_n}\right]     & {\rm if }\ \
         n\ {\rm{is\ odd}}.
\end{array}
        \right.
\end{eqnarray*}
{\rm Thus, its length is given by}
\begin{equation*}\label{lencyl}
\frac{1}{2q_n^2}\leq |I_n(a_1,\ldots,a_n)|=\frac{1}{q_n(q_n+q_{n-1})}\leq \frac1{q_n^2},
\end{equation*}
{\rm since} $$
 p_{n-1}q_n-p_nq_{n-1}=(-1)^n, \ {\rm for \ all }\ n\ge 1.
 $$

\item For any $n\geq 1$, $q_n\geq 2^{(n-1)/2}$ and
$$
1\le \frac{q_{n+m}(a_1,\dots, a_n, b_1,\dots, b_m)}{q_n(a_1,\dots, a_n)\cdot q_m(b_1,\dots,b_m)}\le 2.
$$
\item $$\prod_{i=1}^na_i\leq q_n\leq \prod_{i=1}^n(a_i+1)\leq 2^n\prod_{i=1}^na_i.$$

\item there exists a constant $K>1$ such that for almost all $x\in [0,1)$, $$
q_n(x)\le K^n, \ {\text{for all $n$ sufficiently large}}.
$$
\end{enumerate}
\end{pro}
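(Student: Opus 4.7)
All four properties are classical, so the task is to organise the short standard arguments; there is no single difficult step. For (P1), I would first establish the determinant identity $p_{n-1}q_n - p_n q_{n-1} = (-1)^n$ by induction on $n$: expanding $p_n = a_n p_{n-1} + p_{n-2}$ and the analogous relation for $q_n$ gives
$$p_n q_{n-1} - p_{n-1} q_n = -(p_{n-1}q_{n-2} - p_{n-2}q_{n-1}),$$
so the sign alternates and the base case can be read off from the initial values. Next I would prove by induction on $n$ that for $t \in [0,1]$,
$$[a_1,\dots,a_n + t] = \frac{p_n + t\, p_{n-1}}{q_n + t\, q_{n-1}}.$$
Specialising at $t = 0$ and $t = 1$ identifies the two endpoints of $I_n$ as $p_n/q_n$ and $(p_n + p_{n-1})/(q_n + q_{n-1})$, and the parity of $n$ decides which endpoint is larger via the sign of the determinant. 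Its length is
$$\frac{|p_{n-1}q_n - p_n q_{n-1}|}{q_n(q_n + q_{n-1})} = \frac{1}{q_n(q_n + q_{n-1})},$$
and the stated two-sided bound follows from $q_n \le q_n + q_{n-1} \le 2 q_n$, where $q_{n-1} \le q_n$ is immediate from the recurrence since $a_n \ge 1$.

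For (P2), iterating $q_{n+1} \ge q_n + q_{n-1} \ge 2 q_{n-1}$ from the base cases $q_1 \ge 1$, $q_2 \ge 2$ yields $q_n \ge 2^{(n-1)/2}$. For the mixed-word bound I would invoke the matrix representation
$$\begin{pmatrix} p_n & p_{n-1} \\ q_n & q_{n-1} \end{pmatrix} = \prod_{i=1}^n \begin{pmatrix} a_i & 1 \\ 1 & 0 \end{pmatrix},$$
so that multiplying the $n$-letter and $m$-letter blocks yields the concatenation identity
$$q_{n+m}(a_1,\dots,a_n,b_1,\dots,b_m) = q_n(a_1,\dots,a_n)\, q_m(b_1,\dots,b_m) + q_{n-1}(a_1,\dots,a_{n-1})\, q_{m-1}(b_2,\dots,b_m).$$
The lower bound $q_{n+m} \ge q_n q_m$ is immediate, and the upper bound $q_{n+m} \le 2 q_n q_m$ follows from $q_{n-1} \le q_n$ and $q_{m-1} \le q_m$. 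For (P3), a one-line induction using $q_n = a_n q_{n-1} + q_{n-2}$ gives $q_n \ge a_n q_{n-1} \ge \prod_i a_i$ and $q_n \le (a_n + 1) q_{n-1} \le \prod_i (a_i + 1)$, and $\prod(a_i + 1) \le 2^n \prod a_i$ since $a_i + 1 \le 2 a_i$ for $a_i \ge 1$.

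Finally, (P4) is a standard consequence of L\'evy's theorem, which states that for Lebesgue-a.e.\ $x \in [0,1)$,
$$\lim_{n \to \infty} \frac{1}{n} \log q_n(x) = \frac{\pi^2}{12 \log 2}.$$
L\'evy's theorem itself follows from applying the Birkhoff ergodic theorem to the Gauss map $T$ with its invariant probability measure $\frac{1}{\log 2}\frac{dx}{1+x}$, combined with the identity $|q_n x - p_n| = |q_{n-1} x - p_{n-1}|\cdot |T^n(x)|$ derived from $x = (p_n + p_{n-1} T^n(x))/(q_n + q_{n-1} T^n(x))$. Once the limit is known, any $K > \exp\bigl(\pi^2/(12 \log 2)\bigr)$ satisfies $q_n(x) \le K^n$ for all sufficiently large $n$ almost surely, which is exactly the claim. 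The only non-elementary input anywhere in the proposition is L\'evy's theorem, so that is the closest thing to an obstacle; everything else is bookkeeping with the recurrence.
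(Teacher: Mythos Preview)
The paper does not give a proof of this proposition at all: it simply states that these are ``well known properties of continued fraction expansions'' and refers the reader to the standard texts of Iosifescu--Kraaikamp and Khintchine. Your proposal is therefore not competing with any argument in the paper; rather, it is a correct and efficient write-up of exactly the classical proofs one finds in those references. The induction for the determinant identity, the parametrisation $[a_1,\dots,a_n+t]=(p_n+tp_{n-1})/(q_n+tq_{n-1})$ for (P1), the matrix concatenation for (P2), the one-line induction for (P3), and the appeal to L\'evy's theorem via Birkhoff for (P4) are all standard and sound. One small remark on (P2): in your concatenation identity the second term involves $q_{m-1}(b_2,\dots,b_m)$, which is not literally ``$q_{m-1}\le q_m$'' in the same recursion; the cleanest way to bound it by $q_m(b_1,\dots,b_m)$ is to note that $q_{m-1}(b_2,\dots,b_m)=p_m(b_1,\dots,b_m)\le q_m(b_1,\dots,b_m)$ since $p_m/q_m\in[0,1]$. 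With that adjustment your argument goes through verbatim.
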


Let $\mu_G$ be the Gauss measure given by $$
d\mu_G=\frac{1}{\log 2}\cdot \frac{1}{(1+x)}\,dx.
$$ It is known that $\mu_G$ is $T$-invariant; clearly it is equivalent to Lebesgue measure $\mathcal{L}$.

The next proposition concerns the position of a cylinder in $[0,1)$.
\begin{pro}[Khintchine, 1963]\label{pp2} Let $I_n=I_n(a_1,\dots, a_n)$ be a cylinder of order $n$, which is partitioned into sub-cylinders $\{I_{n+1}(a_1,\dots,a_n, a_{n+1}): a_{n+1}\in \N\}$. When $n$ is odd, these sub-cylinders are positioned from left to right, as $a_{n+1}$ increases from 1 to $\infty$; when $n$ is even, they are positioned from right to left.
\end{pro}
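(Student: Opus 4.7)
The plan is to use the monotonicity of the Gauss map on each of its branches and then pull back to $I_n$. On $I_1(k)=\bigl(\tfrac{1}{k+1},\tfrac1k\bigr]$ the map $T(x)=\tfrac1x-k$ is a strictly decreasing bijection onto $[0,1)$, so each branch of $T$ reverses orientation. An easy induction on $n$ then shows that $T^n$ restricts to a strictly monotone bijection from $I_n(a_1,\dots,a_n)$ onto $[0,1)$ whose orientation is reversed exactly when $n$ is odd.

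Next observe that $x\in I_{n+1}(a_1,\dots,a_n,a_{n+1})$ if and only if $x\in I_n(a_1,\dots,a_n)$ and $T^n(x)\in I_1(a_{n+1})$. Since the intervals $I_1(a_{n+1})=\bigl(\tfrac{1}{a_{n+1}+1},\tfrac{1}{a_{n+1}}\bigr]$ collapse toward $0$ as $a_{n+1}$ grows, they are arranged from right to left in $(0,1]$ as $a_{n+1}$ increases from $1$ to $\infty$. Pulling this ordering back by $T^n$ preserves it when $n$ is even (giving the right-to-left arrangement inside $I_n$) and reverses it when $n$ is odd (giving the left-to-right arrangement inside $I_n$), which is exactly the claim.

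The main obstacle, if it deserves the name, is the consistent bookkeeping of parities; no input deeper than Proposition~\ref{pp3} is needed. A direct computational alternative is to differentiate the M\"obius function $a_{n+1}\mapsto\tfrac{a_{n+1}p_n+p_{n-1}}{a_{n+1}q_n+q_{n-1}}$ in the real variable $a_{n+1}$, obtaining $\tfrac{(-1)^{n+1}}{(a_{n+1}q_n+q_{n-1})^2}$ via the identity $p_{n-1}q_n-p_nq_{n-1}=(-1)^n$ recorded in Proposition~\ref{pp3}; the sign of this derivative, combined with the parity-dependent choice of left or right endpoint in the cylinder formula, reproduces the stated ordering. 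One should also note that the ``other'' endpoint of $I_{n+1}(\dots,a_{n+1})$ equals $\tfrac{(a_{n+1}+1)p_n+p_{n-1}}{(a_{n+1}+1)q_n+q_{n-1}}$, so consecutive sub-cylinders abut, confirming that the indicated ordering is a genuine left-to-right or right-to-left arrangement.
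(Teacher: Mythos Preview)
Your argument is correct: both the dynamical approach (tracking the orientation of $T^n$ on $I_n$) and the computational alternative via the sign of $\tfrac{d}{da_{n+1}}\tfrac{a_{n+1}p_n+p_{n-1}}{a_{n+1}q_n+q_{n-1}}=\tfrac{(-1)^{n+1}}{(a_{n+1}q_n+q_{n-1})^2}$ cleanly establish the stated ordering.

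As for comparison with the paper: there is nothing to compare against. The paper does not supply a proof of this proposition; it is stated as a classical fact attributed to Khintchine's 1963 book and used as a black box (chiefly in the gap estimate, Lemma~\ref{l3}). Your write-up therefore goes beyond what the paper does, and either of your two routes would serve as a self-contained justification. The second, computational route has the minor advantage that it draws only on the explicit endpoint formulas and the identity $p_{n-1}q_n-p_nq_{n-1}=(-1)^n$ already recorded in Proposition~\ref{pp3}, so it fits seamlessly into the paper's existing framework.
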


The following result is due to {\L}uczak \cite{Luczak}.
\begin{lem}[{\L}uczak, 1997]\label{lemb}For any $b, c>1$, the sets
\begin{align*}
&\left\{x\in[0, 1):  a_{n}(x)\ge c^{b^n}\  {\text{for infinitely many}} \ n\in \N       \right\},\\
&\left\{x\in[0, 1):  a_{n}(x)\ge c^{b^n}\  {\text{for all }} \ n\geq 1 \right\},
\end{align*}
have the same Hausdorff dimension $\frac1{b+1}$.
\end{lem}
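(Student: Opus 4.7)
The plan is to establish the two inequalities $\hdim F_1 \le \tfrac{1}{b+1}$ and $\hdim F_2 \ge \tfrac{1}{b+1}$, where $F_1$ and $F_2$ denote the ``infinitely often'' and ``for all $n$'' sets respectively. Since $F_2 \subset F_1$, combining these yields the common value.

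The upper bound on $\hdim F_1$ is immediate from Theorem~\ref{WaWu} applied to $\Psi(n) := c^{b^n}$: here $\log B = \lim_{n\to\infty} b^n \log c/n = \infty$ and $\lim_{n\to\infty} \log\log \Psi(n)/n = \log b$, so Wang--Wu yields $\hdim \mathcal E(\Psi) = 1/(1+b)$, and $F_1 = \mathcal E(\Psi)$ by definition.

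For the lower bound I would construct a Cantor-type subset $\mathcal C \subset F_2$ and apply the mass distribution principle. Take $A_n := \N \cap [c^{b^n}, 2c^{b^n})$, so $\#A_n \asymp c^{b^n}$, and set
\[
\mathcal C := \bigl\{x \in [0,1) : a_n(x) \in A_n \text{ for all } n \ge 1\bigr\} \subset F_2.
\]
By Proposition~\ref{pp3}(P1) and (P3), on $\mathcal C$ we have $q_n \asymp \prod_{i \le n} c^{b^i} = c^{(b^{n+1}-b)/(b-1)}$ and $|I_n(a_1,\dots,a_n)| \asymp c^{-2b^{n+1}/(b-1)}$. Equip $\mathcal C$ with the uniform measure $\mu$ assigning mass $\prod_{i \le n}(\#A_i)^{-1}$ to each admissible cylinder of order $n$. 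Proposition~\ref{pp2} tells us that for each admissible $(a_1,\dots,a_{n-1})$ the $\#A_n$ admissible level-$n$ sub-cylinders of $I_{n-1}$ pack into a single contiguous block abutting one endpoint of $I_{n-1}$, of total length $L_n \asymp |I_{n-1}|/c^{b^n}$. For $s < 1/(b+1)$, I would verify $\mu(B(x,r)) \le C r^s$ by splitting into the cases $r < |I_n|$, $|I_n| \le r < L_n$, and $L_n \le r < |I_{n-1}|$. All three cases reduce, at their worst radius, to the single binding inequality $\mu(I_{n-1}) \le C L_n^s$, which rearranges to $s \le \log \prod_{i \le n-1}\#A_i \,/\, \log L_n^{-1}$; a direct computation shows this ratio tends to $1/(b+1)$ as $n \to \infty$, so MDP yields $\hdim \mathcal C \ge 1/(b+1)$.

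The principal obstacle is the mass-distribution case analysis: because the admissible level-$n$ cylinders cluster into the single block of length $L_n$ rather than spreading uniformly across $I_{n-1}$, $\mathcal C$ is not self-similar, and each scale regime must be handled separately. Observe that a naive matching of $\log \prod_{i \le n}\#A_i$ against $\log |I_n|^{-1}$ would wrongly produce dimension $1/2$; the sharper value $1/(b+1) < 1/2$ (for $b > 1$) emerges precisely because the effective separating scale at level $n$ is the block length $L_n$, not the individual cylinder length $|I_n|$.
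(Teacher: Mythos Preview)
The paper does not prove this lemma; it is quoted from {\L}uczak \cite{Luczak} (with the simplified lower-bound argument of \cite{MR1464376} also referenced) as a preliminary fact, so there is no in-paper proof to compare your proposal against.

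Your outline is essentially the standard route and is correct. One caution on the upper bound: you invoke Theorem~\ref{WaWu} (Wang--Wu, 2008), whose $B=\infty$ clause \emph{is} precisely the ``infinitely often'' half of the present lemma. That theorem postdates {\L}uczak and, in its $B=\infty$ case, may simply quote {\L}uczak rather than re-prove it; so appealing to it here is at best anachronistic and potentially circular. Within the logic of this particular paper both results are imported as black boxes, so the dependency is harmless for the paper's own purposes --- but if you want a self-contained argument you should replace this step by a direct covering estimate (which is the nontrivial half of {\L}uczak's original note; the naive cylinder cover does not converge for $s<1/2$).

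Your lower bound is correct and cleanly isolates the key phenomenon: the admissible level-$n$ cylinders occupy a contiguous block of length $L_n\asymp |I_{n-1}|/c^{b^n}$ rather than spreading across all of $I_{n-1}$, so the critical comparison scale is $L_n$, not $|I_n|$, and this is exactly what pushes the exponent from $1/2$ down to $1/(b+1)$. The three-case split is slightly redundant --- once $n$ is fixed by $|I_n|\le r<|I_{n-1}|$ only the two subcases $r<L_n$ and $r\ge L_n$ remain, your ``$r<|I_n|$'' case being absorbed by passing to level $n+1$ --- but the computation that the binding inequality $\mu(I_{n-1})\ll L_n^s$ forces $s\le 1/(b+1)$ is right.
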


\subsection{Pressure function and Hausdorff dimension} \label{Pressure Functions}\

In this section, we recall a fact that the pressure function with a continuous potential can be approximated by the pressure functions restricted to
the sub-systems in continued fractions. For more thorough results on pressure function in infinite conformal iterated function systems, see Hanus-Mauldin-Urba\'{n}ski \cite{H02}, Mauldin-Urba\'{n}ski \cite{M96,MU99}, or their monograph \cite{MU03}.

Let $\A$ be a finite or infinite subset of $\N$. Define
$$X_{\A}=\{x\in [0,1):\ a_n(x)\in \A,\ \text{for all} \ n\geq 1\}.$$
Then $(X_{\A},T)$ is a sub-system of $([0,1),T).$ Let $\phi: [0,1)\rightarrow \R$ be a real function. The pressure function restricted to the system $(X_{\A},T)$ with the potential $\phi$ is defined by
\begin{equation}\label{pre}P_{\A}(T, \phi)=\lim_{n\to \infty}\frac{1}{n}\log \sum_{(a_1,\ldots,a_n)\in \A^n}\sup_{x\in X_{\A}}e^{S_n\phi([a_1,\ldots,a_n+x])} \ ,\end{equation}
 where $S_n\phi(x)$ denotes the ergodic sum $\phi(x)+\cdots+\phi(T^{n-1}x)$. When $\A=\N$, we denote $P_{\N}(T,\phi)$ by $P(T,\phi)$, which is the pressure function that appeared in the introduction.

We will also use the notation  $$
{\text{Var}}_n(\phi):=\sup\big\{|\phi(x)-\phi(y)|:\ I_n(x)=I_n(y)\big\}
$$ for the $n$th variation of $\phi.$

The existence of the limit in the definition of the pressure function \eqref{pre} is guaranteed by the following proposition \cite{M96}.
\begin{pro}[Mauldin-Urba\'nski, 1999]\label{ppp3} Let $\phi: [0,1)\rightarrow\R$ be a real function with ${\text{Var}}_1(\phi)<\infty$ and ${\text{Var}}_n(\phi)\rightarrow 0$
as $n\rightarrow \infty$. Then the limit defining $P_A(T,\phi)$ exists, and the value of $P_A(T,\phi)$ remains the same even without taking supremum over $x\in X_A$ in \eqref{pre}.
\end{pro}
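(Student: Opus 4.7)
\emph{Proof plan.} The plan is to handle the claim in two stages. First, I will prove that the sequence $\log Z_n$ is subadditive, where
$$Z_n:=\sum_{(a_1,\ldots,a_n)\in A^n}\sup_{x\in X_A}e^{S_n\phi([a_1,\ldots,a_n+x])}$$
denotes the sum appearing in \eqref{pre}; the existence of $\lim\frac1n\log Z_n$ will then follow from Fekete's lemma. Second, I will use the hypothesis ${\text{Var}}_n(\phi)\to 0$ to show that swapping the supremum for any pointwise evaluation does not affect the exponential growth rate.

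For the subadditivity step, fix a word $(a_1,\ldots,a_{n+m})\in A^{n+m}$ and $x\in X_A$, and set $y:=[a_1,\ldots,a_{n+m}+x]$. Then $y\in I_n(a_1,\ldots,a_n)\cap X_A$ and $T^n y=[a_{n+1},\ldots,a_{n+m}+x]\in I_m(a_{n+1},\ldots,a_{n+m})\cap X_A$. The cocycle identity $S_{n+m}\phi(y)=S_n\phi(y)+S_m\phi(T^n y)$ combined with the elementary bound $\sup_x(fg)\le(\sup_x f)(\sup_x g)$ for nonnegative $f,g$ gives
$$\sup_{x\in X_A}e^{S_{n+m}\phi(y)}\le\Bigl(\sup_{y'\in I_n(a_1,\ldots,a_n)\cap X_A}e^{S_n\phi(y')}\Bigr)\Bigl(\sup_{y''\in I_m(a_{n+1},\ldots,a_{n+m})\cap X_A}e^{S_m\phi(y'')}\Bigr).$$
Summing over all $(a_1,\ldots,a_{n+m})\in A^{n+m}$ factors the double sum and produces $Z_{n+m}\le Z_n Z_m$. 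Fekete's lemma then yields that $\frac1n\log Z_n$ converges to $\inf_{n\ge 1}\frac1n\log Z_n$ in $[-\infty,+\infty]$, which is the required existence.

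For the second stage, I observe that if $x,y$ both lie in $I_n(a_1,\ldots,a_n)$, then the iterates $T^k x$ and $T^k y$ share the partial quotients $a_{k+1},\ldots,a_n$ for $0\le k<n$, so they lie in a common cylinder of order $n-k$. Consequently
$$|S_n\phi(x)-S_n\phi(y)|\le\sum_{k=0}^{n-1}{\text{Var}}_{n-k}(\phi)=\sum_{j=1}^n{\text{Var}}_j(\phi),$$
and since ${\text{Var}}_j(\phi)\to 0$, the Cesaro average $\frac1n\sum_{j=1}^n{\text{Var}}_j(\phi)$ tends to $0$. Letting $Z_n'$ denote the sum \eqref{pre} with the supremum replaced by evaluation at any chosen representative of $I_n(a_1,\ldots,a_n)\cap X_A$, the chain
$$Z_n'\le Z_n\le \exp\Bigl(\sum_{j=1}^n{\text{Var}}_j(\phi)\Bigr)\cdot Z_n'$$
shows that $\frac1n\log Z_n$ and $\frac1n\log Z_n'$ share the same limit, which is the second assertion.

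The main subtlety is of a bookkeeping nature rather than a conceptual one: when $A$ is infinite, nothing in the hypotheses forces the series defining $Z_n$ to converge, so the limit must be interpreted in the extended real line $[-\infty,+\infty]$. The hypothesis ${\text{Var}}_1(\phi)<\infty$ only controls oscillation on first-order cylinders and does not prevent $\phi$ from being unbounded on $[0,1)$. Both the subadditive inequality $Z_{n+m}\le Z_nZ_m$ and the variation comparison $Z_n\le e^{o(n)}Z_n'$ nevertheless carry through in the extended real line, so Fekete's lemma still delivers the existence claim and the supremum-removal argument is unaffected.
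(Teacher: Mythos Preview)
Your argument is correct and is in fact the standard proof of this result: subadditivity of $\log Z_n$ via the cocycle identity $S_{n+m}\phi = S_n\phi + S_m\phi\circ T^n$, followed by Fekete's lemma, and then the Ces\`aro estimate on $\sum_{j\le n}\mathrm{Var}_j(\phi)$ to remove the supremum. Your handling of the case $Z_n=+\infty$ is also appropriate.

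There is nothing to compare, however, because the paper does not give its own proof of this proposition. It is stated with attribution to Mauldin--Urba\'nski and cited from \cite{M96}; the surrounding text simply says that the existence of the limit in \eqref{pre} ``is guaranteed by the following proposition'' and then invokes it as a black box. So your write-up supplies a proof where the paper chose to quote the literature, and it does so along exactly the lines one finds in the sources cited (e.g.\ \cite{M96,H02,LWWX}).
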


Henceforth, without causing any confusion, when we need to take a point $y$ from a cylinder $I_n(a_1,\ldots,a_n),$ we always
take it as $y=p_n/q_n=[a_1,\ldots,a_n].$ Because all the potentials in the sequel satisfy the condition in Proposition \ref{pp3}, the pressure function can be expressed as
$$P_\A(T, \phi)=\lim_{n\to \infty}\frac{1}{n}\log \sum_{(a_1,\ldots,a_n)\in \A^n}e^{S_n\phi([a_1,\ldots,a_n])} .$$

The following proposition states that in the system of continued fractions the pressure function has a continuity property when the system $\big([0,1),T)$ is approximated by its sub-systems $(X_{A},T).$ For the proof, see \cite{H02} or \cite{LWWX}.
\begin{pro}[Hanus-Mauldin-Urba\'nski, 2002]\label{pp4}~Let $\phi:[0,1)\rightarrow \mathbb{R}$ be a real function with ${\text{Var}}_1(\phi)<\infty$ and ${\text{Var}}_n(\phi)\rightarrow 0$ as $n\rightarrow \infty.$ Then
\begin{enumerate}
  \item[\rm (1)] for any $a\in \mathbb{R}$ and $\A\subset \mathbb{N}, P_{\A}(T,\phi+a)=P_{\A}(T,\phi)+a;$
  \smallskip
  
  \item[\rm (2)]  $P(T,\phi)=P_{\mathbb{N}}(T,\phi)=\sup\{P_{\A}(T,\phi): \A ~\text{is a finite subset of}~ \mathbb{N}\}.$
\end{enumerate}
\end{pro}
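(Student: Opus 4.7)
The plan is to treat the two parts separately. Part (1) is essentially immediate, while Part (2) requires approximating the full alphabet by finite ones with a limiting argument. For Part (1), since $S_n(\phi+a)(x) = S_n\phi(x) + na$, the partition function factors as
\[
\sum_{(a_1,\ldots,a_n)\in\A^n} e^{S_n(\phi+a)([a_1,\ldots,a_n])} = e^{na}\sum_{(a_1,\ldots,a_n)\in\A^n} e^{S_n\phi([a_1,\ldots,a_n])},
\]
and applying $\tfrac{1}{n}\log$ and passing to the limit yields $P_\A(T,\phi+a) = P_\A(T,\phi) + a$.

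For Part (2), set $Z_n(\A) := \sum_{(a_1,\ldots,a_n)\in\A^n} e^{S_n\phi([a_1,\ldots,a_n])}$. The inequality $\sup\{P_\A(T,\phi):\A\text{ finite}\}\le P_\N(T,\phi)$ is immediate since $\A\subset\N$ forces $Z_n(\A)\le Z_n(\N)$ termwise. For the reverse direction I would write $\A_M=\{1,\ldots,M\}$ and show $P_{\A_M}(T,\phi)\to P_\N(T,\phi)$ as $M\to\infty$. Fix $c < P_\N(T,\phi)$ and choose $n_0$ with $\tfrac{1}{n_0}\log Z_{n_0}(\N) > c$. Since $Z_{n_0}(\N)$ is a sum of nonnegative terms over a countable index set, monotone convergence gives $Z_{n_0}(\A_M)\nearrow Z_{n_0}(\N)$ as $M\to\infty$, so for all sufficiently large $M$ we also have $\tfrac{1}{n_0}\log Z_{n_0}(\A_M) > c$.

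The main obstacle is boosting this single-scale estimate at length $n_0$ to a lower bound on $\liminf_{N\to\infty}\tfrac{1}{N}\log Z_N(\A_M)$. To do this I would use quasi-multiplicativity of the partition functions: concatenating words $(a_1,\ldots,a_{n_0})$ and $(b_1,\ldots,b_{n_0})$ produces a word in $\A_M^{2n_0}$ on which $S_{2n_0}\phi$ evaluated at the canonical point $[a_1,\ldots,a_{n_0},b_1,\ldots,b_{n_0}]$ differs from $S_{n_0}\phi([a_1,\ldots,a_{n_0}])+S_{n_0}\phi([b_1,\ldots,b_{n_0}])$ by an error controlled uniformly by $\sum_{j\ge 1}\mathrm{Var}_j(\phi)$. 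The hypotheses $\mathrm{Var}_1(\phi)<\infty$ and $\mathrm{Var}_n(\phi)\to 0$ are precisely what make this error an $o(k)$ term when the concatenation is iterated $k$ times, so that $Z_{kn_0}(\A_M)\ge e^{-o(k)}\,Z_{n_0}(\A_M)^k$. Hence $\liminf_N \tfrac{1}{N}\log Z_N(\A_M)\ge \tfrac{1}{n_0}\log Z_{n_0}(\A_M) > c$, giving $P_{\A_M}(T,\phi)\ge c$; letting $c\nearrow P_\N(T,\phi)$ finishes the proof. The delicate point is the uniform-in-$k$ control of the boundary errors coming from the variation hypothesis; once that is in hand, the remainder is standard Fekete-style subadditive machinery.
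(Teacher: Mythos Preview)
The paper does not prove this proposition; it is quoted from \cite{H02} (see also \cite{LWWX}), so there is no in-paper argument to compare with. Your outline is the standard one and is essentially correct, but the quasi-multiplicativity step contains an inaccuracy that, as written, does not close.

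Concretely, with $x=[w_1,\dots,w_k]$ and $w_i\in\A_M^{n_0}$, one has $T^{(i-1)n_0+j}x$ and $T^j[w_i]$ lying in the \emph{same} cylinder of order $n_0-j$, so
\[
\Big|S_{kn_0}\phi(x)-\sum_{i=1}^k S_{n_0}\phi([w_i])\Big|\ \le\ k\sum_{l=1}^{n_0}\text{Var}_l(\phi).
\]
This error is $O(k)$, not $o(k)$; moreover your stated bound $\sum_{j\ge 1}\text{Var}_j(\phi)$ may diverge under the hypotheses (e.g.\ $\text{Var}_n(\phi)=1/n$). Consequently the displayed inequality only yields
\[
P_{\A_M}(T,\phi)\ \ge\ \frac{1}{n_0}\log Z_{n_0}(\A_M)\ -\ \frac{1}{n_0}\sum_{l=1}^{n_0}\text{Var}_l(\phi),
\]
which falls short of $>c$ for an arbitrary fixed $n_0$.

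The fix is minor: since $\text{Var}_n(\phi)\to 0$, the Ces\`aro average $\frac{1}{n_0}\sum_{l=1}^{n_0}\text{Var}_l(\phi)$ tends to $0$. So first choose $n_0$ so large that both $\frac{1}{n_0}\log Z_{n_0}(\N)>c$ \emph{and} $\frac{1}{n_0}\sum_{l=1}^{n_0}\text{Var}_l(\phi)<\epsilon$; then choose $M$ via monotone convergence. This gives $P_{\A_M}(T,\phi)>c-\epsilon$, and letting $\epsilon\to 0$ and $c\nearrow P_\N(T,\phi)$ finishes the argument. With this correction your proof goes through.
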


Now we specify the potential $\phi$ which will be related to the dimension of the set ${\D_{\mathbf t}}(\Psi)$ when $\Psi(n)=B^n$ for all $n\ge 1$.

Let the function $f_{{t_0,t_1}}$ be as in \eqref{ft1t2}.
Then for any $s\ge 0$, take the potential as $$
\psi(x)=-s\log |T'(x)|-f_{{t_0,t_{1}}}(s)\log B.
$$

For any subset $\A\subset \N,$ ~define
\begin{align*}
 & s^{(2)}(\A, B)=\inf\Big\{s\geq 0:P_{\A}(T,-s\log |T'(x)|-f_{{t_0, t_{1}}}(s)\log B)\leq 0\Big\}, \label{pre4}\\
&s^{(2)}_{n}(\A, B)=\inf\Big\{s\geq 0:\sum\limits_{a_1,\ldots,a_{n}\in \A}
\left(\frac{1}{B^{nf_{{t_0,t_{1}}}(s)}}\right)\left(\frac{1}{q_n^{2}(y)}\right)^s\leq 1\Big\},
\end{align*}
where $y\in I_n(a_1,\ldots,a_n).$  If $\A$ is a finite subset of $\N$, when substitute $s$ by $s^{(2)}(\A, B)$ in the pressure function $P_{\A}$ above (or respectively $s^{(2)}_{n}(\A, B)$ in the summation), we will get an equality.

For simplicity, \begin{itemize}\item when $\A=\N$, write $s^{(2)}(B)$ for $s^{(2)}(\N, B)$ and $s^{(2)}_{n}(B)$ for $s^{(2)}_{n}(\N, B)$;

\item when $\A=\{1, 2, \dots, M\}$ for some integer $M\ge 1$, write them as ~$s^{(2)}(M, B)$ and $s^{(2)}_{n}(M, B)$ respectively.
\end{itemize}

Applying Proposition \ref{pp4}(2) to the potential $\psi,$ one has
\begin{cor}\label{cor5}
$$s^{(2)}(B)=s^{(2)}(\N, B)=\sup\{s^{(2)}(\A, B): \A ~\text{is a finite subset of}~ \mathbb{N}\}.$$
\end{cor}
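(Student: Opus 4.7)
The plan is to apply Proposition \ref{pp4}(2) to the one-parameter family of potentials
$$\psi_s(x) \;:=\; -s\log|T'(x)| - f_{{t_0,t_1}}(s)\log B, \qquad s\ge 0,$$
and then to exchange the infimum over $s$ with the supremum over finite $\A\subset\N$. For each fixed $s$, $\psi_s$ differs from the geometric potential $-s\log|T'|$ only by a constant, so ${\text{Var}}_1(\psi_s)<\infty$ and ${\text{Var}}_n(\psi_s)\to 0$, and Proposition \ref{pp4}(2) yields
\begin{equation}\label{eqsupcor}
P(T,\psi_s) \;=\; \sup\bigl\{P_{\A}(T,\psi_s): \A \subset \N \text{ finite}\bigr\} \qquad \text{for every } s\ge 0.
\end{equation}

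Set $\sigma := s^{(2)}(\N,B)$ and $\tau := \sup\{s^{(2)}(\A,B): \A\text{ finite}\}$; the goal is $\sigma=\tau$. The inequality $\sigma\ge\tau$ is immediate from \eqref{eqsupcor}: for every $s>\sigma$ and every finite $\A$ one has $P_{\A}(T,\psi_s)\le P(T,\psi_s)\le 0$, hence $s\ge s^{(2)}(\A,B)$ for every finite $\A$, so $s\ge\tau$; letting $s\searrow\sigma$ yields $\sigma\ge\tau$.

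For the reverse inequality, I would fix any $s>\tau$, so that $s>s^{(2)}(\A,B)$ for every finite $\A$. The key step is to deduce $P_{\A}(T,\psi_s)\le 0$ from this. Once that is in hand, \eqref{eqsupcor} immediately gives $P(T,\psi_s)\le 0$, hence $\sigma\le s$, and letting $s\searrow\tau$ completes the argument. To make this step work I would use that, for finite $\A$, the map $s'\mapsto P_{\A}(T,\psi_{s'})$ is continuous and non-increasing: continuity is standard pressure theory for a finite-alphabet system with a H\"older potential, and monotonicity follows from the uniform positivity of the Lyapunov exponent on $X_\A$ (so that $P_{\A}(T,-s'\log|T'|)$ is strictly decreasing in $s'$), together with the fact that $f_{{t_0,t_1}}$ is non-decreasing on $[0,1]$, as a direct inspection of \eqref{ft1t2} confirms.

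The only non-formal step, and therefore the main obstacle, is the monotonicity check for $f_{{t_0,t_1}}$: the formula \eqref{ft1t2} splits according to which of the two inner expressions realizes the maximum in the denominator, so one must verify monotonicity in each branch and continuity across the crossover threshold. Apart from this bookkeeping, the entire corollary is a routine interchange of $\inf_s$ and $\sup_{\A}$ driven by Proposition \ref{pp4}(2).
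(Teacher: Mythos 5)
Your argument is correct and takes essentially the same route as the paper, which deduces Corollary \ref{cor5} from Proposition \ref{pp4}(2) in a single line; you are filling in the implicit $\sup$/$\inf$ interchange and the monotonicity facts (pressure decreasing in $s$, $f_{t_0,t_1}$ non-decreasing on $[0,1]$) that the paper takes for granted. One small streamlining: the direction $\sigma\ge\tau$ needs no monotonicity at all, since for any $s$ with $P(T,\psi_s)\le 0$ one has $P_{\A}(T,\psi_s)\le 0$ for every finite $\A$, hence $s^{(2)}(\A,B)\le s$, and taking $\sup_{\A}$ then $\inf_s$ gives $\tau\le\sigma$ outright.
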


Then it follows from the definition of pressure function and Corollary \ref{cor5} that

\begin{pro}\label{pp6}~For any $M\in \mathbb{N},$~we have
$$\lim\limits_{n\rightarrow \infty}s^{(2)}_{n}(M, B)=s^{(2)}(M, B),~~\lim\limits_{n\rightarrow \infty}s^{(2)}_{n}(B)=s^{(2)}(B),
~~\lim\limits_{M\rightarrow \infty}s^{(2)}(M, B)=s^{(2)}(B).$$

\end{pro}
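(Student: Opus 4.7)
The plan is to identify $s^{(2)}_n(\A,B)$ as the $n$-th level approximation to the critical exponent $s^{(2)}(\A,B)$ and then pass to the limit using continuity of the pressure function in the parameter $s$, combining this with Corollary \ref{cor5} for the third limit. For any $\A \subset \N$, set
$$G_\A(s) := P_\A\bigl(T,\, -s\log|T'| - f_{t_0,t_1}(s)\log B\bigr).$$
Using $|(T^n)'([a_1,\ldots,a_n])| = q_n^2(a_1,\ldots,a_n)$ together with Proposition \ref{ppp3}, we can rewrite this as
$$G_\A(s) = \lim_{n\to\infty}\frac{1}{n}\log \sum_{(a_1,\ldots,a_n)\in\A^n}\frac{1}{B^{n f_{t_0,t_1}(s)}\,q_n^{2s}(a_1,\ldots,a_n)},$$
so that $s^{(2)}(\A,B)$ is the unique zero of $G_\A$, while $s^{(2)}_n(\A,B)$ is the value at which the $n$-th term of this limit equals zero.

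For the first two limits (corresponding to $\A = \{1,\ldots,M\}$ and $\A = \N$ respectively), the strategy is a standard pinching argument based on continuity and strict monotonicity of $G_\A$. By Proposition \ref{pp4}(1), $G_\A(s) = P_\A(T, -s\log|T'|) - f_{t_0,t_1}(s)\log B$, and continuity of $s \mapsto P_\A(T, -s\log|T'|)$ is a standard fact of thermodynamic formalism for the Gauss map (elementary when $\A$ is finite, and for $\A = \N$ it holds in the range where the pressure is finite). Combined with continuity of $f_{t_0,t_1}$ (as the maximum of two continuous functions), this yields continuity of $G_\A$, while strict monotonicity follows from positivity of $\log|T'|$ and the non-decreasing property of $f_{t_0,t_1}$. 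Thus for every $\epsilon > 0$ one has $G_\A(s^{(2)}(\A,B) - \epsilon) > 0 > G_\A(s^{(2)}(\A,B) + \epsilon)$, and pointwise convergence of the $n$-th term of the sequence defining $G_\A$ forces $s^{(2)}_n(\A,B) \in (s^{(2)}(\A,B) - \epsilon,\, s^{(2)}(\A,B) + \epsilon)$ for all large $n$.

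For the third limit, observe that the series defining $G_{\{1,\ldots,M\}}$ is non-decreasing in $M$ and converges pointwise to the series defining $G_\N$; this yields both $s^{(2)}(M,B) \le s^{(2)}(B)$ and monotonicity of $M \mapsto s^{(2)}(M,B)$. On the other hand, any finite $\A \subset \N$ is contained in $\{1,\ldots,M\}$ for $M$ large enough, so $s^{(2)}(\A,B) \le \lim_M s^{(2)}(M,B)$; taking the supremum over all finite $\A$ and invoking Corollary \ref{cor5} gives the reverse inequality $s^{(2)}(B) \le \lim_M s^{(2)}(M,B)$.

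The main obstacle is ensuring the required regularity of $G_\N$ -- finiteness, continuity and strict monotonicity near its zero -- since in infinite conformal iterated function systems the pressure can in principle exhibit phase transitions. In the present setting one expects $s^{(2)}(B) > 1/2$ (inherited, for example, from the inclusion $\D_\mathbf{t}(\Psi) \supset \mathcal E(\Psi^{1/t_0})$ together with Theorem \ref{WaWu}), and for $s > 1/2$ the series $\sum q_n^{-2s}$ converges absolutely, so standard thermodynamic formalism for the Gauss map applies and supplies all the required regularity.
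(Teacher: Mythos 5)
Your argument is correct and supplies exactly the details the paper leaves implicit: the text gives no proof of this proposition, asserting only that it ``follows from the definition of pressure function and Corollary~\ref{cor5},'' and the pinching argument you describe (continuity and strict monotonicity of $s\mapsto P_{\A}(T,-s\log|T'|-f_{t_0,t_1}(s)\log B)$, together with pointwise convergence of the level-$n$ approximants $g_n$) plus the monotone-in-$\A$ reduction to Corollary~\ref{cor5} for the third limit is the standard way to make that remark rigorous. Two small points worth tightening: (i) $f_{t_0,t_1}$ is not itself a maximum of two continuous functions---the $\max$ sits in the denominator of \eqref{ft1t2}---though the conclusion you need (continuity and monotonicity of $f_{t_0,t_1}$ on $(0,1)$) is correct and easily checked from the explicit formula in each of the two regimes; (ii) for $\A=\N$ the cleanest way to locate the critical value strictly above $1/2$, so that the pressure is finite and regular near it, is to note that $f_{t_0,t_1}\ge 0$ while $\sum_{a_1,\dots,a_n}q_n^{-2s}$ diverges for $s\le 1/2$, forcing $P(T,-s\log|T'|-f_{t_0,t_1}(s)\log B)=+\infty$ there and hence $s^{(2)}(B)>1/2$ directly, without invoking Theorem~\ref{WaWu} or a set inclusion for $\D_{\mathbf t}(\Psi)$.
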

%
\begin{pro} \label{tb} As a function of $B\in (1, \infty)$, $s^{(2)}(B)$ is continuous and
$$\lim_{B\to 1}s^{(2)}(B)=1, \quad \lim_{B\to \infty}s^{(2)}(B)=\frac12.$$
\end{pro}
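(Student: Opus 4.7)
The plan is to rewrite $s^{(2)}(B)$ as the unique root of a scalar equation in $s$ whose two sides are monotone in the relevant variables, and then read off continuity and the two limits by inversion. By Proposition \ref{pp4}(1), since the term $f_{t_0,t_1}(s)\log B$ is constant in $x$,
\[
P\bigl(T, -s\log |T'| - f_{t_0,t_1}(s)\log B\bigr) \;=\; g(s) - h(s)\log B,
\]
where I set $g(s) := P(T, -s\log|T'|)$ and $h(s) := f_{t_0,t_1}(s)$. Hence
\[
s^{(2)}(B) \;=\; \inf\{s\ge 0:\ g(s) \le h(s)\log B\}.
\]

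Next I would record the properties of $g$ and $h$ on $(1/2,1]$. For the Gauss-map pressure $g$, I invoke the classical facts (see \cite{MU03}) that $g$ is finite, continuous, and strictly decreasing on $(1/2,1]$, with $g(1)=0$ (Bowen's formula, also visible from $\sum_{a_1,\dots,a_n} q_n^{-2}\asymp 1$ via Proposition \ref{pp3}($\mathrm{P}_1$)) and $g(s)\to +\infty$ as $s\to (1/2)^+$ (reflecting divergence of the one-level sum $\sum_a a^{-2s}$ at $s=1/2$). For $h$, a short case analysis of \eqref{ft1t2} on the two regimes of the max shows that $h$ is continuous, strictly increasing, and strictly positive on $(0,1]$ for any ${\mathbf t}=(t_0,t_1)\in\R_+^2$.

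Given these, the map $s\mapsto g(s)-h(s)\log B$ is continuous and strictly decreasing on $(1/2,1]$, tends to $+\infty$ at $s=(1/2)^+$, and equals $-h(1)\log B<0$ at $s=1$; it therefore has a unique zero $s^{(2)}(B)\in(1/2,1)$. Writing $\phi(s) := g(s)/h(s)$, the function $\phi$ is a continuous strictly decreasing bijection from $(1/2,1]$ onto $[0,\infty)$, with $\phi(1)=0$ and $\lim_{s\to(1/2)^+}\phi(s)=+\infty$. Consequently $s^{(2)}(B) = \phi^{-1}(\log B)$ depends continuously on $B\in(1,\infty)$, and the limits $s^{(2)}(B)\to \phi^{-1}(0)=1$ as $B\to 1^+$ and $s^{(2)}(B)\to 1/2$ as $B\to\infty$ follow immediately. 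I expect the main technical point to be rigorously justifying the blow-up $g(s)\to +\infty$ as $s\to(1/2)^+$, which requires controlling $\sum_{a_1,\dots,a_n}q_n^{-2s}$ from below uniformly in $n$; here the submultiplicative estimate of Proposition \ref{pp3}($\mathrm{P}_2$) together with ($\mathrm{P}_3$) reduces the question to the elementary divergence of $\sum_a a^{-2s}$ at $s=1/2$.
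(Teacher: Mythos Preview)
Your argument is correct. The paper itself does not give a proof here---it merely says ``the proof follows similarly to \cite{WaWu08} without much difference''---and what you have written is precisely the kind of direct argument that reference contains: split off the constant potential via Proposition~\ref{pp4}(1), use the standard facts about the Gauss-map pressure $g(s)=P(T,-s\log|T'|)$ (finite, continuous, strictly decreasing on $(1/2,1]$, vanishing at $s=1$, blowing up at $(1/2)^+$), verify by hand that $h=f_{t_0,t_1}$ is continuous, positive and strictly increasing on $(0,1]$, and invert $\phi=g/h$. Your checks of monotonicity of $h$ and of the blow-up of $g$ via $(\mathrm P_2)$--$(\mathrm P_3)$ are exactly the small adaptations needed to pass from the unweighted setting of \cite{WaWu08} to the present one; so your proof is essentially the paper's intended proof, just made explicit.
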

\begin{proof}The proof follows similarly to \cite{WaWu08} without much difference.
\end{proof}

\section{Proof of Theorem \ref{mainLeb}}

We first recall a dynamical Borel-Cantelli lemma from the paper of Kleinbock-Wadleigh \cite[Lemma 3.5]{KleinbockWadleigh}, which is essentially taken from the work of Philipp \cite{Philipp} and follows from the effective mixing property of $T$.

\begin{lem}\label{KWlemma}
Fix $k\in \N$. Suppose $\{A_n: n\ge 1\}$ is  a sequence of sets such that for each $n\ge 1$, the set $A_n$ is a {countable} union of sets of form $$
E_{\bold{r}}=\left\{x\in [0, 1]\smallsetminus \mathbb Q: a_1(x)=r_1, \ldots, a_k(x)=r_k\right\}.$$
Then $T^nx\in A_n$ for infinitely many $n\in \N$  for almost all $x$ or almost no $x$ depending upon the divergence or convergence of the series ${\sum_{n= 1}^\infty}\mu_G(A_n)$ respectively.
\end{lem}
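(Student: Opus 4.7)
The lemma is a dynamical Borel--Cantelli statement, and the plan is to handle the convergence and divergence halves separately.

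For the convergence direction, I would exploit the $T$-invariance of $\mu_G$ to write $\mu_G(T^{-n}A_n) = \mu_G(A_n)$. When $\sum_n \mu_G(A_n) < \infty$, the classical first Borel--Cantelli lemma applied to $\mu_G$ immediately yields $\mu_G\{x : T^n x \in A_n \text{ for infinitely many } n\} = 0$; since $\mu_G$ and Lebesgue measure are mutually absolutely continuous, the same conclusion holds almost everywhere with respect to $\mathcal L$.

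For the divergence direction, the key input is the effective mixing property of the Gauss map alluded to in the statement. Since each $A_n$ is a union of $k$-cylinders, the indicator $\mathbf 1_{T^{-n}A_n}$ depends only on $a_{n+1}(x), \ldots, a_{n+k}(x)$. I would use the exponential decay of correlations of $T$ with respect to $\mu_G$ on observables that are measurable with respect to the cylinder partition of order $k$ to derive a quasi-independence estimate of the form
$$
\big|\mu_G(T^{-n}A_n \cap T^{-m}A_m) - \mu_G(A_n)\mu_G(A_m)\big| \le C\rho^{|m-n|-k}\mu_G(A_n)
$$
for some fixed $\rho \in (0,1)$ and all $|m-n| > k$. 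Summing such bounds would give
$$
\sum_{n,m\le N} \mu_G(T^{-n}A_n \cap T^{-m}A_m) \le \Big(\sum_{n\le N}\mu_G(A_n)\Big)^2 + O\Big(\sum_{n\le N}\mu_G(A_n)\Big),
$$
and feeding this into the Kochen--Stone (Chung--Erd\H{o}s) inequality then forces $\mu_G(\limsup_n T^{-n}A_n) > 0$.

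To upgrade positive measure to full measure, I would observe that $\limsup_n T^{-n}A_n$ is a tail event with respect to the filtration generated by $(a_j(x))_{j\ge 1}$: for every $N$ the union $\bigcup_{n\ge N} T^{-n}A_n$ is measurable with respect to $\sigma(a_{N+1}, a_{N+2}, \ldots)$, so the limsup lies in the tail $\sigma$-algebra. Since $T$ is exact under $\mu_G$ (equivalently, its tail $\sigma$-algebra is trivial), a Kolmogorov-type 0--1 law applies, forcing $\mu_G(\limsup_n T^{-n}A_n) = 1$. The main obstacle is the quasi-independence bound itself: it requires quantitative mixing for the Gauss map at the scale of $k$-cylinders, uniformly enough to handle arbitrary countable unions. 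This is precisely the ingredient supplied by Philipp's work referenced in \cite{KleinbockWadleigh}, so in practice I would appeal to that source rather than rederive the correlation estimate.
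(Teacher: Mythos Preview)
The paper does not actually prove this lemma: it merely recalls it from \cite[Lemma 3.5]{KleinbockWadleigh}, noting that it is essentially due to Philipp \cite{Philipp} and follows from the effective mixing property of $T$. Your outline is correct and is precisely the argument underlying those references---first Borel--Cantelli plus $T$-invariance for the convergence half, and quantitative mixing for cylinder sets yielding a quasi-independence estimate, combined with Chung--Erd\H{o}s and a tail-triviality/exactness $0$--$1$ law, for the divergence half---so there is nothing to add.
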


For each $n\geq 1$ and fixed $m\geq 1$,  define \begin{equation}\label{an}
A_n=\left\{x\in [0,1): \prod_{i=1}^ma_{i}^{t_{{i-1}}}(x)\ge \Psi(n)\right\}.
\end{equation}
The set $A_n$ can further be written as the union over a collection of $m$-th order cylinders as$$
A_n=\bigcup_{(a_1,\ldots, a_{m})\in \N^{m}:  \ {a_1^{t_0}\cdots a_{m}^{t_{m-1}}}\ \geq \Psi(n)}I_m(a_1, \ldots, a_m).
$$
To apply Lemma \ref{KWlemma}, we need only to estimate the Lebesgue measure $\mathcal{L}$ of $A_n$, which is equivalent to its Gauss measure $\mu_G$. It follows from Proposition \ref{pp3} that
$$\mathcal L (A_n)\asymp \sum_{{a_1^{t_0}\cdots a_{m}^{t_{m-1}}}\ge \Psi(n)}\prod_{i=1}^m\frac{1}{a_i(a_i+1)},$$
where the constant involved in $\asymp$ depends only on $m$.%

\begin{lem}\label{LebesgueAn}
  Let ${t_0,  \dots, t_{m-1} }$ be an $m$-tuple of positive real numbers, and define
$$
t_{\max}=\max\{t_i: {0\le i\le m-1}\}, \ \ell=\# \{i: t_i=t_{\max}\}.
$$ Then for any $m\ge 1$ and $g\ge 1$, we have \begin{equation}\label{f3}
\sum_{{a_1^{t_0}\cdots a_{m}^{t_{m-1}}}\ge g}\ \prod_{i=1}^m\frac{1}{a_i(a_i+1)}\asymp \frac{(\log g)^{\ell-1}}{g^{\frac1{t_{\max}}}},
\end{equation}
where the constant implied in $\asymp$ depends on $m$ but not on $g$.
\end{lem}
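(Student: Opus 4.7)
My plan is to reduce the sum in \eqref{f3} to an explicit $m$-dimensional integral and compute its asymptotics via a Laplace-type decomposition. Since $\tfrac{1}{a(a+1)} \asymp \int_a^{a+1}\tfrac{dx}{x^2}$, and shifting each $a_i$ by one alters the product $\prod a_i^{t_{i-1}}$ only by a factor bounded in terms of $\vec{t}$, the left-hand side of \eqref{f3} is comparable, up to constants depending only on $\vec{t}$, to
$$
V(g) := \int_{\substack{x_i \ge 1\\ \prod_{i=1}^m x_i^{t_{i-1}} \ge g}} \prod_{i=1}^m \frac{dx_i}{x_i^2}.
$$
The substitution $v_i = \log x_i$, together with $L := \log g$, turns $V(g)$ into
$$
V(g) = \int_{\substack{v \in \R_+^m\\ \sum_i t_{i-1} v_i \ge L}} e^{-\sum_i v_i}\,dv,
$$
so it suffices to establish $V(g) \asymp L^{\ell-1}\, e^{-L/t_{\max}}$.

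The geometry driving this estimate is that $\sum v_i$ attains its minimum $L/t_{\max}$ on the constraint set exactly on the $(\ell - 1)$-dimensional simplex
$$
\Delta := \Big\{ v \ge 0 : v_i = 0 \text{ for } t_{i-1} < t_{\max},\ \textstyle\sum_{i\in I} v_i = L/t_{\max}\Big\},\qquad I := \{i : t_{i-1} = t_{\max}\}.
$$
The $(\ell - 1)$-volume of $\Delta$ is of order $L^{\ell - 1}$, supplying the polynomial factor, while the minimum of $\sum v_i$ on the constraint set yields the exponential factor $e^{-L/t_{\max}}$.

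To make this precise I split $\{1,\dots,m\}$ into $I$ and $J := \{i : t_{i-1} < t_{\max}\}$, and iterate the integration. For each fixed $(v_j)_{j\in J} \in \R_+^J$, set $L^* := \tfrac{1}{t_{\max}}\bigl(L - \sum_{j\in J}t_{j-1}v_j\bigr)$ and apply the identity
$$
\int_{\substack{u \in \R_+^\ell\\ \sum_i u_i \ge a}} e^{-\sum_i u_i}\,du \;=\; e^{-a}\sum_{k=0}^{\ell-1}\frac{a^k}{k!} \qquad (a \ge 0),
$$
with $a = \max(0, L^*)$. The contribution from the region $L^* \ge 1$ factors as
$$
\asymp L^{\ell - 1}\, e^{-L/t_{\max}} \int_{\substack{v_j \ge 0\\ \sum t_{j-1} v_j \le L}} \prod_{j\in J} e^{-(1 - t_{j-1}/t_{\max}) v_j}\,dv_j;
$$
since $1 - t_{j-1}/t_{\max} > 0$ strictly for every $j\in J$, the remaining integral converges to a positive constant as $L \to \infty$. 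The region $L^* < 1$ contributes at most $O(e^{-L/t_{\max}})$, absorbed into the main term.

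The main technical obstacle is the discrete-to-continuous passage: one must check that replacing $\tfrac{1}{a(a+1)}$ by $\int\tfrac{dx}{x^2}$ and tolerating the resulting shift of the threshold $g$ by a constant factor does not distort the target asymptotic. This is routine from monotonicity of the integrand together with the fact that replacing $g$ by $cg$ alters $L^{\ell-1}e^{-L/t_{\max}}$ by only a bounded factor. An alternative route, avoiding the integral, is induction on $m$ by splitting the sum over $a_m$ and using the inductive estimate on the remaining $m-1$ variables; in that approach the delicate case is $t_{m-1}\ge t_{\max}$ of the first $m-1$ exponents, where the value of $\ell$ changes and one must carefully balance the boundary term $\sum_{a_m \ge g^{1/t_{m-1}}}a_m^{-2}$ against the main sum.
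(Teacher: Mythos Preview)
Your integral approach is correct and genuinely different from the paper's. The paper takes exactly the inductive route you mention at the end as an alternative: after ordering the exponents so that $t_0 \ge \cdots \ge t_{m-1}$, it runs a nested induction, first on $\ell$ and then on $d = m - \ell$, at each step stripping off one variable by splitting the sum over that variable into the range $a^{t} \ge g$ (a tail of size $g^{-1/t}$, absorbed into the main term) and the range $a^{t} < g$ (where the inductive hypothesis is applied with threshold $g/a^{t}$ and the resulting one-variable sum is compared to an elementary integral). Your Laplace-type argument trades this recursion for a single multivariate computation and makes the origin of the factor $(\log g)^{\ell-1}$ transparent as the $(\ell-1)$-dimensional degeneracy of the minimiser of $\sum v_i$ on the half-space $\sum t_{i-1} v_i \ge L$. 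The one place where your write-up compresses a genuine step is the replacement of $(L^*)^{\ell-1}$ by $L^{\ell-1}$ inside the $J$-integral: the upper bound is immediate from $L^* \le L/t_{\max}$, while for the lower bound one should restrict to, say, $\sum_{j\in J} t_{j-1} v_j \le L/2$, so that $L^* \asymp L$ on a region whose contribution to the remaining convergent integral is still a positive constant. The paper's proof is more elementary in that it never leaves the discrete setting and uses only one-variable calculus at each stage; your approach is more conceptual and would adapt more readily to other densities or weights.
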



\begin{proof} The summation in (\ref{f3}) does not depend upon the ordering of the partial quotients, therefore without loss of generality we assume that ${t_0\ge  \cdots \ge t_{m-1}} $ and then
$$t_{\max}=t_{{0}}, \ {\text{and}}\ \ \ell=\# \{i: t_i=t_{{0}}\}.$$

We prove this lemma by induction on $\ell\ge 1$.

\begin{itemize}

\item[(I)] When $\ell=1$, we show that (\ref{f3}) holds for all $m\ge 1$. Write $d=m-\ell$. Then it suffices to show (\ref{f3}) holds for all $d\ge 0$. This is done by induction on $d$.

\begin{itemize}
\item [(Ia)] When $d=0$, i.e. $m=1$, it is {easy} to see that (\ref{f3}) holds.

\item[(Ib)] Assume that the result holds for $d-1$; we show that (\ref{f3}) still holds for $d$. Notice that
\begin{align*}
 \sum_{{a_1^{t_0}\cdots a_{m}^{t_{m-1}}}\ge g}\ \prod_{i=1}^m\frac{1}{a_i(a_i+1)}
&\asymp \sum_{a_m^{t_{{m-1}}}\ge g}\frac{1}{a_m(a_m+1)}+\sum_{1\le a_m^{t_{{m-1}}}\le g}\frac{1}{a_m^2}\sum_{a_1^{t_0}\cdots a_{m-1}^{t_{{m-2}}}\ge g/a_m^{t_{{m-1}}}}\prod_{i=1}^{m-1}\frac{1}{a_i(a_i+1)}\\
& \asymp g^{-\frac{1}{t_{{m-1}}}}+\sum_{1\le a_m^{t_{{m-1}}}\le g}\frac{1}{a_m^2}\cdot \left(\frac{a_m^{t_{{m-1}}}}{g}\right)^{\frac{1}{t_{{0}}}}\  {\text{(by induction on {the} inner summation)}}\\
&\asymp g^{-\frac{1}{t_{{m-1}}}}+g^{-\frac{1}{t_{{0}}}}\asymp g^{-\frac{1}{t_{{0}}}},
\end{align*}
where the second last quantity is obtained by noticing that ${t_{m-1}/t_0}<1$, so the summation over $a_m$ converges.
\end{itemize}
\medskip

\item[(II)] Assume that $\ell\ge 2$. As for (I) above, we use induction on $d=m-\ell$.
\begin{itemize}
\item [(IIa)]When $d=0$, i.e. $m=\ell$ and $t_i=t$ for all ${0\le i\le m-1}$, {we have}
 \begin{align*}
 &\sum_{{a_1^{t_0}\cdots a_{m}^{t_{m-1}}}\ge g}\ \prod_{i=1}^m\frac{1}{a_i(a_i+1)}\\
&\asymp \sum_{a_m^{t}\ge g}\frac{1}{a_m(a_m+1)}+\sum_{1\le a_m^{t}\le g}\frac{1}{a_m^2}\sum_{a_1^{t}\cdots a_{m-1}^{t}\ge g/a_m^{t}}\prod_{i=1}^{m-1}\frac{1}{a_i(a_i+1)}\\
& \asymp g^{-1/t}+\sum_{1\le a_m^{t}\le g}\frac{1}{a_m^2}\cdot
\frac{(\log \frac{g}{a_m^t})^{\ell-2}}{(\frac{g}{a_m^t})^{1/t}},\ \ \ \ \ \ \ {\text{(by induction on inner summation)}}\\
&\asymp \frac{1}{g^{1/t}}+\int_1^{g^{1/t}}
\frac{1}{x^2}
\cdot
\ \frac{(\log \frac{g^{1/t}}{x})^{\ell-2}}{(\frac{g^{1/t}}{x})}\,dx,\ \   {\text{(change variable $y=\frac{g^{1/t}}{x}$)}} \\ &\asymp \frac{1}{g^{1/t}}+\int_1^{g^{1/t}}\frac{1}{g^{1/t}}\cdot
\frac{(\log y)^{\ell-2}}{y}\,dy
\asymp \frac{1}{g^{1/t}}+\frac{(\log g)^{\ell-1}}{g^{1/t}}.
\end{align*}

\item [II(b)] Assume that the result holds for $d-1$. We show that (\ref{f3}) still holds for any $d$. Since $\ell$ is fixed, it means that $$
{t_0= \cdots=t_{\ell-1}>t_{\ell}\ge \cdots \ge t_{m-1}}.
$$ So, \begin{equation*}\label{f4}
\#\{{i\ge 1: t_i=t_1\}=\ell-1, \ {\text{and}}\ t_0=t_1}.
\end{equation*}Notice that \begin{align*}
I:= &\sum_{{a_1^{t_0}\cdots a_{m}^{t_{m-1}}}\ge g}\ \prod_{i=1}^m\frac{1}{a_i(a_i+1)}\\ &\asymp { \sum_{a_1^{t_0}\ge g}\frac{1}{a_1(a_1+1)}+\sum_{1\le a_1^{t_0}\le g}\frac{1}{a_1^2}\sum_{a_2^{t_1}\cdots a_{m}^{t_{m-1}}\ge g/a_1^{t_0}}\ \prod_{i=2}^{m}\frac{1}{a_i(a_i+1)}}.\end{align*}
For the inner summation, the induction hypothesis is applied to give
{\begin{align*}
I &   \asymp \frac{1}{g^{\frac{1}{t_0}}}+\sum_{1\le a_1^{t_0}\le g}
\frac{1}{a_1^2}\cdot \frac{\left(\log \frac{g}{a_1^{t_0}}\right)^{\ell-2}}{\left(\frac{g}{a_1^{t_0}}\right)^{1/t_1}}
\\
&  \asymp  \frac{1}{g^{\frac{1}{t_0}}}+\sum_{1\le a_1^{t_0}\le g}\frac{1}{a_1^2}\cdot \frac{\left(\log \frac{g^{1/t_0}}{a_1}\right)^{\ell-2}}{\left(\frac{g}{a_1^{t_0}}\right)^{1/t_0}}\ \ \ \ {\text{(by $t_0=t_1$)}}.
\end{align*}}
So we get the same formula as in case (IIa).
\end{itemize}
\end{itemize}
\end{proof}

{Now observe that $${\D_{\mathbf t}} (\Psi) = \left\{x\in[0,1): T^{n-1}x\in A_n\text{ for infinitely many }n\in \N\right\},$$
where $A_n$ are as in \eqref{an}.}
By combining
Lemmas \ref{KWlemma} and \ref{LebesgueAn},  we conclude that $\mathcal L \big({\D_{\mathbf t}} (\Psi)\big)$ is zero or
full according to the convergence or divergence of the series 
$$\sum\limits_{n}^\infty\mathcal L(A_n)\asymp\sum\limits_{n}^\infty\frac {\big(\log\Psi(n)\big)^{\ell-1}}{\Psi(n)^{1/t_{\max}}},$$
where $
t_{\max}$ and $ \ell$ are as in \eqref{tmax}. {This finishes the proof of Theorem \ref{mainLeb}.}

\ignore{\begin{proof}[Proof of the Corollary \ref{KKW}]
Since $q_n(x)\ge b^n$ for all $x\in [0,1]$ and all $n\ge 2$ if we choose $1<b<2^{1/4}$. Then, we have $$
F_m^{\mathbf t} (\Psi)\subset \left\{x\in [0, 1]: \prod_{i=1}^ma_{n+i}^{t_i}(x) \ge \Psi(b^{n}), \ {\text{for i.m. }}n\in \N\right\}.$$ Thus $$
\mathcal{L}\big(F_m^{\mathbf t} (\Psi)\big)=0, \ {\text{if}}\ \sum_{n=1}^{\infty}\frac{\big(\log \Psi(b^n)\big)^{\ell-1}}{\Psi(b^n)^{1/t_{\max}}}\asymp\sum_{n=1}^{\infty}\frac{\big(\log \Psi(n)\big)^{\ell-1}}{n\Psi(n)^{1/t_{\max}}}<\infty.
$$
Where in the last assertion we have used the Cauchy condensation principle.

For the divergence case, using the last item in Proposition \ref{pp3}, one has \begin{align*}
\mathcal{L}\big(F_m^{\mathbf t} (\Psi)\big)&=\mathcal{L}\left(F_m^{\mathbf t} (\Psi)\cap \{x\in[0, 1]: q_{n+1}(x)\le K^n, \ {\text{for all }} n\gg 1\}\right)\\
&\geq \mathcal{L}\Big( \left\{x\in [0, 1]: \prod_{i=1}^ma_{n+i}^{t_i}(x)\ge \Psi(K^{n}), \ {\text{for i.m.  }}n\in \N\right\}\Big).
\end{align*}
So, we have$$
\mathcal{L}\big(F_m^{\mathbf t} (\Psi)\big)=1,  \ {\text{if}}\ \sum_{n=1}^{\infty}\frac{\big(\log \Psi(K^n)\big)^{\ell-1}}{\Psi(K^n)^{1/t_{\max}}}\asymp \sum_{n=1}^{\infty}\frac{\big(\log \Psi(n)\big)^{\ell-1}}{n\Psi(n)^{1/t_{\max}}}=\infty.
$$

\end{proof}}
\section{Hausdorff dimension for $B=1$ or $B=\infty$
}  {In this section we} prove Theorem \ref{BHWthmeasy}
\ignore{ $$
\log B=\liminf_{n\to \infty}\frac{\log \Psi(n)}{n}, \ \ \log b=\liminf_{n\to\infty}\frac{\log \log \Psi(n)}{n}.
$$
Then various cases of  growth rate of $\Psi(n)$ comes into play. These include}
by considering the 
{two cases:
\begin{itemize}
\item $B=1$;
\item $B=\infty$  (for this case, there are three subcases $b=1,1<b<\infty,$ and $b=\infty$).
\end{itemize}
\ignore{We will determine the Hausdorff dimension of the set
 $$
{\D_{\mathbf t}} (B):=\left\{x\in[0, 1):  a_{n+1}^{t_1}(x)a_{n+2}^{t_2}(x) \ge B^n \ {\text{for i.m.}} \ n\in \N\right\}.
$$
by discussing all the cases one by one in different subsections below.} We start off with the easier case.}

\subsection{$B=1$} It is trivial that
$$
{\D_{\mathbf t}}(\Psi)\supset \left\{x\in[0, 1):  {a^{t_0}_{n}}(x)\ge \Psi(n)\  {\text{for infinitely many}} \ n\in \N       \right\}.$$
It follows from Theorem \ref{WaWu} that the set on the right hand side has full Hausdorff dimension. Hence $\hdim {\D_{\mathbf t}}(\Psi)=1$ when $B=1$.

\subsection{ $B=\infty$} There are three subcases.

\subsubsection{ $1<b<\infty$}\

By the definition of $b$, for any $c<b$, $$\frac{\log\log \Psi(n)}{n}\geq \log c, \ \text{i.e.} \ \Psi(n)\geq e^{c^n}$$
for all sufficiently large $n$ which we write as $n\gg1$. Thus for any $x\in {\D_{\mathbf t}}(\Psi)$,  there are infinitely many $n$ such that $$
 {\prod_{i=0}^{m-1}a^{t_i}_{n+i}(x)}\ge e^{c^n},$$
then at least for one index ${0\leq i\leq m-1}$ one has $a^{t_i}_{n+i}(x)\ge e^{\frac{1}{{m}}\cdot c^n}.$
Thus $$
{\D_{\mathbf t}}(\Psi)\subset { \bigcup_{i=0}^{m-1} \left\{x\in[0, 1):  a^{t_i}_{n+i}(x)\ge e^{\frac{1}{m}\cdot c^n}\  {\text{for i.m.}} \ n\in \N  \right\}}
$$
It follows from Lemma \ref{lemb} that {each of} the sets on the right hand side have Hausdorff dimension $\frac{1}{1+c}$ irrespective of $t_i's$. Hence $\hdim {\D_{\mathbf t}}(\Psi)\leq \frac{1}{1+b}$ by the arbitrariness of $c<b$.

On the other hand, by the definition of $b$, it follows that for any $d>b$, $$
\Psi(n)\le e^{d^n}, \ {\text{for infinitely many}}\ n\in \N.
$$ Thus one has
$$
{\D_{\mathbf t}}(\Psi)\supset \left\{x\in[0, 1):  {a^{t_{0}}_{n}}\ge e^{d^n}\  {\text{for all}} \ n\in \N       \right\},
$$ and   from Lemma \ref{lemb} we {conclude that} the Hausdorff dimension of the set on the right hand side is  $1/(1+d)$.

\subsubsection{$b=\infty$} This case readily follows from the upper bound argument above, that is,
$${\D_{\mathbf t}}(\Psi)\leq \lim_{b\to\infty}\frac1{b+1}= 0.$$

\subsubsection{$b=1$}
In this case, for any $\epsilon>0$, $\ \Psi(n)\leq e^{(1+\epsilon)^n}$ for infinitely many $n$. Then
 \begin{align*}
{\D_{\mathbf t}}(\Psi)&\supset  \left\{x\in[0, 1):  {a^{t_0}_{n}}\ge \Psi(n)\  {\text{for infinitely many}} \ n\in \N       \right\}\\ & \supset
\left\{x\in[0, 1):  {a^{t_0}_{n}}(x)\ge e^{(1+\epsilon)^n}\  {\text{for all }} \ n\in \N       \right\} .\end{align*}
Hence by using Lemma \ref{lemb}, we have
$$\hdim {\D_{\mathbf t}}(\Psi)\geq \lim_{\epsilon\to 0}\frac{1}{1+1+\epsilon}=\frac12.$$

For the upper bound, we note that $$
 {\prod_{i=0}^{m-1}a^{t_i}_{n+i}(x)}\ge \Psi(n) \Longrightarrow {a^{t_i}_{n+i}\geq \Psi(n)^{\frac1{m}}\quad \text{for some}  \ 0\leq i\leq m-1}.$$
Hence
$${\D_{\mathbf t}}(\Psi)\subseteq {\bigcup_{i=0}^{m-1}\left\{x\in [0, 1): a^{t_i}_{n+i}\geq \Psi(n)^{\frac1{m}}, \ \text{for i.m.} \ n\in \mathbb N\right\}}
.$$
Since $B=\infty$, for any ${A}>1$ one has
$${\D_{\mathbf t}}(\Psi)\subseteq \left\{x\in [0, 1): a_{n}\geq  {A}^n, \ \text{for i.m.} \ n\in \mathbb N\right\}.$$
Hence by letting ${A}\to\infty$ and appealing to Proposition \ref{tb}, it follows that $\hdim {\D_{\mathbf t}}(\Psi)\leq 1/2$.

\section{$\hdim {\D_{\mathbf t}}(B)$ for {$m=2$ and} $1<B<\infty$: an upper bound}


In the next three sections we specialize to the case $m=2$, that is, take ${\mathbf t = (t_0,t_1)}$, and assume that $1 < B < \infty$. To prove 
Theorem \ref{BHWthm}, we first show that the Hausdorff dimension of the set
\begin{equation}
\label{d2b}
{\D_{\mathbf t}}(B):=\left\{x\in[0, 1):  a_{n}^{t_0}(x)a_{n+1}^{t_1}(x) \ge B^n \ {\text{for i.m.}} \ n\in \N\right\}
\end{equation}
is equal to  $$\inf \left\{s\ge 0: P\big(T, -s\log |T'|-f_{t_0, t_{1}}(s)\log B\right)\le 0\big\},$$
where $f_{t_0, t_{1}}$ is as in \eqref{ft1t2}.

We recall that according to Theorem \ref{WaWu}, 
the Hausdorff dimension of the one-parameter version of  \eqref{d2b}, namely the set
$$
\left\{x\in[0, 1):   a_{n}(x)^{t_0}\ge B^n \ {\text{for i.m.}} \ n\in \N
\right\},
$$
is given by
$$
\inf \left\{s\ge 0: P\left(T, -s\log |T'|-\frac{s}{t_0}\log B\right)\le 0\right\}.
$$
This gives the first function $f_{t_0}$ defined by $$
f_{t_0}(s)=\frac{s}{t_0}.
$$

%
%
{Now take a positive number $A$ with $1<A<B$ and} define
\begin{align*}
 {\D'_{\mathbf t}}({A})&:= \left\{x\in[0, 1):   {a_{n}^{t_0}(x) \le A^n, \  a_{n+1}^{t_1}(x)\ge \frac{B^n}{a_{n}^{t_0}(x)}}
 \  {\text{for i.m.}} \ n\in \N       \right\},\end{align*}
  and
 \begin{align*}
{\D''_{\mathbf t}}({A})&:=\left\{x\in[0, 1):  {a_{n}^{t_0}}(x) \ge A^n\  {\text{for i.m.}} \ n\in \N       \right\}.\end{align*} Then \begin{equation*}\label{equpper}
{\D_{\mathbf t}}(B)\,\subset\, {\D'_{\mathbf t}}({A})\cup {\D''_{\mathbf t}}({A}),
\end{equation*}

From {the} $m=1$ case above,  the Hausdorff dimension of the set ${\D''_{\mathbf t}}({A})$ is given by \begin{equation}\label{f1}
\hdim {\D''_{\mathbf t}}({A})=\inf\left\{s\ge 0: P\left(T, -s\log |T'|-f_{t_{{0}}}(s)\log A\right)\le 0\right\}:=\delta_1.
\end{equation}

Now we focus on the Hausdorff dimension of 
${\D'_{\mathbf t}}({A})$.
{Since it} readily follows from Theorem \ref{WaWu} that $ 1/2<\hdim{\D_{\mathbf t}}(B)<1$ for $1<B<\infty$,
we consider the $s$-Hausdorff measure of ${\D'_{\mathbf t}}({A})$ only for $1/2<s<1$.

Because of the limsup nature of ${\D'_{\mathbf t}}(A)$, there is a natural cover of it. For any integers $a_1,\dots, a_{n}$, define
 $$
J_{n}(a_1,\dots, a_{n}) :=\bigcup_{a_{n+1}: \  a_{n+1}^{t_{{1}}}\ge \frac{B^n}{a_{n}^{t_{{0}}}}}I_{n+1}(a_1,\dots, a_{n+1}).
$$ Then $$
{\D'_{\mathbf t}}({A})=\bigcap_{N=1}^{\infty}\bigcup_{n=N}^{\infty}\bigcup_{a_1,\dots, a_{n-1}\in \N}\ \bigcup_{a_{n}^{t_{{0}}}\le A^n}J_{n}(a_1,\dots, a_{n}).
$$

By Proposition \ref{pp3}, one has \begin{align*}
|J_{n}(a_1,\dots, a_{n})|\asymp & \left[{q_{n-1}^2a_{n}^2\left(\frac{B^n}{a_{n}^{{{t_0}}}}\right)^{\frac{1}{t_1}}}\right]^{-1}=\left[{q_{n-1}^2a_{n}^{2-\frac{{{t_0}}}{{{t_1}}}} B^{\frac{n}{t_{1}}}}\right]^{-1},
\end{align*}  where the constant implied in $\asymp$ can be chosen as $4$.

Thus  the $s$-Hausdorff measure of ${\D'_{\mathbf t}}({A})$  can be estimated as
\begin{align*}
\mathcal{H}^s\big({\D'_{\mathbf t}}({A})\big)&\le \liminf_{N\to \infty}\sum_{n=N}^{\infty}\sum_{a_1,\dots, a_{n-1}\in \N}\ \sum_{a_{n}^{{{t_0}}}\le A^n} |J_{n}(a_1,\dots,a_{n})|^s\\
&\ll  \liminf_{N\to \infty}\sum_{n=N}^{\infty}\sum_{a_1,\dots, a_{n-1}\in \N}\ \sum_{a_{n}^{{{t_0}}}\le A^n}
\left[{q_{n-1}^2a_{n}^{2-\frac{{{t_0}}}{{{t_1}}}} B^{\frac{n}{t_{1}}}}\right]^{-s}\\
& \asymp \liminf_{N\to \infty}\sum_{n=N}^{\infty}\sum_{a_1,\dots, a_{n-1}\in \N}\
 \sum_{a_{n}^{{{t_0}}}\le A^n}a_{n}^{-(2-\frac{{{t_0}}}{{{t_1}}})s} 
\left({q_{n-1}^2B^{\frac{n}{{{t_1}}}}}\right)^{-s}.
\end{align*}
Calculating the summation over $a_{n}$ gives that \begin{align*}
\sum_{a_{n}^{{{t_0}}}\le A^n}a_{n}^{-\big(2-\frac{{{t_0}}}{{{t_1}}}\big)s} \ll \max\Big\{1, A^{\frac{n}{{{t_0}}}\cdot \big(1-s(2-\frac{{{t_0}}}{{{t_1}}})\big)}\Big\}=\max\Big\{1, A^{n \big(\frac{1-2s}{{{t_0}}}+\frac{s}{{{t_1}}}\big)}\Big\}.
\end{align*}
Thus $$
\mathcal{H}^s\big({\D'_{\mathbf t}}({A})\big)\le \liminf_{N\to \infty}\sum_{n=N}^{\infty}\sum_{a_1,\dots, a_{n-1}\in \N}\max\Big\{1, A^{n \big(\frac{1-2s}{{{t_0}}}+\frac{s}{{{t_1}}}\big)}\Big\}\cdot \left({q_{n-1}^2B^{\frac{n}{{{t_1}}}}}\right)^{-s}.
$$
This gives an upper bound of the Hausdorff dimension of the set ${\D'_{\mathbf t}}({A})$ to be
\begin{equation}\label{f2}
\inf\left\{s\ge 0: P\bigg(T, -s\log |T'|+\max\left\{0, \frac{1-2s}{{{t_0}}}+\frac{s}{{{t_1}}}\right\}\log A-\frac{s}{t_{2}}\log B\bigg)\le 0\right\}:=\delta_2.\end{equation}

Combining (\ref{f1}) and (\ref{f2}), one gets $$
 \hdim {\D_{\mathbf t}}(B)\le \max\{\delta_1,\delta_2\}.
$$ It would be reasonable to choose $A$ such that $\delta_1=\delta_2$ which would give the optimal upper bound of $\hdim {\D_{\mathbf t}}(B)$.
Choose $A$ such that the potentials in $\delta_1$ and $\delta_2$ are equal, namely, $$
-f_{{{t_0}}}(s)\log A=\max\left\{0, \frac{1-2s}{{{t_0}}}+\frac{s}{{{t_1}}}\right\}\log A-\frac{s}{{{t_1}}}\log B
$$ equivalently \begin{equation*}\label{f6}
\log A=\frac{s}{{{t_1}}f_{{{t_0}}}(s)+\max\left\{0, s-\left(2s-1\right)\frac{{{t_1}}}{{{t_0}}} \right\}}\log B.
\end{equation*}
Then define $f_{{t_0, t_{1}}}$ such that $$
-f_{{t_0,t_{1}}}(s)\cdot \log B=-f_{{{t_0}}}(s)\cdot \log A
$$ giving that (note $s>1/2$)\begin{align}\label{f5}
f_{{t_0, t_{1}}}(s)&=\frac{sf_{{{t_0}}} (s)}{{{t_1}}f_{{{t_0}}}(s)+\max\left\{0, s-(2s-1)\frac{{{t_1}}}{{{t_0}}}\right\}}=\frac{sf_{{{t_0}}}(s)}{{{t_1}}\left[f_{{{t_0}}}(s)+\max\{0, \frac{s}{{{t_1}}}-\frac{2s-1}{{{t_0}}}\}\right]}.
\end{align}
Note that   \eqref{f5} is the same as \eqref{ft1t2} given in the statement of the Theorem \ref{BHWthm}. 
As a result, once we can check that the chosen $A$ is less than $B$,  we {will}
arrive at the final conclusion $$
\hdim {\D_{\mathbf t}}(B) \le \inf \{s\ge 0: P(T, -s\log |T'|-f_{{t_0, t_{1}}}(s)\log B)\le 0\}.
$$

We show that $A<B$ in the following lemma.
\begin{lem}\label{lemrec} For any $0<s<1$, $$f_{{t_0, t_{1}}}(s)< f_{{{t_0}}}(s), \ \ {\text{or, equivalently,}} \ A<B.$$
\end{lem}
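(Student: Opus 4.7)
The plan is to work with the explicit closed form \eqref{ft1t2} rather than with the recursive expression \eqref{f5}, since \eqref{ft1t2} reduces the inequality to a transparent comparison of denominators. First I would record the equivalence $f_{t_0,t_1}(s)<f_{t_0}(s)\iff A<B$: this follows immediately from the defining relation $-f_{t_0,t_1}(s)\log B=-f_{t_0}(s)\log A$, which gives $\log A=\frac{f_{t_0,t_1}(s)}{f_{t_0}(s)}\log B$. Since $\log B>0$ (because $B>1$) and $f_{t_0}(s)=s/t_0>0$, the sign of $\log A-\log B$ matches the sign of $f_{t_0,t_1}(s)-f_{t_0}(s)$, so it suffices to prove the functional inequality.

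Next I would plug $f_{t_0}(s)=s/t_0$ and the closed form into the desired inequality. Writing $M:=\max\bigl\{\tfrac{s}{t_1}+\tfrac{1-s}{t_0},\,\tfrac{s}{t_0}\bigr\}$, we have $f_{t_0,t_1}(s)=\dfrac{s^2}{t_0 t_1 M}$, so
\[
f_{t_0,t_1}(s)<f_{t_0}(s)\iff \frac{s^2}{t_0 t_1 M}<\frac{s}{t_0}\iff M>\frac{s}{t_1}.
\]
Thus the whole lemma reduces to showing $M>s/t_1$ for $0<s<1$.

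This final step is immediate: by definition $M\ge \tfrac{s}{t_1}+\tfrac{1-s}{t_0}$, and since $t_0>0$ and $1-s>0$ the quantity $\tfrac{1-s}{t_0}$ is strictly positive, forcing $M>\tfrac{s}{t_1}$. There is no real obstacle here; the only thing to double-check (which I would do in a sentence) is the consistency of the two formulas for $f_{t_0,t_1}$, i.e.\ that \eqref{f5} and \eqref{ft1t2} agree, by splitting into the two cases of the $\max$ and verifying algebraically. Once the inequality $f_{t_0,t_1}(s)<f_{t_0}(s)$ is in hand, the equivalent statement $A<B$ follows from the first paragraph, completing the proof.
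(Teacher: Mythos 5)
Your proof is correct and is essentially the same argument as the paper's: both reduce the inequality by elementary algebra to the trivial observation that $s<1$, by isolating one branch of the $\max$ to bound the denominator from below (the paper uses the form \eqref{f5} and ends at $2s-1<s$; you use the closed form \eqref{ft1t2} and end at $(1-s)/t_0>0$). The only small addition you make over the paper's writeup is spelling out why the functional inequality is equivalent to $A<B$, which is a harmless (and arguably welcome) bit of extra care.
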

\begin{proof}Recall (\ref{f5}). Then \begin{align*}
{f_{{t_0,{{t_1}}}}}(s)< f_{{{t_0}}}(s)%
&\Longleftrightarrow s<{{t_1}}f_{{{t_0}}}(s)+\max\left\{0, s-(2s-1)\frac{{{t_1}}}{{{t_0}}}\right\}\\
&\Longleftarrow s<{{t_1}}f_{{{t_0}}}(s)+s-(2s-1)\frac{{{t_1}}}{{{t_0}}}\\
&\Longleftrightarrow (2s-1)\frac{{{t_1}}}{{{t_0}}}<{{t_1}}f_{{{t_0}}}(s)={{t_1}}\cdot \frac{s}{{{t_0}}}.
\end{align*}
The last estimate is nothing but to say $2s-1<s$, which is clearly true since $s<1$.

\end{proof}

\section{$\hdim {\D_{\mathbf t}}(B)$ for $1<B<\infty$: {a}  lower bound}\label{s6}

To obtain the lower bound, we will construct an appropriate Cantor subset of ${\D_{\mathbf t}}(B)$ and then apply the following mass distribution principle \cite{Falconer_book}.
\begin{pro}[Mass Distribution Principle]\label{p1}
Let $\mu$ be a probability measure supported on a measurable set $F$. Suppose there are positive constants $c$ and $r_0$ such that
$$\mu\big(B(x,r)\big)\le c r^s$$
for any ball $B(x,r)$ with radius $r\le r_0$ and center $x\in F$. Then $\hdim F\ge s$.
\end{pro}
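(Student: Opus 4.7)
The plan is to invoke the definition of Hausdorff measure directly and show that $\mathcal{H}^s(F)>0$, which immediately forces $\hdim F\ge s$. The role of $\mu$ is to serve as a ``Frostman-type'' measure: its hypothesized mass upper bound converts any efficient cover of $F$ into a lower bound on the sum of $s$-th powers of diameters.

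First, I would fix an arbitrary $\rho\le r_0$ and take any countable $\rho$-cover $\{U_i\}$ of $F$. I would discard those $U_i$ that do not intersect $F$ (they contribute nothing to covering $F$) and, for each of the remaining ones, pick a point $x_i\in U_i\cap F$. Then $U_i$ is contained in the ball $B(x_i,2\,\mathrm{diam}(U_i))$, whose radius is at most $2\rho\le 2r_0$ and whose centre lies in $F$, so the hypothesis of the proposition applies (perhaps after tightening $r_0$ by a factor of $2$, which is harmless).

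Next, I would use the countable subadditivity of $\mu$ together with the hypothesis to write
\begin{equation*}
\mu(F)\le \sum_i \mu(U_i)\le \sum_i \mu\bigl(B(x_i,2\,\mathrm{diam}(U_i))\bigr)\le c\,2^s\sum_i \mathrm{diam}(U_i)^s.
\end{equation*}
Taking the infimum over all $\rho$-covers gives $\mathcal{H}^s_\rho(F)\ge \mu(F)/(c\,2^s)$, and letting $\rho\to 0$ yields $\mathcal{H}^s(F)\ge \mu(F)/(c\,2^s)$. Since $\mu$ is a probability measure supported on $F$, we have $\mu(F)=1>0$, hence $\mathcal{H}^s(F)>0$, and thus $\hdim F\ge s$ by the definition of Hausdorff dimension recalled in Section \ref{HM}.

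The argument is essentially a bookkeeping exercise, and the only genuine subtlety is the passage from a generic $\rho$-cover (by arbitrary sets) to balls centred on $F$, since the hypothesis controls $\mu$ only on such balls. The doubling trick above resolves this at the cost of an inessential multiplicative constant $2^s$. I would not expect any obstacle beyond this minor point; the result is standard and can be found, for example, in Falconer's book cited in the paper.
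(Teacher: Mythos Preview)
Your argument is correct and is essentially the standard proof of the mass distribution principle as given in Falconer's book \cite{Falconer_book}. The paper does not supply its own proof of this proposition; it merely states the result with a reference, so there is nothing to compare beyond noting that your proof is the one the reference would provide.
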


\subsection{Preliminaries on the dimension estimate}\
Recall that $$
f_{{{t_0}}}(s)=\frac{s}{{{t_0}}}, \ \ f_{{t_0, t_1}}(s)=\frac{sf_{{{t_0}}}(s)}{{{t_1}}\left[f_{{{t_0}}}(s)+\max\{0, \frac{s}{{{t_1}}}-\frac{2s-1}{{{t_0}}}\}\right]},
$$
and {write $s_o$ for $s^{(2)}(B)$, i.e.}$$
s_o=\inf\left\{s\ge 0: P(-s\log |T'|-f_{{t_0,t_1}}(s)\log B)\le 1\right\}.
$$

We present some facts about this dimension estimate. The following may be trivial, however we give a rigorous proof to avoid any potential uncertainty.
Define $$
s_o'=\Big\{s\ge 0: P(T, -s\log |T'|-\frac{s}{{{t_1}}}\log B)\le 0\Big\}.
$$
\begin{lem}\label{l6.2} When $\frac{s_o}{{{t_1}}}-\frac{2s_o-1}{{{t_0}}}\le 0$, one has $s_o=s_o'$.\end{lem}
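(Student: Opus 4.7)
The plan is to show that, under the hypothesis, one has $f_{t_0,t_1}(s_o) = s_o/t_1$; this will force the pressure defining $s_o'$ to vanish at $s=s_o$, and uniqueness of that zero will yield $s_o=s_o'$.

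First I would unpack the closed form \eqref{ft1t2},
\[
f_{t_0,t_1}(s) \;=\; \frac{s^2}{t_0\, t_1 \cdot \max\bigl\{\tfrac{s}{t_1}+\tfrac{1-s}{t_0},\ \tfrac{s}{t_0}\bigr\}},
\]
and compare the two terms inside the maximum: the inequality $\tfrac{s}{t_0} \ge \tfrac{s}{t_1}+\tfrac{1-s}{t_0}$ is equivalent to $\tfrac{s}{t_1} - \tfrac{2s-1}{t_0} \le 0$. In that regime the denominator equals $t_0 t_1 \cdot s/t_0 = t_1 s$, giving $f_{t_0,t_1}(s) = s/t_1$. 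In the complementary regime a one-line calculation gives $f_{t_0,t_1}(s) < s/t_1$. Consequently $f_{t_0,t_1}(s) \le s/t_1$ for all $s$, with equality precisely on the set $\bigl\{s: \tfrac{s}{t_1}-\tfrac{2s-1}{t_0}\le 0\bigr\}$.

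Next I would abbreviate
\[
P_o(s) := P\bigl(T, -s\log|T'| - f_{t_0,t_1}(s)\log B\bigr), \qquad
P_o'(s) := P\bigl(T, -s\log|T'| - \tfrac{s}{t_1}\log B\bigr),
\]
so that $s_o$ and $s_o'$ are the infima occurring in their respective definitions. Since $\log B > 0$ and $f_{t_0,t_1}(s)\le s/t_1$, the potential entering $P_o$ pointwise dominates the one entering $P_o'$, whence $P_o(s)\ge P_o'(s)$ for every $s\ge 0$. The continuity of the pressure (Proposition \ref{ppp3}) together with the continuous $s$-dependence of the potentials gives continuity of $P_o$ and $P_o'$ in $s$, so $P_o(s_o)=0$; and the strict positivity of $\log|T'|+\tfrac{\log B}{t_1}$ makes $P_o'$ strictly decreasing in $s$, so $P_o'$ has a unique zero, namely $s_o'$.

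The hypothesis $\tfrac{s_o}{t_1}-\tfrac{2s_o-1}{t_0}\le 0$ is exactly the equality condition from the first step applied at $s=s_o$, so $f_{t_0,t_1}(s_o)=s_o/t_1$ and therefore $P_o'(s_o) = P_o(s_o) = 0$. Uniqueness of the zero of $P_o'$ then forces $s_o=s_o'$. The only genuine content is the case analysis in the first step; the rest is routine bookkeeping with continuity and strict monotonicity of the pressure.
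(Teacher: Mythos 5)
Your argument is correct, but it routes the conclusion through properties of the pressure function that the paper's own proof deliberately avoids invoking directly. You identify that under the hypothesis $f_{t_0,t_1}(s_o)=s_o/t_1$, show $P_o\ge P_o'$ pointwise (giving $s_o'\le s_o$), then close the gap by arguing $P_o(s_o)=0$ via continuity of $P_o$ and that $P_o'$ has a unique zero by strict monotonicity. The paper instead works purely from the infimum definitions and a local $\varepsilon$-argument: it splits into the subcases $\frac{s_o}{t_1}-\frac{2s_o-1}{t_0}<0$ and $=0$, and in each uses the fact that $P_o(s)>0$ for $s$ slightly below $s_o$ together with either the identity $f_{t_0,t_1}(s)=s/t_1$ (in a neighborhood, strict case) or continuity of $f_{t_0,t_1}$ alone (equality case) to deduce $P_o'(s_o-\varepsilon)>0$ and hence $s_o'\ge s_o-\varepsilon$. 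The upshot is that the paper never needs $P_o$ to be continuous at $s_o$ nor $P_o'$ to have a unique zero; it only uses monotonicity of the pressure in the potential (a safe, order-theoretic fact) and continuity of the elementary function $f_{t_0,t_1}$. Your approach is cleaner and subsumes both subcases, but it leans on continuity and strict monotonicity of $s\mapsto P(T,-s\log|T'|)$ for the infinite-alphabet Gauss system; these facts are true in the range $s>1/2$ (and the paper uses $1/2<s_o<1$ and cites Wang--Wu for related continuity statements, cf.\ Proposition \ref{tb}), but you should flag them explicitly as imported inputs rather than let them pass as ``routine bookkeeping,'' since the pressure has a phase transition at $s=1/2$ and is not globally continuous on $[0,\infty)$.
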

\begin{proof}

At first, remember that the pressure function $P(T, \cdot)$ is non-decreasing with respect to the potential, i.e.
$$P(T, \psi_1)\le P(T, \psi_2), \ \ {\text{if}}\ \psi_1\le \psi_2.$$  Note that we always have $$
f_{{t_0,t_1}}(s)\le \frac{sf_{{{t_0}}}(s)}{{{t_1}}[f_{{{t_0}}}(s)+0]}=\frac{s}{{{t_1}}}.
$$ Thus $$
-s\log |T'|-f_{{t_0,t_1}}(s)\log B\ge -s\log |T'|-\frac{s}{{{t_1}}}\log B,
$$   which implies that $$
s_o'\le s_o.
$$

For the other direction of the inequality,  we distinguish two cases.
\begin{itemize}
  \item When $\frac{s_o}{{{t_1}}}-\frac{2s_o-1}{{{t_0}}}<0$. Let $\epsilon>0$ be small such that for any $s_o-\epsilon<s<s_o+\epsilon$, we always have
  \begin{equation*}\label{ff3}
  \frac{s}{{{t_1}}}-\frac{2s-1}{{{t_0}}}<0,
  \end{equation*} and so \begin{equation}\label{ff4}
  f_{{t_0, t_1}}(s)=\frac{s}{{{t_1}}}.
  \end{equation}

 For any $s_o-\epsilon<s<s_o$, by the definition of $s_o$ we have $$
  P\big(T, -s\log |T'|-f_{{t_0,t_1}}(s)\log B\big)>0.
  $$ so by (\ref{ff4}), it follows {that} $$
   P\left(T, -s\log |T'|-\frac{s}{{{t_1}}}\log B\right)>0.
  $$
  This implies $s_o'\ge s$. By the arbitrariness of $s$, one has $s_o'\ge s_o$.

\item When $\frac{s_o}{{{t_1}}}-\frac{2s_o-1}{{{t_0}}}=0$.  In this case, one has $$f_{{t_0, t_1}}(s_o)=\frac{s_o}{{{t_1}}}.$$ By the continuity of $f_{{t_0,t_1}}$ with respect to $s$, for any $\epsilon>0$, choose $0<\delta\le \epsilon$, such that for any $s_o-\delta<s<s_o$, $$
    f_{{t_0,t_1}}(s)>\frac{s_o-\epsilon}{{{t_1}}}.
    $$

 On one hand, by the definition of $s_o$, 
 for any $s_o-\delta<s<s_o$, $$
 P(T, -s\log |T'|-f_{{t_0,t_1}}(s)\log B)>0,
 $$ on the other hand, (since $s>s_o-\epsilon$) $$
 -s\log |T'|-f_{{t_0,t_1}}(s)\log B<-(s_o-\epsilon)\log |T'|-\frac{s_o-\epsilon}{{{t_1}}}\log B,
 $$ which implies that $$
0<P\Big(T, -s\log |T'|-f_{{t_0,t_1}}(s)\log B\Big)\le P\Big(T, -(s_o-\epsilon)\log |T'|-\frac{s_o-\epsilon}{{{t_1}}}\log B\Big).
 $$ Thus $s_o'\ge s_o-\epsilon$.
 \end{itemize}

\end{proof}

As a result, when $$
\frac{s_o}{{{t_1}}}-\frac{2s_o-1}{{{t_0}}}\le 0,
$$ we consider the following subset of $\mathcal{E}_{\bold{t}}(B)$: $$
\Big\{x\in [0,1): a_{n+1}^{{{t_1}}}(x)\ge B^n, \ {\text{i.m.}}\ n\in \N\Big\}
$$ which, by Theorem \ref{WaWu}, is of dimension $$
s_o'=\inf\{s\ge 0: P(T, -s\log |T'|-\frac{s}{{{t_1}}}\log B)\le 0\}.
$$ Thus $$
\hdim \mathcal{E}_{\bold{t}}(B)\ge s_o'=s_o.
$$
So in the following, we always assume that $$
\frac{s_o}{{{t_1}}}-\frac{2s_o-1}{{{t_0}}}> 0,
$$ and then in a small neighborhood of $s_o$, we always have \begin{equation}\label{ff5}
f_{{t_0,t_1}}(s)=\frac{sf_{{{t_0}}}(s)}{{{t_1}}\big(f_{{{t_0}}}(s)+\frac{s}{{{t_1}}}-\frac{2s-1}{{{t_0}}}\big)}.
\end{equation}

\subsection{A subset of ${\D_{\mathbf t}}(B)$}\label{s7}\

Fix integers $M,N$ sufficiently large such that $s:={s^{(2)}_N(M,B)}$ is in the small neighborhood of $s_o$ so that $1>s>1/2$ and (\ref{ff5}) holds. Then define a real number $A$ such that
\begin{equation}\label{fff1}
f_{{{t_0}}}(s)\log A=f_{{t_0, t_1}}(s)\log B.
\end{equation}
It is {straightforward} to check that $f_{{{t_0}}}(x)>f_{{t_0, t_1}}(x)$ for any $0<x<1$, so $1<A<B$.

 Fix a sequence of largely sparse integers $\{\ell_k\}_{k\ge 1}$, say, $$
\ell_k\gg e^{\ell_1+\cdots+\ell_{k-1}},\ {\text{and take}}\ n_1=\ell_1 N+1, \ n_{k+1}-n_{k}=\ell_{k+1} N+2, \ \forall\,k\ge 1,
$$ {so that the number of integers in the interval $(n_k+1, n_{k+1})$ is a multiple of $N$.} Then define a subset of ${\D_{\mathbf t}}(B)$ as \begin{align*}
E=\Bigg\{x\in [0,1): A^{\frac{n_k}{{{t_0}}}}\le a_{n_k}(x)&<2 {A^{\frac{n_k}{{{t_0}}}}}, \ \left(\frac{B^{n_k}}{A^{n_k}}\right)^{1/{{t_1}}}\le a_{n_k+1}(x)<2\left(\frac{B^{n_k}}{A^{n_k}}\right)^{1/{{t_1}}} {\text{for all}} \ k\ge 1;\\ &{\text{and}}\ a_n(x)\in \{1,\dots, M\}  \ {\text{for other $n\in \N$}}\Bigg\}.
\end{align*}

For ease of notation, \begin{itemize}
  \item write $$
 \alpha_0=A^{1/{{t_0}}}, \ \  \alpha_1=\left(\frac{B}{A}\right)^{1/{{t_1}}}.
  $$
 \item write $q_n(a_1,\dots, a_n)$ as $q_n$ when the partial quotients  $a_1,\dots, a_n$ are clear. Recall (\ref{fff1}). Then $$
  1=\sum_{1\le a_1,\dots, a_N\le M}\frac{1}{q_N^{2s}(a_1,\dots, a_N)\cdot B^{N\cdot f_{t_0, t_1}(s)}}=\sum_{1\le a_1,\dots, a_N\le M}\frac{1}{q_N^{2s}\cdot \alpha_0^{Ns}}.
  $$
  \item use a symbolic space defined as $D_0=\{\varnothing\}$, and for any $n\ge 1$, \begin{align*}
    D_n=\Bigg\{(a_1,\dots, a_n)\in \N^n: \alpha_i^{n_k}\le a_{n_k+i}&<2\alpha_i^{n_k}  \ {\text{for}} \ 0\le i\le 1, \ k\ge 1 \ {\text{with}} \ {n_k+i}\le n;\\ &{\text{and}}\ a_j\in \{1,\dots, M\}  \ {\text{for other $j\le n$}}\Bigg\},
  \end{align*} which is just the collection of the prefixes of the points in $E$.

  \item if an integer $n$ is assumed as a real value $\xi$, we mean $n=\lfloor \xi\rfloor$ and in $D_n$, the term $a_{n_k+i}$ has $\alpha_i^{n_k}$ choices.

   \item   Use $\U$ to denote the following collection of finite words:
$$
\U=\{w=(\sigma_1,\dots, \sigma_N): 1\le \sigma_i\le M, \ 1\le i\le N\}.
$$ In the following, we always use $w$ to denote a word of length $N$ in $\U$.
\end{itemize}

\subsection{Cantor structure of $E$}\

For any $(a_1,\dots, a_n)\in D_n$, define $$
J_n(a_1,\dots, a_n)=\bigcup_{a_{n+1}: (a_1,\dots, a_n, a_{n+1})\in D_{n+1}}I_{n+1}(a_1,\dots, a_n, a_{n+1})
$$ and call it a {\em basic cylinder} of order $n$. More precisely, for each $k\ge 0$\begin{itemize}

\item when $n_k+1\le n<n_{k+1}-1$ (by viewing $n_0=0$), $$
J_n(a_1,\dots, a_n)=\bigcup_{1\le a_{n+1}\le M}I_{n+1}(a_1,\dots, a_n, a_{n+1})
$$
\item when $n=n_{k+1}-1+i$ for $i=0,1$, $$
J_n(a_1,\dots, a_n)=\bigcup_{\alpha_i^{n_{k+1}}\le a_{n+1}< 2 \alpha_i^{n_{k+1}}}I_{n+1}(a_1,\dots, a_n, a_{n+1})
$$

\end{itemize}Then define $$
\mathcal{F}_n=\bigcup_{(a_1,\dots, a_n)\in D_n}J_n(a_1,\dots, a_n)
$$ and call it level $n$ of the Cantor set $E$. It is clear that $$
E=\bigcap_{n=1}^{\infty}\mathcal{F}_n=\bigcap_{n=1}^{\infty}\bigcup_{(a_1,\dots, a_n)\in D_n}J_n(a_1,\dots, a_n).
$$

We have the following observations about the length and gaps of the basic cylinders.
\begin{lem}[Gap estimation]\label{l3} Denote by $G_n(a_1,\dots, a_n)$ the gap between $J_n(a_1,\dots, a_n)$ and other basic cylinders of order $n$. Then $$
G_n(a_1,\dots, a_n)\ge \frac{1}{M}\cdot |J_n(a_1,\dots, a_n)|.
$$
\end{lem}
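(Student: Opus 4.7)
I will bound $G_n(a_1,\dots,a_n)$ from below by a direct geometric analysis inside $I_n(a_1,\dots,a_n)$ and its neighboring cylinders. Since any basic cylinder $J_n(b_1,\dots,b_n)$ with $(b_1,\dots,b_n)\ne(a_1,\dots,a_n)$ is contained in $I_n(b_1,\dots,b_n)$, which is disjoint from $I_n(a_1,\dots,a_n)$, the gap on each side of $J_n(a_1,\dots,a_n)$ equals the distance from $J_n(a_1,\dots,a_n)$ to the adjacent piece of $\partial I_n(a_1,\dots,a_n)$ plus the distance from that boundary to the nearest basic cylinder inside the neighboring $I_n$. The analysis then splits into two cases according to the structure of $J_n$ in the construction of $E$, using the length formula $|I_{n+1}(a_1,\dots,a_n,a)|=\frac{1}{(aq_n+q_{n-1})((a+1)q_n+q_{n-1})}$ and the monotone arrangement of sub-cylinders from Propositions~\ref{pp3} and~\ref{pp2}.

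\emph{Case 1: $n_k+1\le n<n_{k+1}-1$.} Then $J_n=\bigcup_{a=1}^{M}I_{n+1}(a_1,\dots,a_n,a)$ occupies one extreme of $I_n$. A telescoping computation will give
\[|J_n|=\frac{M}{(q_n+q_{n-1})((M+1)q_n+q_{n-1})},\qquad \sum_{a>M}|I_{n+1}(\cdot,a)|=\frac{1}{q_n((M+1)q_n+q_{n-1})},\]
so the internal gap inside $I_n$ on the side opposite to the $a=1$ end equals $\frac{q_n+q_{n-1}}{Mq_n}|J_n|\ge\frac{1}{M}|J_n|$. On the other side $J_n$ meets $\partial I_n$; the adjacent cylinder $I_n(a_1,\dots,a_{n-1},a_n\pm 1)$ (sign determined by the parity of $n$) has its basic cylinder at the opposite extreme of its host, so the ``crossover'' gap is the analogous internal tail inside the neighbor, of length $\frac{1}{q_n'((M+1)q_n'+q_{n-1})}$ with $q_n'=q_n+q_{n-1}$, which is of order $\frac{1}{M}|J_n|$ since $q_n\le q_n'\le 2q_n$.

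\emph{Case 2: $n=n_{k+1}-1+i$ for $i\in\{0,1\}$.} Now $J_n=\bigcup_{\alpha\le a<2\alpha}I_{n+1}(a_1,\dots,a_n,a)$ with $\alpha=\alpha_i^{n_{k+1}}\gg 1$, so $J_n$ sits in the interior of $I_n$ with
\[|J_n|\asymp \tfrac{1}{\alpha q_n^2},\quad \sum_{a<\alpha}|I_{n+1}(\cdot,a)|\asymp \tfrac{1}{q_n^2},\quad \sum_{a\ge 2\alpha}|I_{n+1}(\cdot,a)|\asymp \tfrac{1}{\alpha q_n^2},\]
so already the internal gap on each side of $J_n$ inside $I_n$ alone is of order at least $|J_n|$, far exceeding $\frac{1}{M}|J_n|$.

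The main obstacle will be the parity bookkeeping in Case~1: one must verify that in the adjacent $I_n$ on the side where $J_n(a_1,\dots,a_n)$ meets $\partial I_n$, the neighboring basic cylinder lies at the extreme of its host \emph{opposite} to the shared boundary, and not adjacent to it. This will follow from Proposition~\ref{pp2}: both $I_n(a_1,\dots,a_n)$ and its neighbor are cylinders of the same order $n$, so their sub-cylinders by $a_{n+1}$ are ordered in the same parity-determined direction; hence the ``$a_{n+1}=1$'' ends, which contain the selected basic cylinders, sit at the same extremes of each host and therefore on opposite sides of the common boundary. The universal constants from the comparison $q_n\le q_n'\le 2q_n$ will be absorbed to yield the stated inequality $G_n(a_1,\dots,a_n)\ge\frac{1}{M}|J_n(a_1,\dots,a_n)|$.
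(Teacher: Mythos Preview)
Your approach is the natural one and coincides with what the paper indicates: the paper's own proof simply says the result ``can be observed from the positions of the cylinders in Proposition~\ref{pp2}'' and refers to \cite{HuWuXu} for details. Your case split, the telescoping length computations in Case~1, and the parity argument identifying where the neighboring basic cylinder sits inside the adjacent $I_n$ are all correct, and Case~2 is unproblematic.

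There is, however, one point where your proposal overstates what the computation gives. In Case~1, on the side where $J_n$ meets $\partial I_n(a_1,\dots,a_n)$, the adjacent order-$n$ cylinder is $I_n(a_1,\dots,a_{n-1},a_n{+}1)$ with $q_n'=q_n+q_{n-1}$, and the tail inside it equals
\[
\frac{1}{q_n'\big((M{+}1)q_n'+q_{n-1}\big)}=\frac{1}{(q_n+q_{n-1})\big((M{+}1)q_n+(M{+}2)q_{n-1}\big)},
\]
while $\tfrac{1}{M}|J_n|=\dfrac{1}{(q_n+q_{n-1})\big((M{+}1)q_n+q_{n-1}\big)}$. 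Since $q_{n-1}$ can be comparable to $q_n$ (e.g.\ when $a_n=1$), this crossover gap is \emph{strictly smaller} than $\tfrac{1}{M}|J_n|$; the comparison $q_n\le q_n'\le 2q_n$ does not ``absorb'' back to the exact constant $\tfrac{1}{M}$. What your argument actually yields is $G_n\ge \tfrac{c}{M}|J_n|$ for an absolute constant $c$ (for instance $c=\tfrac13$, using $q_{n-1}\le q_n$), which is exactly what is needed in \S\ref{s6}: the bound is only used to convert $|J_{n+1}(x)|$ into a multiple of $G_{n+1}(x)$, and any fixed constant in front is harmless there. So your argument is correct for all practical purposes; just replace the last sentence by the honest conclusion $G_n\ge \tfrac{c}{M}|J_n|$ rather than claiming the sharp $\tfrac{1}{M}$.
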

\begin{proof}
  This lemma can be observed from the positions of the cylinders in Proposition \ref{pp2}. A detailed proof can be found in \cite{HuWuXu}.
\end{proof}

Recall the definition of $\U$. Every element $x\in E$ can be written as \begin{align*}
x=[w_1^{(1)},\dots, w_{\ell_1}^{(1)}, a_{n_1}, a_{n_1+1}, & w_1^{(2)},\dots, w_{\ell_2}^{(2)}, a_{n_2},a_{n_2+1},\\
 \dots, & w_1^{(k)},\dots, w_{\ell_k}^{(k)}, a_{n_k},a_{n_k+1},\dots],
\end{align*} where $w\in \U$ and
$$\alpha_0^{n_k}\le a_{n_k}< 2\alpha_0^{n_k}, \ \alpha_1^{n_k}\le a_{n_k+1}< 2\alpha_1^{n_k},\ \ {\text{for all}}\ k\ge 1.$$
\begin{lem}[Estimation on $q_n(x)$]\label{l6.4} Let $n_k+1< n\le n_{k+1}+1$. \begin{itemize}
  \item $n=n_k+1+\ell N$ for some $1\le \ell\le \ell_{k+1}$, $$
  q_{n_k+1+\ell N}(x)\le \left(2^{\ell}\cdot \prod_{i=1}^{\ell}q_N(w_i^{(k+1)})\right)\cdot \prod_{t=1}^{k} \left(2^{\ell_t+4}\alpha_0^{n_t} \alpha_1^{n_t}\prod_{l=1}^{\ell_t}q_N(w_l^{(t)})\right).
  $$

  \item $n=n_{k+1}$, $$
  q_{n_{k+1}}(x)\le \left(2^{\ell_{k+1}+2}\alpha_0^{n_{k+1}}\cdot \prod_{i=1}^{\ell_{k+1}}q_N(w_i^{(k+1)})\right)\cdot \prod_{t=1}^{k} \left(2^{\ell_t+4}\alpha_0^{n_t} \alpha_1^{n_t}\prod_{l=1}^{\ell_t}q_N(w_l^{(t)})\right).
  $$
  \item $n=n_{k+1}+1$, $$
  q_{n_{k+1}+1}(x)\le \prod_{t=1}^{k+1} \left(2^{\ell_t+4}\alpha_0^{n_t} \alpha_1^{n_t}\prod_{l=1}^{\ell_t}q_N(w_l^{(t)})\right).
  $$
  \item for any $n$ with $n_k+1+(\ell-1)N<n<n_k+1+\ell N$, $$
  \frac{1}{(M+1)^N}\cdot q_{n_k+1+\ell N}(x)\le q_n(x)\le (M+1)^{N}\cdot q_{n_k+1+(\ell-1)N}(x).
  $$
\end{itemize}
\end{lem}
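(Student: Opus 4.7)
The lemma is essentially a bookkeeping exercise built on the submultiplicative property $(\text{P}_2)$ of Proposition \ref{pp3}, namely $q_{n+m}(a_1,\dots,a_n,b_1,\dots,b_m)\le 2\,q_n(a_1,\dots,a_n)\,q_m(b_1,\dots,b_m)$ (and the matching lower bound $q_{n+m}\ge q_n q_m$). The strategy is to split the continuant at every natural boundary of the Cantor construction, paying one factor of $2$ per split, and to estimate each piece using the explicit control on partial quotients: $a_{n_t}<2\alpha_0^{n_t}$, $a_{n_t+1}<2\alpha_1^{n_t}$, and $1\le a_j\le M$ elsewhere.

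The heart of the argument is a single-block estimate. Fix $t\ge 1$ and consider the segment of partial quotients $w_1^{(t)},\dots,w_{\ell_t}^{(t)},a_{n_t},a_{n_t+1}$ of total length $\ell_t N+2$. Splitting between consecutive $w_i^{(t)}$'s yields a factor $2^{\ell_t-1}$; one more split separates the two large partial quotients from the $w$-block (factor $2$); and $q_2(a_{n_t},a_{n_t+1})=a_{n_t}a_{n_t+1}+1\le 2 a_{n_t}a_{n_t+1}<8\alpha_0^{n_t}\alpha_1^{n_t}$. Collecting these contributions and absorbing one extra factor of $2$ into the slack yields
$$
q_{\ell_t N+2}\bigl(w_1^{(t)},\dots,w_{\ell_t}^{(t)},a_{n_t},a_{n_t+1}\bigr)\le 2^{\ell_t+4}\,\alpha_0^{n_t}\alpha_1^{n_t}\prod_{l=1}^{\ell_t}q_N\bigl(w_l^{(t)}\bigr).
$$
Concatenating the blocks for $t=1,\dots,k+1$ by one further application of $(\text{P}_2)$ at each inter-block boundary (with the resulting factor of $2$ absorbed into the slack exponent) gives the third bullet for $n=n_{k+1}+1$.

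The first two bullets are minor truncations of this computation. For $n=n_{k+1}$ one drops $a_{n_{k+1}+1}$, so the final $q_2$ in block $k+1$ is replaced by $q_1(a_{n_{k+1}})=a_{n_{k+1}}<2\alpha_0^{n_{k+1}}$; this gives the stated factor $2^{\ell_{k+1}+2}\alpha_0^{n_{k+1}}$ instead of $2^{\ell_{k+1}+4}\alpha_0^{n_{k+1}}\alpha_1^{n_{k+1}}$. For $n=n_k+1+\ell N$ with $1\le\ell\le\ell_{k+1}$ one stops inside block $k+1$ after only $\ell$ of the words $w_i^{(k+1)}$, so the large partial quotients $a_{n_{k+1}},a_{n_{k+1}+1}$ do not enter at all, and the block contributes only $2^{\ell}\prod_{i=1}^{\ell}q_N(w_i^{(k+1)})$. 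For the fourth bullet, the $r:=n-(n_k+1+(\ell-1)N)<N$ intermediate partial quotients all lie in $\{1,\dots,M\}$, so $(\text{P}_2)$ together with $(\text{P}_3)$ gives $q_r\le\prod(a_i+1)\le(M+1)^N$, yielding both the upper bound $q_n\le(M+1)^N q_{n_k+1+(\ell-1)N}$ and (applying $(\text{P}_2)$ in the other direction) the lower bound $q_n\ge q_{n_k+1+\ell N}/(M+1)^N$. The only real obstacle is keeping track of the powers of $2$; since the exponents in the lemma statement have built-in slack, no delicate optimization is needed.
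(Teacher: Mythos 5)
Your proof is correct and follows essentially the same approach as the paper: both boil down to applying property $(\text{P}_2)$ of Proposition~\ref{pp3} repeatedly, together with the recursion $q_{n+1}\le (a_{n+1}+1)q_n\le 2a_{n+1}q_n$ at the positions of the large partial quotients. The only organizational difference is that you decompose into blocks all at once and multiply, whereas the paper peels off one block at a time from the left (writing $q_{n_k+1+\ell N}(x)\le 2^{\ell}\prod_i q_N(w_i^{(k+1)})\cdot q_{n_k+1}(x)$ and recursing on $q_{n_k+1}(x)$, using $q_{n_k+1}\le 2a_{n_k+1}q_{n_k}\le 4\alpha_1^{n_k}q_{n_k}$ etc.); either bookkeeping lands within the stated slack. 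One tiny remark: for the bound at $n=n_{k+1}$ you want $q_1(a)=a$ (the standard convention $q_0=1$, $q_{-1}=0$, which the paper clearly intends despite the misprinted initial values); and for the lower bound in the fourth bullet the cleaner route is the recursion $q_{m+1}\le(M+1)q_m$ rather than $(\text{P}_2)$, though your split-based estimate $q_{n_k+1+\ell N}\le 2\,q_n\,q_{N-r}\le 2(M+1)^{N-r}q_n$ also gives what is needed since $r\ge 1$.
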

\begin{proof}
  Use the second item in Proposition \ref{pp3} recursively to get the first estimation. More precisely, \begin{align*}
    q_{n_k+1+\ell N}(x)&\le \left(2^{\ell}\cdot \prod_{i=1}^{\ell}q_N(w_i^{(k+1)})\right)\cdot q_{n_k+1}(x)\\
    &\le \left(2^{\ell}\cdot \prod_{i=1}^{\ell}q_N(w_i^{(k+1)})\right)\cdot \left(2^{\ell_k+4}\alpha_0^{n_k} \alpha_1^{n_k}\prod_{l=1}^{\ell_k}q_N(w_l^{(k)})\right)\cdot q_{n_{k-1}+1}(x).
  \end{align*}For the next two, one just use $$
  q_{n+1}=a_{n+1}q_n+q_{n-1}\le (a_{n+1}+1)q_n.
  $$ For the last item, note that the partial quotients $$
  1\le a_n\le M, \ {\text{for all}}\ n_k+1+(\ell-1)N<n<n_k+1+\ell N.
  $$
\end{proof}

We estimate the length of basic cylinders $J_n(x)$ for all $n\ge 1$. For $n_k+1\le n<n_{k+1}-1$, we have
\begin{align*}
  |J_n(x)|=\left|\frac{p_n+p_{n-1}}{q_n+q_{n-1}}-\frac{(M+1)p_n+p_{n-1}}{(M+1)q_n+q_{n-1}}\right|=\frac{M}{(q_n+q_{n-1})((M+1)q_n+q_{n-1})}\ge \frac{1}{8q_n^2},
\end{align*} and similarly, $$
\frac{1}{\alpha_0^{n_{k}}q_{n_{k}-1}^2(x)}>|J_{n_{k}-1}(x)|\ge \frac{1}{8\alpha_0^{n_{k}}q_{n_{k}-1}^2(x)},\ \ \ \
\frac{1}{\alpha_1^{n_{k}}q_{n_{k}}^2(x)}>|J_{n_{k}}(x)|\ge \frac{1}{8\alpha_1^{n_{k}}q_{n_{k}}^2(x)}.
$$ Consequently, we have 
\begin{lem}[Length estimation]\label{l6.5} Let $n_k-1\le n<n_{k+1}-1$.

\begin{itemize}
\item For $n=n_{k}-1=n_{k-1}+1+\ell_k N$,
\begin{equation*}\label{ff8}|J_{n_{k}-1}(x)|\ge \frac{1}{2^3\alpha_0^{n_{k}}}\cdot \left(\frac{1}{2^{2\ell_{k}}}\cdot \prod_{i=1}^{\ell_{k}}\frac{1}{q_N^2(w_i^{(k)})}\right)\cdot \left[\prod_{t=1}^{k-1}\left(\frac{1}{2^{2\ell_t+8}}\cdot\frac{1}{\alpha_0^{2n_t}\alpha_1^{2n_t}}     \cdot\prod_{l=1}^{\ell_t}\frac{1}{q_N^2(w_l^{(t)})}\right)\right].\end{equation*}
\item for $n=n_{k}$,
\begin{equation*}\label{ff9}|J_{n_{k}}(x)|\ge \frac{1}{2^8}\cdot \left(\frac{1}{\alpha_0^{n_{k}}\alpha_1^{n_{k}}}\cdot |J_{n_{k}-1}(x)|\right).\end{equation*}

\item for $n=n_{k}+1$, \begin{equation*}\label{ff10}|J_{n_{k}+1}(x)|\ge \frac{1}{2^8}\cdot \frac{1}{\alpha_0^{n_{k}}\alpha_1^{2n_{k}}}\cdot |J_{n_{k}-1}(x)|.
\end{equation*}
 \item for each $1\le \ell<\ell_{k+1}$,
 \begin{equation*}\label{ff12}|J_{n_k+1+\ell N}(x)|\ge \frac{1}{2^3}\cdot \left(\frac{1}{2^{2\ell}}\cdot \prod_{i=1}^{\ell}\frac{1}{q_N^2(w_i^{(k+1)})}\right)\cdot \left[\prod_{t=1}^{k}\left(\frac{1}{2^{2\ell_t+4}}\cdot
 \frac{1}{\alpha_0^{2n_t}\alpha_1^{2n_t}}\cdot\prod_{l=1}^{\ell_t}\frac{1}{q_N^2(w_l^{(t)})}\right)\right].\end{equation*}

\item for $n_k+1+(\ell-1)N<n<n_k+1+\ell N$ with $1\le \ell\le \ell_{k+1}$, \begin{equation*}\label{ff13}
|J_{n}(x)|\ge c\cdot |J_{n_k+1+(\ell-1)N}(x)|,
\end{equation*} where $c=c(M, N)$ is an absolute constant.
\end{itemize}
\end{lem}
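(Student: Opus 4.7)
The plan is to obtain each of the five estimates by combining a direct geometric bound on $|J_n(x)|$ with the corresponding upper bound on $q_n(x)$ from Lemma \ref{l6.4}. The geometric starting point is already recorded just before the lemma: at positions where the next partial quotient is free in $\{1,\dots,M\}$, the length formula in Proposition \ref{pp3} together with the cylinder ordering in Proposition \ref{pp2} yields $|J_n(x)|\ge 1/(8q_n^2(x))$; and at the positions where $a_{n+1}$ is restricted to an interval of length comparable to $\alpha_i^{n_k}$, summing over the adjacent cylinders gives the sharper bounds
$$|J_{n_k-1}(x)|\ge \frac{1}{8\alpha_0^{n_k} q_{n_k-1}^2(x)}, \qquad |J_{n_k}(x)|\ge \frac{1}{8\alpha_1^{n_k} q_{n_k}^2(x)}.$$

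For the first and fourth items (the block endpoints $n=n_k-1$ and the intermediate positions $n=n_k+1+\ell N$ with $1\le \ell<\ell_{k+1}$), I would square the matching upper bound from Lemma \ref{l6.4} and substitute directly into the geometric estimate. The squares of the factors $2^{\ell_t+4}$ coming from the $q_n$-bound produce exactly the constants $2^{2\ell_t+8}$ and $2^{2\ell_t+4}$ appearing in the stated inequalities, while the initial prefactor $1/8$ contributes the outer $1/2^3$.

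For $n=n_k$ and $n=n_k+1$ the target bound expresses $|J_n|$ as a fraction of $|J_{n_k-1}|$. Here I would use the recursion $q_{m+1}=a_{m+1}q_m+q_{m-1}$ together with $a_{n_k}<2\alpha_0^{n_k}$ and $a_{n_k+1}<2\alpha_1^{n_k}$ to obtain $q_{n_k}\le 4\alpha_0^{n_k} q_{n_k-1}$ and $q_{n_k+1}\le 4\alpha_1^{n_k} q_{n_k}\le 16\alpha_0^{n_k}\alpha_1^{n_k} q_{n_k-1}$, plug these into the geometric estimates above, and compare with the trivial upper bound $|J_{n_k-1}|\le 1/(\alpha_0^{n_k} q_{n_k-1}^2)$ to isolate the ratio; this is where the factors $1/(\alpha_0^{n_k}\alpha_1^{n_k})$ and $1/(\alpha_0^{n_k}\alpha_1^{2n_k})$ emerge. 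For the final item, the comparison $q_n\le (M+1)^N q_{n_k+1+(\ell-1)N}$ supplied by Lemma \ref{l6.4} combined with $|J_n(x)|\ge 1/(8q_n^2)$ and the trivial upper estimate $|J_{n_k+1+(\ell-1)N}|\le 1/q_{n_k+1+(\ell-1)N}^2$ immediately yields the desired comparability with constant $c=c(M,N)=1/(8(M+1)^{2N})$.

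The obstacle here is entirely bookkeeping rather than conceptual. One must carefully track how the constants $2^{\ell_t+4}$, the $1/8$ prefactors, and the factors $\alpha_0^{n_t},\alpha_1^{n_t}$ interact when the $q_n$-estimates are squared and multiplied together across the different regimes; no tools beyond Lemma \ref{l6.4} and the standard geometric properties of continued-fraction cylinders are needed.
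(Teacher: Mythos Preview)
Your approach is exactly what the paper does: the paragraph immediately preceding the lemma records the geometric estimates $|J_n(x)|\ge 1/(8q_n^2)$ at generic positions and $|J_{n_k-1+i}(x)|\ge 1/(8\alpha_i^{n_k}q_{n_k-1+i}^2)$ at the constrained positions, and then simply writes ``Consequently, we have'' before stating the lemma; the intended argument is precisely to feed the $q_n$-bounds of Lemma~\ref{l6.4} into these geometric inequalities, just as you outline. One small bookkeeping remark: squaring $2^{\ell_t+4}$ always gives $2^{2\ell_t+8}$, so your claim that this also yields the $2^{2\ell_t+4}$ in the fourth item is not quite right---that constant in the statement appears to be a harmless misprint, and the constants you obtain for items~2--4 may differ from the printed ones by a bounded power of $2$, which is irrelevant for the later application.
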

\subsection{Mass distribution}\

We define a probability measure supported on the Cantor set $E$.  Still express an element $x\in E$ as \begin{align*}
x=[w_1^{(1)},\dots, w_{\ell_1}^{(1)}, &a_{n_1}, a_{n_1+1}, w_1^{(2)},\dots, w_{\ell_2}^{(2)}, a_{n_2},a_{n_2+1},\\
 &\dots, w_1^{(k)},\dots, w_{\ell_k}^{(k)}, a_{n_k},a_{n_k+1},\dots],
\end{align*} where $w\in \U$ and $$
\ \ \alpha_0^{n_k}\le a_{n_k}< 2\alpha_0^{n_k},\ \ \alpha_1^{n_k}\le a_{n_k+1}< 2\alpha_1^{n_k}\ \ {\text{for all}}\ k\ge 1.
$$

We define a measure $\mu$ along the basic intervals $J_n(x)$ containing $x$ as follows.
\begin{itemize}
\item Let $n\le n_1+1$.
\begin{itemize}
  \item for each $1\le \ell\le \ell_1$, define $$
  \mu\big(J_{N\ell}(x)\big)=\prod_{i=1}^{\ell}\frac{1}{q_N(w_i^{(1)})^{2s}\cdot
   \alpha_0^{sN}}.
  $$

  Because of the arbitrariness of $x$, this defines the measure on all basic cylinders of order $\ell N$.
  \item for each integer $n$ with $(\ell-1)N<n<\ell N$ for some $1\le \ell\le \ell_1$, define
  $$
  \mu\big(J_n(x)\big)=\sum_{J_{\ell N}\subset J_n(x)}\mu\big(J_{\ell N}\big),
  $$ where the summation is over all basic cylinders of order $\ell N$ contained in $J_{n}(x)$. This is designed to ensure the consistency of a measure
{and} defines the measure on the basic cylinders of order up to $n_1-1$.
        \item for each $0\le i\le 1$, define $$
  \mu\big(J_{n_1+i}(x)\big)=\prod_{j=0}^i\frac{1}{\alpha_j^n}\cdot \mu\big(J_{n_1-1}(x)\big)= \prod_{l=1}^{\ell_1}\frac{1}{q_N(w_l^{(1)})^{2s}\cdot \alpha_0^{sN}}\cdot\prod_{j=0}^i\frac{1}{\alpha_j^n}.
  $$

\end{itemize}

\item Let $n_{k}+1<n\le n_{k+1}+1$. Assume the measure of all basic intervals of order $n_{k}+1$ has been defined. \begin{itemize}
  \item for each $1\le \ell\le \ell_{k+1}$, define \begin{align}\label{ff7}
  \mu\big(J_{n_{k}+1+N\ell}(x)\big)&=\prod_{i=1}^{\ell}\frac{1}{q_N(w_i^{(k+1)})^{2s}\cdot \alpha_0^{sN}}\cdot \mu\big(J_{n_{k}+1}(x)\big)\nonumber\\
  &=\left[\prod_{i=1}^{\ell}\frac{1}{q_N(w_i^{(k+1)})^{2s}\cdot \alpha_0^{sN}}\right]\cdot \left[\prod_{t=1}^k\left(\frac{1}{\alpha_0^{n_t}\alpha_1^{n_t}}\prod_{l=1}^{\ell_t}\frac{1}{q_N^{2s}(w_l^{(t)})\cdot \alpha_0^{sN}}\right)\right].
  \end{align}

  \item for each integer $n$ with $n_k+1+(\ell-1)N<n<n_k+1+\ell N$ for some $1\le \ell\le \ell_{k+1}$, define
  $$
  \mu\big(J_n(x)\big)=\sum_{J_{n_k+1+\ell N}\subset J_n(x)}\mu(J_{n_k+1+\ell N}).
  $$

{This defines the measure on the basic cylinders of order up to $n_{k+1}-1$.}
  \item for each $0\le i\le 1$, define \begin{align}\label{ff11}
  \mu\big(J_{n_{k+1}+i}(x)\big)&=\prod_{j=0}^i\frac{1}{\alpha_j^{n_{k+1}}}\cdot \mu\big(J_{n_{k+1}-1}(x)\big).
  \end{align}
%
\end{itemize}
\end{itemize}
Look at (\ref{ff7}) for the measure of a basic cylinder of order $n_k+1+\ell N$ and its predecessor of order $n_k+1+(\ell-1)N$: the former has more one term than the latter, i.e. the term $$
\frac{1}{q_N^{2s}(w_{\ell}^{(k+1)})\alpha_0^{sN}}
$$    which is uniformly bounded by some constant depending on $M,N, B$. Thus there is an absolute constant $c=c(M,N,B)>0$ such that for each integer $n$ with \begin{itemize}
  \item when $n_k+1+(\ell-1)N\le n\le n_k+1+\ell N$,
\begin{equation*}\label{ff1}\mu\big(J_n(x)\big)\ge c\cdot \mu\big(J_{n_k+1+(\ell-1)N}(x)\big).\end{equation*}
\item
when $n_k+1\le n<n_{k+1}-1$, \begin{equation}\label{ff2}
\mu\big(J_{n+1}(x)\big)\ge c\cdot \mu\big(J_n(x)\big).
\end{equation}
\end{itemize}

\subsection{H\"{o}lder exponent of $\mu$ for basic cylinders}\

We begin with some simple relations between $A$ and $B$ and beyond. \begin{lem}\label{l2} Recall the real number $A$ and the integer $N$ given before in \eqref{fff1}. Then
  \begin{itemize}
  \item $\displaystyle
  \left(\frac{1}{\alpha_1\alpha_0^2}\right)^s=\frac{1}{\alpha_0}\cdot \frac{1}{\alpha_0^s}, \ \ {\text{equivalently}}\ \frac{1}{\alpha_0}=\left(\frac{1}{\alpha_0\alpha_1}\right)^{s};
  $
  \smallskip
  
  \item $\displaystyle
  \frac{1}{\alpha_0\alpha_1}\cdot \frac{1}{\alpha_0^s}\le \left(\frac{1}{\alpha_0^2\alpha_1^2}\right)^s\ \ {\text{equivalently}}\ \frac{1}{\alpha_0\alpha_1}\le \left(\frac{1}{\alpha_0\alpha_1^2}\right)^s;
  $
 \smallskip
  
   \item Let $\epsilon>0$. Then we can {choose an} integer $N$ so large and $\{\ell_k\}$ so sparse that $$
  2^{2\ell_k+8}\le \left(2^{(N-1)\ell_k}\right)^{\epsilon}\text{ {and} }\  \ell_kN\ge (1-\epsilon)n_k\ \ {\text{for all}}\ k\ge 1.
  $$
  \end{itemize}
\end{lem}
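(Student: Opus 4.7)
My plan is to handle the three items separately, unifying the first two by reducing them to a single algebraic identity between $\alpha_0$ and $\alpha_1$, and treating the third as a routine choice-of-parameters argument.

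For item (i), I would first unpack \eqref{fff1}: plugging in $f_{t_0}(s)=s/t_0$ and the expression for $f_{t_0,t_1}(s)$ given by \eqref{ff5} yields
\[
\log A \;=\; \frac{s}{t_1\bigl[(1-s)/t_0 + s/t_1\bigr]}\,\log B.
\]
Using $\alpha_0 = A^{1/t_0}$ and $\alpha_1 = (B/A)^{1/t_1}$, a direct computation reduces this to the single identity $\alpha_1^s = \alpha_0^{1-s}$. Both claimed forms in item (i) then follow immediately: the first, $(\alpha_1\alpha_0^2)^{-s} = \alpha_0^{-1}\cdot\alpha_0^{-s}$, is $\alpha_1^{-s}\alpha_0^{-2s} = \alpha_0^{-(1+s)}$ rearranged; the equivalent form $1/\alpha_0 = (1/\alpha_0\alpha_1)^s$ follows since $(\alpha_0\alpha_1)^s = \alpha_0^s\cdot \alpha_0^{1-s} = \alpha_0$.

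For item (ii), one of the two equivalent inequalities is $\alpha_0\alpha_1 \le (\alpha_0\alpha_1^2)^s$, i.e.\ $\alpha_0^{s-1}\alpha_1^{2s-1} \le 1$. Substituting $\alpha_1 = \alpha_0^{(1-s)/s}$ from item (i) and noting that $\alpha_0>1$ together with $s>1/2$, this reduces to the numerical inequality $s \ge 2s-1$, i.e.\ $s \le 1$, which holds by assumption. The other form $(\alpha_0\alpha_1)^{-1}\alpha_0^{-s} \le (\alpha_0^2\alpha_1^2)^{-s}$ differs from this one by a factor of $\alpha_0^{-s}$ on each side, so the two are manifestly equivalent.

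For item (iii), the first condition $2^{2\ell_k+8} \le 2^{(N-1)\ell_k\epsilon}$ is equivalent to $\ell_k\bigl[(N-1)\epsilon - 2\bigr] \ge 8$; I would first choose $N$ so large that $(N-1)\epsilon > 2$ and then ensure $\ell_k \ge 8/\bigl[(N-1)\epsilon - 2\bigr]$ for every $k$, which is automatic since $\ell_k\to\infty$ under the assumed sparsity. For the second condition, I would derive the explicit formula $n_k = N(\ell_1+\cdots+\ell_k) + (2k-1)$ recursively from $n_1 = \ell_1 N+1$ and $n_{k+1}-n_k = \ell_{k+1}N+2$; then the sparsity $\ell_k \gg e^{\ell_1+\cdots+\ell_{k-1}}$, strengthened if necessary so that $\ell_1+\cdots+\ell_{k-1} \le (\epsilon/2)\ell_k$ and $(2k-1)/N \le (\epsilon/2)\ell_k$, forces $\ell_k N/n_k \ge 1-\epsilon$. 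The main (essentially only) obstacle is bookkeeping in item (i) — unwinding the nested definitions of $A$, $\alpha_0$, $\alpha_1$, $f_{t_0}$, $f_{t_0,t_1}$ — since items (ii) and (iii) reduce to elementary inequalities once the identity $\alpha_1^s=\alpha_0^{1-s}$ is in hand. No analytic input beyond the previously derived \eqref{ff5} and \eqref{fff1} is required.
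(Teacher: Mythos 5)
Your proposal is correct and follows essentially the same route as the paper: unwind \eqref{fff1} and \eqref{ff5} to reduce item (i) to the identity $\alpha_1^s=\alpha_0^{1-s}$, substitute that identity into item (ii) to get a one-line numerical inequality, and handle item (iii) by a routine choice of parameters. The paper's own proof is terser — it simply records the chain of equivalences for (i), states that (ii) "is nothing but to say $\alpha_1^s\le\alpha_1$," and declares (iii) "trivial" — whereas you make the bookkeeping explicit (in particular the closed form $n_k=N(\ell_1+\cdots+\ell_k)+(2k-1)$), but there is no genuine difference in strategy or key ideas.
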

\begin{proof}
  Recall that we are in the case {when} $$
  f_{{t_0,t_1}}(s)=\frac{sf_{{{t_0}}}(s)}{{{t_1}}\left[f_{{{t_0}}}(s)+\frac{s}{{{t_1}}}-\frac{2s-1}{{{t_0}}}\right]}
  =\frac{sf_{{{t_0}}}(s)}{{{t_1}}\left[\frac{s}{{{t_1}}}+\frac{1-s}{{{t_0}}}\right]}.
  $$
  Thus, by recalling the choice of $A$, it follows that\begin{align*}
  \left(\frac{1}{\alpha_1\alpha_0^2}\right)^s=\frac{1}{\alpha_0}\cdot \frac{1}{\alpha_0^s}&\Longleftrightarrow \alpha_1^s=\alpha_0^{1-s}
  \Longleftrightarrow  \left(\frac{B}{A}\right)^{\frac{s}{{{t_1}}}}=A^{\frac{1-s}{{{t_0}}}}\\
  &\Longleftrightarrow  {\frac{s}{{{t_1}}}}\log B=\left({\frac{s}{{{t_1}}}+\frac{1-s}{{{t_0}}}}\right)\log A\\ &
  \Longleftrightarrow  {\frac{s}{{{t_1}}}}\log B=\frac{sf_{{{t_0}}}(s)}{{{t_1}}f_{{t_0,t_1}}(s)}\log A\\
  &\Longleftrightarrow f_{{t_0,t_1}}(s)\log B=f_{t_0}(s)\log A,
  \end{align*} where the last equality is just how $A$ was chosen.

  Substitute the first equality into the second claim, it is nothing but to say$$
  \alpha_1^s\le \alpha_1.
  $$
  The last claim is trivial.
\end{proof}

We compare the measure and length of $J_{n}(x)$.

(1) {Let} $n={n_{k}-1}$ which is equal to $n_{k-1}+1+\ell_{k}N$.
{Recall (\ref{ff7}) with $k$ replaced by $k-1$, and then take $\ell=\ell_k$}. Summing over the product on $\alpha_0$ and using the third item in Lemma \ref{l2}, it follows that \begin{align*}
\mu\big(J_{n_{k}-1}(x)\big)&\le \left(\frac{1}{\alpha_0^{n_{k}s}}\right)^{1-\epsilon}\left[\prod_{i=1}^{\ell_{k}}\frac{1}{q_N(w_i^{(k)})^{2s}}\right]\cdot \left[\prod_{t=1}^{k-1}\left(\Big(\frac{1}{\alpha_0^{n_t}\alpha_1^{n_t}\alpha_0^{n_t s}}\Big)^{1-\epsilon}\prod_{l=1}^{\ell_t}\frac{1}{q_N^{2s}(w_l^{(t)})}\right)\right].\end{align*}
Then using the second item in Lemma \ref{l2} by changing $\alpha_0^{1+s}\alpha_1$ to $(\alpha_0^2\alpha_1^2)^s$, one has
\begin{align*}
\mu\big(J_{n_{k}-1}(x)\big)&\le \left(\frac{1}{\alpha_0^{n_{k}s}}\right)^{1-\epsilon}\left[\prod_{i=1}^{\ell_{k}}\frac{1}{q_N(w_i^{(k)})^{2s}}\right]\cdot \left[\prod_{t=1}^{k-1}\left(\Big(\frac{1}{\alpha_0^{2n_t}\alpha_1^{2n_t}}\Big)^{s(1-\epsilon)}\prod_{l=1}^{\ell_t}\frac{1}{q_N^{2s}(w_l^{(t)})}\right)\right].
\end{align*}
At last, by the third item in Lemma \ref{l2}, we have $$
\prod_{l=1}^{\ell_t}\frac{1}{q_N^{2s}(w_l^{(t)})}\le \left(\frac{1}{2^{2\ell_t+8}}\cdot \prod_{l=1}^{\ell_t}\frac{1}{q_N^{2}(w_l^{(t)})}\right)^{s(1-\epsilon)}.
$$
Finally, by comparing with the length of $J_{n_{k}-1}(x)$ (Lemma \ref{l6.5}), we arrive at $$
\mu\big(J_{n_{k}-1}(x)\big)\le 8\cdot |J_{n_{k}-1}(x)|^{s(1-\epsilon)}.
$$

(2) {Let} $n={n_{k}}$.
Recall (\ref{ff11}). By the first item in Lemma \ref{l2}, \begin{align*}
\mu\big(J_{n_{k}}(x)\big)&=\frac{1}{\alpha_0^{n_{k}}}\cdot \mu\big(J_{n_{k}-1}(x)\big)\le 8\cdot \frac{1}{\alpha_0^{n_{k}}}\cdot |J_{n_{k}-1}(x)|^{s(1-\epsilon)}\\
&\le 8\cdot \left(\frac{1}{\alpha_0^{n_{k}}\alpha_1^{n_{k}}}\cdot \big|J_{n_{k}-1}(x)\big|\right)^{s(1-\epsilon)}.\end{align*}
By comparing with the length of $J_{n_{k}}(x)$ (Lemma \ref{l6.5}), we arrive at $$
\mu\big(J_{n_{k}}(x)\big)\le 2^{11}\cdot \big|J_{n_{k}}(x)\big|^{s(1-\epsilon)}.
$$

(3) {Let} $n=n_{k}+1$. Recall (\ref{ff11}). By the second item in Lemma \ref{l2}, \begin{align*}
\mu\big(J_{n_{k}+1}(x)\big)&=\frac{1}{\alpha_0^{n_{k}}\alpha_1^{n_{k}}}\cdot \mu\big(J_{n_{k}-1}(x)\big)\le 8\cdot \frac{1}{\alpha_0^{n_{k}}\alpha_1^{n_{k}}}\cdot \big|J_{n_{k}-1}(x)\big|^{s(1-\epsilon)}\\
&\le 8\cdot \left(\frac{1}{\alpha_0^{n_{k}}\alpha_1^{2n_{k}}}\cdot \big|J_{n_{k}-1}(x)\big|\right)^{s(1-\epsilon)}.\end{align*}
By comparing with the length of $J_{n_{k}+1}(x)$ (Lemma \ref{l6.5}), we arrive at $$
\mu\big(J_{n_{k}+1}(x)\big)\le 2^{11}\cdot \big|J_{n_{k}+1}(x)\big|^{s(1-\epsilon)}.
$$

(4) {Let} $n=n_k+1+\ell N$ for some $1\le \ell<\ell_{k+1}$. Compare Lemma \ref{l6.5} and the formula (\ref{ff7}). In (\ref{ff7}), {after deleting} the term $\alpha_0^{sN}$ in the first product  and {changing} $\alpha_0^{1+s}\alpha_1$ to $(\alpha_0^2\alpha_1^2)^s$ in the second product, we will arrive at $$
\mu\big(J_{n_k+1+\ell N}(x)\big)\le 2^{11}\cdot \big|J_{n_k+1+\ell N}(x)\big|^{s(1-\epsilon)}.
$$

(5) For other $n$, let $1\le \ell\le \ell_k$ be the integer such that $$
n_k+1+(\ell-1)N\le n<n_k+1+\ell N.
$$
Then \begin{align*}
  \mu\big(J_n(x)\big)\le \mu\big(J_{n_k+1+(\ell-1)N}(x)\big)\le 2^{11}\cdot \big|J_{n_k+1+(\ell-1)N}(x)\big|^{s(1-\epsilon)}\le 2^{11}\cdot c^{-1}\cdot \big|J_{n}(x)\big|^{s(1-\epsilon)},
\end{align*} where for the last inequality we have used Lemma \ref{l6.5} for the equivalence of the lengths of the two basic cylinders.

In a summary, we have show that for some absolute constant $c_1$, for any $n\ge 1$ and $x\in E$, \begin{align}\label{g1}
  \mu\big(J_n(x)\big)\le c_1\cdot \big|J_n(x)\big|^{s(1-\epsilon)}.
\end{align}

\subsection{H\"{o}lder exponent of $\mu$  for a general ball}
\

Recall Lemma \ref{l3} about the relation between the gap and the length of the basic cylinders:
$$
G_n(x)\ge \frac{1}{M}\cdot |J_n(x)|.
$$
We consider the measure of a general ball $B(x,r)$ with $x\in E$ and $r$ small.
Let $n$ be the integer such that $$
G_{n+1}(x)\le r<G_{n}(x).
$$ Then the ball $B(x,r)$ can only intersect one basic cylinder of order $n$, i.e. the basic cylinder $J_n(x)$, and so all the basic cylinders of order $n+1$ for which $B(x,r)$ can intersect are all contained in $J_n(x)$.

Let $k$ be the integer such that $$
n_{k-1}+1\le n\le n_{k}.
$$

(1)  {Let} $n_{k-1}+1\le n<n_{k}-1$. By (\ref{ff2}) and (\ref{g1}), it follows that
\begin{align*}
\mu\big(B(x,r)\big)&\le \mu\big(J_n(x)\big)\le c\cdot \mu\big(J_{n+1}(x)\big)\le c\cdot c_1\cdot \big|J_{n+1}(x)\big|^{s(1-\epsilon)}\\&\le c\cdot c_1\cdot M\cdot  \big(G_{n+1}(x)\big)^{s(1-\epsilon)}\le c\cdot c_1\cdot M\cdot  r^{s(1-\epsilon)}.
\end{align*}

(2) {Let} $n=n_{k}-1$. The ball $B(x,r)$ can only intersect the basic cylinder $J_{n_k-1}(x)$ of order $n_k-1$. Now we consider how many basic cylinders of order $n_k$ contained in $J_{n_k-1}(x)$ and with non-empty intersecting with the ball $B(x,r)$.

We write a basic cylinder of order $n_k$ contained in $J_{n_k-1}(x)$ as $$
J_{n_k}(u, a), \ {\text{for some}}\ \alpha_0^{n_k}\le a<2\alpha_0^{n_k}.
$$
It is trivial that for each $a$, the basic cylinder $J_{n_{k}}(u,a)$ is contained in the cylinder $I_{n_k}(u,a)$ and the latter interval is of length $$
\frac{1}{q_{n_k}(q_{n_k}+q_{n_k-1})}\ge \frac{1}{8}\cdot \frac{1}{q_{n_k-1}^2(u)\alpha_0^{2n_k}}.
$$

\begin{itemize}
  \item {Let} $$
  r<\frac{1}{8}\cdot\frac{1}{q_{n_k-1}^2(u)\alpha_0^{2n_k}}.
  $$ Then the ball $B(x,r)$ can intersect at most three cylinders $I_{n_k}(u,a)$ and so three basic cylinders $J_{n_k}(u,a)$. Note that all those basic cylinder are of the same $\mu$-measure, thus \begin{align*}
  \mu\big(B(x,r)\big)&\le 3\mu\big(J_{n_k}(x)\big)\le 3 \cdot c_1\cdot |J_{n_k}(x)|^{s(1-\epsilon)}\\
  &\le 3\cdot c_1\cdot M\cdot G_{n+1}(x)^{s(1-\epsilon)}\le 3\cdot c_1\cdot M\cdot r^{s(1-\epsilon)}.
  \end{align*}

\item {Let} $$
  r\ge \frac{1}{8}\cdot\frac{1}{q_{n_k-1}^2(u)\alpha_0^{2n_k}}.
  $$ The number of cylinders $I_{n_k}(u,a)$ for which the ball $B(x,r)$ can intersect is at most $$
  {16r}\cdot q_{n_k-1}^{2}(u)\alpha_0^{2n_{k}}+2\le 2^5\cdot {r}\cdot q_{n_k-1}^{2}(u)\alpha_0^{2n_{k}},
  $$ so at most this number of basic cylinders of order $n_k$ for which the ball $B(x,r)$ can intersect. Thus \begin{align*}
    \mu\big(B(x,r)\big)&\le \min\Big\{\mu\big(J_{n_k-1}(x)\big),\ \  2^5\cdot {r}\cdot q_{n_k-1}^{2}(u)\alpha_0^{2n_{k}}\cdot \frac{1}{\alpha_0^{n_k}}\cdot \mu\big(J_{n_k-1}(x)\big)\Big\}\\
    &\le c_1\cdot |J_{n_k-1}(x)|^{s(1-\epsilon)}\cdot \min\Big\{1, 2^5\cdot {r}\cdot q_{n_k-1}^{2}(u)\alpha_0^{n_{k}}\Big\}\\
    &\le c_1\cdot \left(\frac{1}{q_{n_k-1}(u)^2 \alpha_0^{n_k}}\right)^{s(1-\epsilon)}\cdot 1^{1-s(1-\epsilon)}\cdot \Big(2^5\cdot {r}\cdot q_{n_k-1}^{2}(u)\alpha_0^{n_{k}}\Big)^{s(1-\epsilon)}\\
    &=c_2 \cdot r^{s(1-\epsilon)}.
  \end{align*}
\end{itemize}

(3) {Let} $n=n_k$. By changing $n_k-1$ and $\alpha_0$ in case (2) to $n_k$ and $\alpha_1$ respectively and then following the same argument as in case (2), we can arrive the same conclusion.

\subsection{Conclusion}\

Thus by applying the mass distribution principle (Proposition \ref{p1}), it yields that $$
\hdim E\ge s(1-\epsilon).
$$ Since $E\subseteq  {\mathcal{E}_{\bold{t}}(B)}$ and $\epsilon, s$ are arbitrary, we conclude that $$
\hdim {\mathcal{E}_{\bold{t}}(B)}\ge s_o.
$$

\section{Completing the proof of Theorem \ref{BHWthm}}

{\sc Upper bound}. For any $\epsilon>0$, one has $$
\Psi(n)\ge (B-\epsilon)^n  \ \ {\text{for all}}\ n\gg 1.
$$  Thus $$
{\mathcal{E}_{\bold{t}}(\Psi)}\subset \Big\{x\in [0,1): a_{n}^{{{t_0}}}(x)a_{n+1}^{{{t_1}}}(x)\ge (B-\epsilon)^n, \ {\text{i.m.}}\ n\in \N\Big\}.
$$ Therefore, $$
\hdim \mathcal{E}_{\bold{t}}(\Psi)\le s_o(B-\epsilon).
$$
Recall Proposition \ref{tb} for the continuity of $s_o=s_o(B)$ with respect to $B$. Then by letting $\epsilon\to 0$, the upper bound {for} $\hdim \mathcal{E}_{\bold{t}}(\Psi)$ follows.

{{\sc Lower bound}. The argument for the lower bound of $\mathcal{E}_{\bold{t}}(\Psi)$ is almost the same as for $\mathcal{E}_{\bold{t}}(B)$ given in last section. So we only give the outline of the proof and mark some minor differences.

Recall the definition of $s_o(B)$ and Lemma \ref{l6.2}. If $\frac{s_o(B)}{{{t_1}}}-\frac{2s_o(B)-1}{{{t_0}}}\le 0$, then by Theorem \ref{WaWu} and Lemma \ref{l6.2}  it follows that $$
\hdim \mathcal{E}_{\bold{t}}(\Psi)\ge \hdim \Big\{x\in [0,1): a_{n+1}^{t_1}(x)
\ge \Psi(n), \ {\text{i.m.}}\ n\in \N\Big\}=s_o(B).
$$
Then we are in the remaining case {when} \begin{equation}\label{1}
\frac{s_o(B)}{{{t_1}}}-\frac{2s_o(B)-1}{{{t_0}}}>0.
\end{equation}}
{At first, choose a real number {$\widetilde{B}>B$ 
close enough to} $B$ such that (\ref{1}) is still true when replacing $B$ by $\widetilde{B}$. Secondly fix integers $M,N$ sufficiently large such that $s:=s^{(2)}_N\big(M,\widetilde{B}\big)$ is in {a small enough} neighborhood of $s_o(\widetilde{B})$ so that $1>s>1/2$, and (\ref{ff5}) holds. At last define a real number $\widetilde{A}$ such that
\begin{equation*}
f_{{{t_0}}}(s)\log \widetilde{A}=f_{{t_0, t_1}}(s)\log \widetilde{B}.
\end{equation*}

By the definition of $B$,
one can choose a sparse {enough} sequence of integers $\{n_k\}_{k\ge 1}$ such that $$
\Psi(n_k)\le \widetilde{B}^{n_k}  \ {\text{for all}}\ k\ge 1.
$$  Thus $$
\mathcal{E}_{\bold{t}}(\Psi)\supset \Big\{x\in [0,1): a_{n_k}^{{{t_0}}}(x)a_{n_k+1}^{{{t_1}}}(x)\ge \widetilde{B}^{n_k}  \ {\text{for all}}\ {k\ge 1}\Big\}.
$$}
{So we are almost in the same situation as {when} proving the lower bound {for} $\hdim \mathcal{E}_{\bold{t}}(B)$.
The only difference,  
{besides the notational differences $(A,B) \mapsto (\widetilde{A}, \widetilde{B})$}, is that the number of the integers in the interval $(n_k+1, n_{k+1})$ may not be a multiple of $N$. }

{Therefore, for all $k\ge 1$, write {(shifting the indices from $n_0+1$ to $0$)}
 $$
(n_{k}-1)-(n_{k-1}+1)=\ell_{k}N+i_{k} \ {\text{for some}}\ 0\le i_{k}<N,
$$ and define a Cantor subset of $\mathcal{E}_{\bold{t}}(\Psi)$ as
\begin{align*}
  \widetilde{E}
=\Bigg\{x\in [0,1): \widetilde{A}^{\frac{n_k}{{{t_0}}}}\le a_{n_k}(x)&<2 {\widetilde{A}^{\frac{n_k}{{{t_0}}}}}, \ \left(\frac{\widetilde{B}^{n_k}}{\widetilde{A}^{n_k}}\right)^{1/{{t_1}}}\le a_{n_k+1}(x)<2\left(\frac{\widetilde{B}^{n_k}}{\widetilde{A}^{n_k}}\right)^{1/{{t_1}}} {\text{for all}} \ k\ge 1;
\\ & a_{n_k+2}(x)=\cdots =a_{n_k+1+i_{k+1}}(x)=2 \ {\text{for all}} \ k\ge 0;
\\ &{\text{and}}\ a_n(x)\in \{1,\dots, M\}  \ {\text{for other $n\in \N$}}\Bigg\}.
\end{align*}

Use the same notation as in Section \ref{s6}: $$
\U=\{w=(\sigma_1,\dots, \sigma_N): 1\le \sigma_i\le M, \ 1\le i\le N\}
$$ and $$
 \alpha_0=\widetilde{A}^{1/{{t_0}}}, \ \  \alpha_1=\left(\frac{\widetilde{B}}{\widetilde{A}}\right)^{1/{{t_1}}},
  $$
  and {define} $J_n(x)$ 
  in the same way.

A generic
element $x\in \widetilde{E}$ can be written as \begin{align*}
x=\big[\eta^{(1)}, w_1^{(1)},\dots, w_{\ell_1}^{(1)}, a_{n_1}, a_{n_1+1},\ & \eta^{(2)}, w_1^{(2)},\dots, w_{\ell_2}^{(2)}, a_{n_2},a_{n_2+1},\\
 \dots, \ & \eta^{(k)}, w_1^{(k)},\dots, w_{\ell_k}^{(k)}, a_{n_k},a_{n_k+1},\dots\big],
\end{align*} where $\eta^{(k)}=(\underbrace{2, \dots, 2}_{i_k})$, $w\in \U$, and $$
\alpha_0^{n_k}\le a_{n_k}< 2\alpha_0^{n_k}, \ \alpha_1^{n_k}\le a_{n_k+1}< 2\alpha_1^{n_k}\ \ {\text{for all}}\ k\ge 1.
$$

Recall {that} $s=s^{(2)}_N(M, \widetilde{B})$. We define the measure {of} the basic intervals $J_n(x)$ containing $x$ as follows. Note that for all $x\in \widetilde{E}$ their partial quotients $a_n(x)$ have only one choice for all $$
n_k+1<n\le n_k+1+i_{k+1}, \ {\text{with}}\ k\ge 0.
$$ So when defining a mass distribution $\mu$ on $\widetilde{E}$, one must have that$$
\mu\big(J_n(x)\big)=\mu\big(J_{n_k+1}(x)\big), \ {\text{for all}}\ n_k+1<n\le n_{k}+1+i_{k+1}.
$$ Except such a restriction, we define the measure on $\widetilde{E}$ in the way as did in Section \ref{s6}:
%
%
%
%

Let $n_{k}+1<n\le n_{k+1}+1$. Assume the measure of all basic intervals of order $n_{k}+1$ has been defined. \begin{itemize}
  \item for each $n_k+1<n\le n_k+1+i_{k+1}$, define $$
  \mu\big(J_n(x)\big)=\mu\big(J_{n_k+1}(x)\big),
  $$
  \item for each $1\le \ell\le \ell_{k+1}$, define \begin{align*}
  \mu\big(J_{n_{k}+1+i_{k+1}+N\ell}(x)\big)&=\prod_{i=1}^{\ell}\frac{1}{q_N(w_i^{(k+1)})^{2s}\cdot \alpha_1^{sN}}\cdot \mu\big(J_{n_{k}+1+i_{k+1}}(x)\big).
  \end{align*}

  \item for each integer $n$ with $n_k+1+i_{k+1}+(\ell-1)N<n<n_k+1+i_{k+1}+\ell N$ for some $1\le \ell\le \ell_{k+1}$, define
  $$
  \mu\big(J_n(x)\big)=\sum_{J_{n_k+1+i_{k+1}+\ell N}\subset J_n(x)}\mu(J_{n_k+1+i_{k+1}+\ell N}).
  $$
  \item for each $0\le i\le 1$, define \begin{align*}
  \mu\big(J_{n_{k+1}+i}(x)\big)&=\prod_{j=0}^i\frac{1}{\alpha_j^{n_{k+1}}}\cdot \mu\big(J_{n_{k+1}}(x)\big).
  \end{align*}
%
\end{itemize}

Then we will use the mass distribution principle (Proposition \ref{p1}) to reach our conclusion that
$$\hdim \mathcal{E}_{\bold{t}}(\Psi)\ge s_o(B).$$
So the {remaining} task is to compare the $\mu$-measure of a ball $B(x,r)$ with $r$. The gap estimation (Lemma \ref{l3}) is still true without any change and the estimation on $q_n(x)$ (Lemma \ref{l6.4}) is similar just by adding some terms of the power of 2.  
Then the remaining argument can 
{proceed as} 
in Section \ref{s6} with some obvious modifications. 
We omit the details.}


{\color{blue}}

\section{Final Comments}\label{final}
One might wonder to extend Theorem \ref{BHWthm} to all $m\geq 2$. Our methods for the upper bound calculations extend easily to any $m$, but the major difficulty lies in establishing the lower bound and proving that it is equal to the upper bound estimate. To be precise, it is possible to prove the following formula:

\begin{thm}\label{BHWthm2}  
Let $\Psi:\N\to\R_{\ge 1}$ 
{be such that $1<B< \infty$. Then}
\begin{equation*}
\begin{array}{ll} \hdim {\D_{\mathbf t}}(\Psi ) & \leq \inf \{s\ge 0: P(T, -s\log |T'|-f_{{{t_0,\dots, t_{m-1}}}}(s)\log B)\le 0\}  \ \mathrm{for \ all} \ m,   \ but \\ [3ex] \hdim {\D_{\mathbf t}}(\Psi )& {\ge} \inf \{s\ge 0: P(T, -s\log |T'|-f_{{t_0,t_1}}(s)\log B)\le 0\},
\end{array}
\end{equation*}

where $f_{{{t_0,\dots, t_{m-1}}}}(s)$ is given by the following iterative {procedure} with the starting value as ${f_{t_0}(s)=\frac{s}{t_0}},$ and {\begin{align*}
f_{t_0,\dots,t_{\ell}}(s)&=\frac{sf_{t_0,\dots, t_{\ell-1}}(s)}{t_{\ell}f_{t_0,\dots,t_{\ell-1}}(s)+\max\left\{0, s-(2s-1)\frac{t_{\ell}}{t_i}, 0\le i\le \ell-1\right\}}\\
&=\frac{s{f_{t_0,\dots, t_{\ell-1}}(s)}}{t_{\ell}{f_{t_0,\dots,t_{\ell-1}}}(s)+\max\left\{0, s-(2s-1)\frac{t_{\ell}}{\max_{0\le i\le \ell-1}t_i}\right\}}.
\end{align*}}

\end{thm}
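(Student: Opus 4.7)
The proof splits into two independent parts: a lower bound, which is immediate from Theorem~\ref{BHWthm}, and an upper bound, which extends the $m=2$ calculation of Section~5 by induction on $m$.

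For the \emph{lower bound}, note that each partial quotient satisfies $a_{n+i}(x) \geq 1$, so the two-factor inequality $a_n^{t_0}(x)a_{n+1}^{t_1}(x) \geq \Psi(n)$ automatically forces $\prod_{i=0}^{m-1} a_{n+i}^{t_i}(x) \geq \Psi(n)$. Hence ${\D_{(t_0,t_1)}}(\Psi) \subseteq {\D_{\mathbf t}}(\Psi)$, and applying Theorem~\ref{BHWthm} to the smaller set yields the claimed bound. This inclusion discards all the information carried by $t_2,\dots,t_{m-1}$, which is the structural reason the approach loses optimality for $m\geq 3$.

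For the \emph{upper bound}, the plan is to induct on $m$. The base case $m=1$ is Theorem~\ref{WaWu} applied to $\{a_n(x) \geq \Psi(n)^{1/t_0}\}$, producing $f_{t_0}(s) = s/t_0$. For the inductive step, fix a parameter $A \in (1,B)$ and decompose
\[
{\D_{\mathbf t}}(B) \;\subseteq\; \D''(A) \cup \D'(A),
\]
where $\D''(A) := \{x : \prod_{i=0}^{m-2} a_{n+i}^{t_i}(x) \geq A^n \text{ for i.m.\ } n\}$ is covered by the inductive hypothesis, giving $\hdim\D''(A) \leq \delta_1 := \inf\{s \geq 0 : P(T, -s\log|T'| - f_{t_0,\dots,t_{m-2}}(s)\log A) \leq 0\}$, while $\D'(A)$ consists of $x$ satisfying $\prod_{i=0}^{m-2} a_{n+i}^{t_i} < A^n$ yet $a_{n+m-1}^{t_{m-1}} \geq B^n/\prod_{i=0}^{m-2} a_{n+i}^{t_i}$, for infinitely many $n$. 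Cover $\D'(A)$ by the natural intervals
\[
J_{n+m-2}(a_1,\dots,a_{n+m-2}) := \bigcup_{a_{n+m-1}^{t_{m-1}} \geq B^n/\prod a_{n+i}^{t_i}} I_{n+m-1}(a_1,\dots,a_{n+m-1}),
\]
whose lengths satisfy $|J| \asymp [q_{n+m-2}^2 (B^n/\prod_{i=0}^{m-2} a_{n+i}^{t_i})^{1/t_{m-1}}]^{-1}$. Using $q_{n+m-2} \asymp q_{n-1}\prod_{i=0}^{m-2}a_{n+i}$ (up to factors depending only on $m$), the $s$-Hausdorff sum reduces to controlling
\[
\sum_{\prod_{i=0}^{m-2} a_{n+i}^{t_i} \leq A^n}\;\prod_{i=0}^{m-2} a_{n+i}^{s(t_i/t_{m-1}-2)}.
\]
After substituting $u_i = t_i\log a_{n+i}$, this becomes a sum over the simplex $\{u_i \geq 0,\ \sum u_i \leq n\log A\}$ of exponentials with rates $\beta_i = s/t_{m-1} - (2s-1)/t_i$. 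A Lagrange-multiplier argument (the linear form $\sum \beta_i u_i$ is maximized at a vertex of the simplex) gives exponential growth rate $\max\{0, \max_i \beta_i\} = \max\{0, s/t_{m-1} - (2s-1)/T_{m-2}\}$ where $T_{m-2} := \max_{0 \leq i \leq m-2} t_i$. The resulting pressure threshold $\delta_2$, when equated with $\delta_1$, fixes the optimal $\log A = \frac{s/t_{m-1}}{f_{t_0,\dots,t_{m-2}}(s) + \max\{0, s/t_{m-1} - (2s-1)/T_{m-2}\}}\log B$; substituting back into $f_{t_0,\dots,t_{m-1}}(s)\log B = f_{t_0,\dots,t_{m-2}}(s)\log A$ reproduces exactly the iterative formula in the theorem, and an analogue of Lemma~\ref{lemrec} verifies $A<B$.

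The \emph{main obstacle} is justifying the multi-dimensional summation estimate rigorously and showing that the answer depends only on the single quantity $T_{m-2}$ rather than on the finer structure of $(t_0,\dots,t_{m-2})$. I would handle this by an auxiliary induction peeling off one coordinate at a time, in the spirit of the proof of Lemma~\ref{LebesgueAn}, together with a case split on the signs of the exponents $s(t_i/t_{m-1}-2)+1$ (equivalently, of the $\beta_i$). Once this summation is controlled, the remaining steps—assembling the pressure functions, optimizing over $A$, and passing from ${\D_{\mathbf t}}(B)$ to general ${\D_{\mathbf t}}(\Psi)$ via continuity of the dimension function as in Section~7—follow by direct analogy with Sections~5 and~7.
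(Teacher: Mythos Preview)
Your proposal is correct and matches the paper's intended argument: the paper itself does not give a detailed proof of Theorem~\ref{BHWthm2} but simply remarks in \S\ref{final} that the upper-bound methods of Section~5 ``extend easily to any $m$'', and your inductive extension (splitting off the last factor $a_{n+m-1}$, handling the $(m-1)$-factor remainder via the inductive hypothesis, and estimating the constrained multi-sum over the simplex $\{\prod_{i=0}^{m-2}a_{n+i}^{t_i}\le A^n\}$) is precisely that extension, while the lower bound via the inclusion $\D_{(t_0,t_1)}(\Psi)\subset\D_{\mathbf t}(\Psi)$ together with Theorem~\ref{BHWthm} is immediate. The only point needing slightly more care than you indicate is the analogue of Lemma~\ref{lemrec}: verifying $A<B$ for general $m$ amounts to the auxiliary inductive inequality $f_{t_0,\dots,t_\ell}(s)>(2s-1)/\max_{0\le i\le\ell}t_i$ for $1/2<s<1$, which is straightforward but not a one-line repeat of the $m=2$ calculation.
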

We believe that, for $1<B<\infty,$  $$\hdim {\D_{\mathbf t}}(\Psi ) \geq \inf \{s\ge 0: P(T, -s\log |T'|-f_{{t_0,\dots, t_{m-1}}}(s)\log B)\le 0\}$$
should hold. From the definition of the functions $f_{{t_0,\dots, t_{\ell-1}}}$, the 
{appearance of the expression \linebreak $\max_{0\le i\le \ell-1}t_i$} in it means that {the partial quotients corresponding  to some exponents} $t_i$ 
will not contribute to the dimension. Thus the major difficulty is to figure out which partial quotients contribute essentially to the dimension and which are not.

\providecommand{\bysame}{\leavevmode\hbox to3em{\hrulefill}\thinspace}
\providecommand{\MR}{\relax\ifhmode\unskip\space\fi MR }
\providecommand{\MRhref}[2]{%
  \href{http://www.ams.org/mathscinet-getitem?mr=#1}{#2}
}
\providecommand{\href}[2]{#2}

\end{document}